\numberwithin{equation}{section} 
\makeatletter \@addtoreset{equation}{section}
\makeatletter \@addtoreset{lemma}{section}
\makeatletter \@addtoreset{theorem}{section}
\makeatletter \@addtoreset{proposition}{section}
\makeatletter \@addtoreset{corollary}{section}
\makeatletter \@addtoreset{remark}{section}
\makeatletter \@addtoreset{definition}{section}
\makeatletter \@addtoreset{example}{section}
\begin{document}

\thispagestyle{firstpg}

\vspace*{1.5pc} \noindent \normalsize\textbf{\Large {$n$-type Markov
Branching Processes with Immigration }} \hfill \vspace{0.5cm}

\hspace*{0.75pc}{\small\textrm{\uppercase{Li Junping
}}}\hspace{-2pt}$^{*}$ {\small\textit{Central South University }}

\hspace*{0.75pc}{\small\textrm{\uppercase{Wang Juan
}}}\hspace{-2pt}$^{**}$ {\small\textit{Central South University }}

\hspace*{0.75pc}{\small\textrm{\uppercase{Zang Yanchao
}}}\hspace{-2pt}$^{***}$ {\small\textit{Central South University }}


\par
\footnote{\hspace*{-0.75pc}$^{*}\,$Postal address:
 School of Mathematics and Statistics, Central
South University, Changsha, 410075, China. E-mail address:\
jpli@mail.csu.edu.cn }

\par
\footnote{\hspace*{-0.75pc}$^{**}\,$Postal address:
 School of Mathematics and Statistics, Central
South University, Changsha, 410075, China. E-mail address:\
wangjuanrose@126.com}

\par
\footnote{\hspace*{-0.75pc}$^{***}\,$Postal address:
 School of Mathematics and Statistics, Central
South University, Changsha, 410075, China. E-mail address:\
zycmail@126.com }

\par

\renewenvironment{abstract}{%
\vspace{2pt} \hspace*{2.25pc}
\begin{minipage}{14cm}
\footnotesize
{\bf Abstract}\\[1ex]
\hspace*{0.5pc}} {\end{minipage}}
\begin{abstract}
In this paper, we consider $n$-type Markov branching processes with
immigration and resurrection. The uniqueness criteria are first
established. Then, a new method is found and the explicit expression
of extinction probability is successfully obtained in the absorption
case, the mean extinction time is also given. The recurrence and
ergodicity criteria are given if the state ${\bf 0}$ is not
absorptive. Finally, if the resurrection rates are same as the
immigration rates, the branching property and decay property are
discussed in detail, it is shown that the process is a
superimposition of a $n$-type branching process and an immigration.
The exact value of the decay parameter $\lambda_Z$ is given for the
irreducible class ${\bf Z}_+^n$. Moreover, the corresponding
$\lambda_Z$-invariant measures/vectors and quasi-distributions are
presented.
\end{abstract}
\par
\vspace*{12pt} \hspace*{2.25pc}
\parbox[b]{26.75pc}{{
}} {\footnotesize {\bf Keywords:} $n$-type Markov branching process,
 immigration, recurrence, \\
 \hspace*{8.25pc}
   branching property, decay parameter, invariant measures/vectors
}
\par
\normalsize

\renewcommand{\amsprimary}[1]{
     \vspace*{8pt}
     \hspace*{2.25pc}
     \parbox[b]{24.75pc}{\scriptsize
    AMS 2000 Subject Classification: Primary 60J27 Secondary 60J35
     {\uppercase{#1}}}\par\normalsize}
\renewcommand{\ams}[2]{
     \vspace*{8pt}
     \hspace*{2.25pc}
     \parbox[b]{24.75pc}{\scriptsize
     AMS 2000 SUBJECT CLASSIFICATION: PRIMARY
     {\uppercase{#1}}\\ \phantom{
     AMS 2000
     SUBJECT CLASSIFICATION:
     }
    SECONDARY
 {\uppercase{#2}}}\par\normalsize}

\ams{60J27}{60J35}

\par
\vspace{5mm}
 \setcounter{section}{1}
 \setcounter{equation}{0}
 \setcounter{theorem}{0}
 \setcounter{lemma}{0}
 \setcounter{corollary}{0}
\noindent {\large \bf 1. Introduction}
\vspace{3mm}
\par
 Markov branching processes occupy a major niche in the
   theory and applications of probability theory. Good references
   are, among many others, Harris~\cite{1963-Harris-p}, Athreya and Ney~\cite{1972-Athreya and Ney-p} and
   Asmussen and Hering~\cite{Asm83}, Athreya and Jagers~\cite{Ath96}. Within this framework both
   state-independent and state-dependent immigration have
   important roles to play. For the former, Sevast'yanov~\cite{Seva57} and Vatutin~\cite{Vat74}-\cite{Vat77}
   considered a branching process with state-independent
   immigration. Aksland~\cite{Ask75}
   considered a modified birth-death process where the
   state-independent immigration is imposed on a simple
   birth-death underlying structure.
   On the other hand, the latter (state-dependent
   immigration) can be traced to Foster~\cite{Fost71} and Pakes~\cite{PAK71} who
   considered a discrete branching process with immigration
   occurring only when the process occupies state $0$.
   Yamazato~\cite{Yama75} investigated the continuous-time version,
   See also the discussion in Pakes and Tavar\'{e}~\cite{PT81}.
\par
The decay parameter and the quasi-stationary distributions are
closely linked with the development of continuous time Markov
chains. The idea of using quasi-stationary distribution can be
traced back at least to the early work of
Yaglom~\cite{1947-Yaglom-p795-798}, who considered the long-run
behavior, in a sense of the subcritical Galton-Watson process. The
decay parameter was developed by Kingman in early 1960's. Beginning
with the pioneering and remarkable work of
Kingman~\cite{1963-Kingman-p337-358} and
Vere-Jones~\cite{1962-Vere-Jones-p7-28}, this extremely useful
theory has been flourished owing to many important researches,
including the significant contributions made by
Flaspohler~\cite{1974-Flas-p351-356}, Pakes~\cite{PT81},
Pollett~\cite{1988-Pollett-p600-621}--\cite{1999-Pollett-p268-272},
Darroch and Seneta~\cite{1967-Darroch and Seneta-p192-196},
Kelly~\cite{1983-Kelly-p143-160},
Kijima~\cite{1963-Kijima-p509-517}, Nair and Pollett~\cite{1993-Nair
and Pollett-p82-102}, Tweedie~\cite{1974-Tweedie-p485-493}, Van
Doorn~\cite{1985-Van Doorn-p514-530} and many others.
\par
$n$-type Markov branching process has been discussed in
Harris~\cite{1963-Harris-p}, Athreya and Ney~\cite{1972-Athreya and
Ney-p}. The aim of this paper is to consider the $n$-type branching
processes with immigration and resurrection, which is the further
extension of the $n$-type Markov branching process. We will discuss
the extinction behavior, recurrence property and decay property. The
evolution of a $n$-type branching process with immigration can be
intuitively described as follows:
\par
(i)\ Consider a system involving $n$ types of particles. The life
length of a type $i$ particle is exponentially distributed with mean
$\theta_{i},i=1,\cdots,n$.
\par
(ii)\ Particles give ``offspring" independently. When a type $i$
particle splits(dies), it produces $j_1$ particles of type 1,
$\cdots$, $j_n$ particles of type $n$, with probability
$p_{j_1,\cdots,j_n}^{(i)}$.
\par
(iii)\ When the system is empty, the immigration still occurs.
\par
(iv)\ If particles are migrant from the external environment, then
they will follow the same reproductive rules as the original
particles in the system .
\par
We begin our research by giving the formal definition of $n$-type
branching process with immigration. Throughout this paper, we adopt
the following conventions:
\par
(C-1)\ ${\bf Z}^n_+=\{(i_1,\cdots,i_n): i_1,\cdots,i_n\in {\bf
Z}_+\}$, for any ${\bf i}=(i_1,\cdots,i_n)\in {\bf Z}^n_+$, denote
$|{\bf i}|=\sum_{k=1}^n i_k$.
\par
(C-2)\ $[0,1]^n=\{(u_1,\cdots,u_n):0\leq u_1,\cdots,u_n\leq 1\}$.
\par
(C-3)\ $\chi_{_{{\bf Z}_+^n}}(\cdot)$ is the indicator of ${\bf
Z}_+^n$.
\par
(C-4)\ ${\bf 0}=(0,\cdots,0)$, ${\bf 1}=(1,\cdots,1)$, ${\bf
e}_i=(0,\cdots,1_i,\cdots,0)$ are vectors in $[0,1]^n$.

\begin{definition}
\label{def1.1}
   A $q$-matrix $Q=(q_{{\bf i j}};{\bf i},{\bf j}\in {\bf Z}^n_+)$ is called an $n$-type
   branching with immigration
   $q$-matrix (henceforth referred to as a $n$TBI $q$-matrix) if it takes the following form:
$$
q_{{\bf i j}}
   =\begin{cases}
   h_{\bf j}\cdot\chi_{_{{\bf Z}_+^n}}({\bf j}),
    &  \mbox{if \ $|{\bf i}|=0$}\\
   \sum_{k=1}^ni_kb^{(k)}_{{\bf j}-{\bf i}+{\bf e}_k}\cdot \chi_{_{{\bf Z}_+^n}}({\bf j}-{\bf i}+{\bf e}_k)
   +a_{\bf j-i}\cdot\chi_{_{{\bf Z}_+^n}}({\bf j}-{\bf i}),
    &  \mbox{if \ $|{\bf i}|>0$}\\
     0,              & \mbox{otherwise}
    \end{cases}
  \eqno(1.1)
$$
 where
$$
\begin{cases}
 h_{\bf j}\geq 0({\bf j}\neq {\bf 0}), 0<\sum_{{\bf j}\neq {\bf 0}}h_{\bf
 j}=-h_{\bf 0}<\infty; \\
 a_{\bf j}\geq 0({\bf j}\neq {\bf 0}),0<\sum_{{\bf j}\neq {\bf 0}}a_{\bf
 j}=-a_{\bf 0}<\infty \\
 b^{(k)}_{{\bf j}}\geq 0 \ ({\bf j}\neq {\bf e}_k),\  0<\sum_{{\bf j}\neq {\bf e}_k}b^{(k)}_{\bf
 j}=
 -b^{(k)}_{{\bf e}_k}<\infty, \ \ k=1,\cdots,n.
 \end{cases}\eqno(1.2)
$$
\end{definition}
\begin{remark}
 $\{h_{\bf j};{\bf j}\neq {\bf 0}\}$ denotes the ``resurrection rate",
 $\{a_{\bf j};{\bf j}\neq {\bf 0}\}$ denotes the
``immigration rate" whilst $\{b_{\bf j};{\bf j}\neq {\bf e}_k\}$
denotes the ``branching rate".
\end{remark}
\begin{definition}
\label{def1.2}  An $n$-type branching process with
immigration(henceforth referred to simply as a $n$TBIP) is a
continuous-time Markov chain with state space ${\bf Z}^n_+$, whose
transition function $P(t)=(p_{{\bf ij}}(t);$ ${\bf i,j}\in {\bf
Z}^n_+)$ satisfies Kolmogorov forward equation
$$
 P'(t)=P(t)Q  \eqno(1.3)
$$
where $Q$ is a $n$TBI $q$-matrix as given in $(1.1)-(1.2)$.
\end{definition}
\par
Here we have defined the $Q$-process as the corresponding transition
$P(t)$ rather than the process itself. In fact, for convenience, we
shall freely use this term to denote either of them in this paper.
This is, of course, commonly accepted and will not cause any
confusion.
\par
By Kingman~\cite{1963-Kingman-p337-358}, we know that there exists a
number $\lambda_C\geq 0$, called the decay parameter of the process
$P(t)$, such that for all ${\bf i,j}\in C$ (where $C$ is a
irreducible class),
\begin{eqnarray*}
  \frac{1}{t}\log p_{{\bf ij}}(t)\rightarrow -{\lambda}_C\ \ \mbox{as}\
  t\rightarrow +\infty.
\end{eqnarray*}
On the other hand, let
\begin{eqnarray*}
  {\mu}_{{\bf ij}}=\inf \{\lambda \geq 0: \int_0^{\infty}e^{\lambda
  t}p_{{\bf ij}}(t)dt=\infty\}=\sup\{\lambda \geq 0: \int_0^{\infty}e^{\lambda
  t}p_{{\bf ij}}(t)dt<\infty\}.
\end{eqnarray*}
By the irreducibility argument, it is fairly easy to show that
${\mu}_{{\bf ij}}$ does not depend on ${\bf i,j}\in C$. Denote the
common value of ${\mu}_{{\bf ij}}$ by $\mu$. It is straightforward
to show that the common abscissa of convergence of these integrals
is just the decay parameter, i.e., $ {\lambda}_C=\mu$.
\par
It is well known that the decay parameter and quasi-stationary
distributions are closely linked with the so-called
$\mu$-subinvariant/invariant measures and
$\mu$-subinvariant/invariant vectors. An elementary but detailed
discussion of this theory can be seen in
Anderson~\cite{-Anderson-p}. For convenience, we briefly repeat
these definitions, tailored for our special models, as follows:
\par
\begin{definition}
Let $Q=(q_{{\bf ij}};{\bf i,j}\in {\bf Z}_+^n)$ be an $n${\rm{TBI}}
$q$-matrix and $C$ be a irreducible class. Assume that $\mu\geq 0$.
A set $(m_{\bf i};{\bf i}\in C)$ of strictly positive numbers is
called a $\mu$-subinvariant measure for $Q$ on $C$ if
$$
 \sum_{{\bf i}\in C}m_{\bf i}q_{{\bf ij}}\leq -\mu m_{\bf
j},\ \ \ \ {\bf j}\in C. \eqno(1.4)
$$
If the equality holds in $(1.4)$, then $(m_{\bf i};{\bf i}\in C)$ is
called a $\mu$-invariant measure for $Q$ on $C$.
\end{definition}
\par
\begin{definition}
Let $P(t)=(p_{{\bf ij}}(t);{\bf i,j}\in {\bf Z}_+^n)$ be an
$n${\rm{TBIP}} and $C$ be a irreducible class. Assume that $\mu\geq
0$. A set $(m_{\bf i};{\bf i}\in C)$ of strictly positive numbers is
called a $\mu$-subinvariant measure for $P(t)$ on $C$ if
$$
  \sum_{{\bf i}\in C}m_{\bf i}p_{{\bf ij}}(t)\leq e^{-\mu t} m_{\bf j},\ \ \ \ {\bf j}\in C.
\eqno(1.5)
$$
If the equality holds in $(1.5)$, then $(m_{\bf i};{\bf i}\in C)$ is
called a $\mu$-invariant measure for $P(t)$ on $C$.
\end{definition}
\par
 The subinvariant/invariant vectors can be similarly defined.
\par
\begin{definition}
Let $P(t)=(p_{{\bf ij}}(t);{\bf i,j}\in {\bf Z}_+^n)$ be an
$n${\rm{TBIP}} and $C$ be a communicating class. Assume that
$(m_{\bf i};{\bf i}\in C)$ is a probability distribution over $C$.
Let $p_{\bf j}(t)=\sum_{{\bf i}\in C}m_{\bf i}p_{{\bf ij}}(t)$, for
${\bf j}\in C, t\geq 0$. If
$$
  \frac{p_{\bf j}(t)}{\sum_{{\bf i}\in C}p_{{\bf i}}(t)}= m_{\bf j},\ \ \ \ {\bf j}\in C, t>0,
\eqno(1.6)
$$
then $(m_{\bf i};{\bf i}\in C)$ is called a quasi-stationary
distribution.
\end{definition}
\par
The deep relationship between invariant measures and
quasi-stationary distributions has been revealed by the important
work of Van Doorn~\cite{1985-Van Doorn-p514-530}, and Nair and
Pollett~\cite{1993-Nair and Pollett-p82-102}.
\par
For the one-dimensional Markov branching processes with immigration,
the extinction probability and exact value of decay parameter are
well-known. The basic aim of this paper is to investigate the
extinction behavior, recurrence property and decay property of
$n$-type Markov branching processes with immigration. Different from
the one-dimensional cases, when a particle of one type in the system
splits, the number of particles of different type may change.
Therefore, the method used in the one-dimensional case fails and
some new approaches should be used in the current situation. In this
paper, we find a new method (see, Theorem~\ref{th3.1}) to
investigate the deep properties of the $n$-type Markov branching
processes with immigration. Furthermore, this new method can be
available in discussing related models and also be available in
solving some kind of partial differential equations.
\par
The structure of this paper is organized as follows. Regularity and
uniqueness criteria together with some preliminary results are
firstly establish in Section 2. In Section 3, we are concentrated on
discussing the absorptive $n$TBIP(i.e., without resurrection) for
which the most interesting problem is the extinction probability. In
section 4, we mainly consider the case $h_{\bf 0}\neq 0$ and the
recurrence criteria are given. In the following Section 5 and
Section 6, we discuss the branching property and decay properties.
Note that if $h_{\bf j}=a_{\bf j}$, then the branching property and
the decay properties of the corresponding process will be
well-discussed and understood. For this reason, we shall assume that
$h_{\bf j}=a_{\bf j}$ in Section 5 and Section 6.

\par
\vspace{5mm}
 \setcounter{section}{2}
 \setcounter{definition}{0}
 \setcounter{equation}{0}
 \setcounter{theorem}{0}
 \setcounter{lemma}{0}
 \setcounter{corollary}{0}
\noindent {\large \bf 2. Preliminary and uniqueness}
 \vspace{3mm}
\par
Since the $q$-matrix $Q$ is determined by the sequences $\{h_{\bf
j};{\bf j}\in {\bf Z}^n_+\}$, $\{a_{\bf j};{\bf j}\in {\bf Z}^n_+\}$
and $\{b^{(i)}_{\bf j}; {\bf j}\in {\bf Z}^n_+\}\ (i=1,\cdots,n)$,
we define their generating functions as
\begin{eqnarray*}
\ \ & &
H(u_1,\cdots,u_n)=\sum_{j_1=0}^{\infty}\cdots\sum_{j_n=0}^{\infty}h_{j_1,\cdots,j_n}u_1^{j_1}\cdots
u_n^{j_n}; \\
\ \ & &
A(u_1,\cdots,u_n)=\sum_{j_1=0}^{\infty}\cdots\sum_{j_n=0}^{\infty}a_{j_1,\cdots,j_n}u_1^{j_1}\cdots
u_n^{j_n}; \\
\ \ & &
B_i(u_1,\cdots,u_n)=\sum_{j_1=0}^{\infty}\cdots\sum_{j_n=0}^{\infty}b^{(i)}_{j_1,\cdots,j_n}u_1^{j_1}\cdots
u_n^{j_n},\ \ \ i=1,\cdots,n.
\end{eqnarray*}
For the sake of convenience in writing, here we write
$\{h_{(j_1,\cdots,j_n)};(j_1,\cdots,j_n)\in{\bf Z}_+^n\}$ as
$\{h_{j_1,\cdots,j_n};(j_1,\cdots,j_n)\in{\bf Z}_+^n\}$,
$\{a_{(j_1,\cdots,j_n)};(j_1,\cdots,j_n)\in{\bf Z}_+^n\}$ as
$\{a_{j_1,\cdots,j_n};(j_1,\cdots,j_n)\in{\bf Z}_+^n\}$ and
$\{b^{(i)}_{(j_1,\cdots,j_n)};(j_1,\cdots,j_n)\in{\bf Z}_+^n\}$ as
$\{b^{(i)}_{j_1,\cdots,j_n};(j_1,\cdots,j_n)\in{\bf Z}_+^n\}$,
 these would
not cause any confusion. It is clear that $A(u_1,\cdots,u_n)$ and
each $B_i(u_1,\cdots,u_n)$ are well defined at least on $[0,1]^n$.
\par
In order to discuss the $n$-type Markov branching processes with
immigration, we need some preparations. In this section, we first
investigate the properties of the generating functions
$H(u_1,\cdots,u_n)$, $A(u_1,\cdots,u_n)$ and
$\{B_i(u_1,\cdots,u_n);i=1,\cdots,n\}$. Let
\begin{eqnarray}
\ \ & & H_{j}(u_1,\cdots,u_n)=\frac{\partial
H(u_1,\cdots,u_n)}{\partial u_j},\ \ j=1,\cdots,n.\nonumber\\
\ \ & & A_{j}(u_1,\cdots,u_n)=\frac{\partial
A(u_1,\cdots,u_n)}{\partial u_j},\ \ j=1,\cdots,n.\nonumber\\
\ \ & & B_{ij}(u_1,\cdots,u_n)=\frac{\partial
B_i(u_1,\cdots,u_n)}{\partial u_j},\ \ i,j=1,\cdots,n.\nonumber\\
\ \ & & g_{ij}(u_1,\cdots,u_n)=\delta_{ij}-
\frac{B_{ij}(u_1,\cdots,u_n)}{b^{(i)}_{{\bf e}_i}},\ \
i,j=1,\cdots,n\nonumber.
\end{eqnarray}
where  $u_1,\cdots,u_n\in [0,1]$ and $\delta_{ij}$ is the Dirac
function. The matrices $(B_{ij}(u_1,\cdots,u_n))$ and
$(g_{ij}(u_1,\cdots,u_n))$ are denoted by $B(u_1,\cdots,u_n)$ and
$G(u_1,\cdots,u_n)$, respectively.
\par
\begin{definition}
Generating functions $\{B_i(u_1,\cdots,u_n); 1\leq i\leq n\}$ is
called singular if there exists an $n\times n$ matrix $M$ such that
$$
(B_1(u_1,\cdots,u_n),\cdots,B_n(u_1,\cdots,u_n))^{'}=M\cdot
(u_1,\cdots,u_n)'
$$
 where $(x_1,\cdots,x_n)'$ denotes the transpose
of the vector $(x_1,\cdots,x_n)$.
\end{definition}
\par
\begin{definition}
 A nonnegative $n\times n$ matrix $A=(a_{ij})$ is called positively
 regular if there exists $N>0$, such that $A^N>0$.
\end{definition}
\par
If $\{B_i(u_1,\cdots,u_n); 1\leq i\leq n\}$ is singular, then each
particle has exactly one offspring, and hence the branching process
will be equivalent to an ordinary finite Markov chain. In order to
avoid discussing such trivial cases, we shall assume throughout this
paper that the following conditions are satisfied:
\par
({\bf A}-1).\ $\{B_i(u_1,\cdots,u_n); 1\leq i\leq n\}$ is
nonsingular;
\par
({\bf A}-2).\ $B_{ij}(1,\cdots,1)<+\infty,\ i,j=1,\cdots,n$;
\par
({\bf A}-3).\ $G(1,\cdots,1)$ is positively regular.

\par
\begin{lemma}
\label{le 2.1}
 $A(u_1,\cdots,u_n)<0$ for all $u_1,\cdots,u_n\in[0,1)$ and
$\lim_{u_1\uparrow 1,\cdots,u_n\uparrow
1}A(u_1,\cdots,u_n)=A(1,\cdots,1)= 0$. Similar property holds for
$H(u_1,\cdots,u_n)$.
\end{lemma}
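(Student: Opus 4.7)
The plan is to work directly from the definition of $A$ and the constraints on the coefficients $\{a_{\bf j}\}$, splitting off the constant term and using the fact that every other term is a monomial in $u_1,\dots,u_n$ with strictly positive total degree.

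First, I would write
\[
A(u_1,\dots,u_n) = a_{\bf 0} + \sum_{{\bf j}\neq {\bf 0}} a_{\bf j}\, u_1^{j_1}\cdots u_n^{j_n},
\]
and recall from (1.2) that $a_{\bf j}\geq 0$ for ${\bf j}\neq {\bf 0}$ and $\sum_{{\bf j}\neq {\bf 0}} a_{\bf j} = -a_{\bf 0}$, which is finite and strictly positive. Evaluating at $u_1=\cdots=u_n=1$ immediately gives $A(1,\dots,1) = a_{\bf 0} + (-a_{\bf 0}) = 0$.

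Next, for the strict inequality on $[0,1)^n$, I fix $u_1,\dots,u_n \in [0,1)$. For every ${\bf j}\neq {\bf 0}$ at least one $j_k \geq 1$, and since $u_k < 1$ for every $k$, the corresponding monomial satisfies $u_1^{j_1}\cdots u_n^{j_n} < 1$. Because $\sum_{{\bf j}\neq {\bf 0}} a_{\bf j} > 0$, there exists at least one index ${\bf j}^\ast\neq {\bf 0}$ with $a_{{\bf j}^\ast}>0$, and for that term the inequality is strict. Summing (with the other terms satisfying the non-strict bound $a_{\bf j} u_1^{j_1}\cdots u_n^{j_n}\leq a_{\bf j}$) yields
\[
\sum_{{\bf j}\neq {\bf 0}} a_{\bf j}\, u_1^{j_1}\cdots u_n^{j_n} < \sum_{{\bf j}\neq {\bf 0}} a_{\bf j} = -a_{\bf 0},
\]
so $A(u_1,\dots,u_n) < 0$.

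Finally, for the limit as $u_k\uparrow 1$, I would invoke Abel's theorem (equivalently, monotone convergence for power series with nonnegative coefficients): since all $a_{\bf j}\geq 0$ for ${\bf j}\neq {\bf 0}$ and the total sum is finite, the partial sums are monotone in each $u_k$, so
\[
\lim_{u_1\uparrow 1,\dots,u_n\uparrow 1}\sum_{{\bf j}\neq {\bf 0}} a_{\bf j}\, u_1^{j_1}\cdots u_n^{j_n} = \sum_{{\bf j}\neq {\bf 0}} a_{\bf j},
\]
and adding $a_{\bf 0}$ gives the limit $0$, matching $A(1,\dots,1)$. The identical argument with $h_{\bf j}$ replacing $a_{\bf j}$ (using the analogous constraints in (1.2)) gives the same conclusion for $H$. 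There is really no obstacle here; the only thing that needs care is isolating the strictly-positive term ${\bf j}^\ast$ to upgrade the inequality from weak to strict, which the assumption $-a_{\bf 0} > 0$ (and $-h_{\bf 0}>0$) supplies directly.
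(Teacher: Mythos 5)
Your proof is correct, and it supplies exactly the ``simple algebra'' that the paper's proof of Lemma~2.1 declines to write out: split off $a_{\bf 0}$, use $\sum_{{\bf j}\neq{\bf 0}}a_{\bf j}=-a_{\bf 0}>0$ with at least one strictly positive coefficient to get the strict inequality on $[0,1)^n$, and monotone (or dominated) convergence of the nonnegative, summable series for the boundary limit. Nothing is missing; this matches the intended argument.
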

\begin{proof}
All the conclusions are easy to be proved by some simple algebra
operations and thus we omitted here.\hfill$\Box$
\end{proof}
\par
The following Lemma is a direct consequence of Li and
Wang~\cite{2012-LiWang-p875-894}, thus the proof is omitted here.
\par
\begin{lemma}
\label{le 2.2} Suppose $G(1,\cdots,1)$ is positively regular and
$\{B_i(u_1,\cdots,u_n); 1\leq i\leq n\}$ is nonsingular. Then the
equation
\begin{eqnarray}{\label{2.1}}
 \begin{cases} B_1(u_1,\cdots,u_n)=0; \\
               B_2(u_1,\cdots,u_n)=0; \\
              \ \ \ \ \ \ \ \cdots \\
               B_n(u_1,\cdots,u_n)=0.
 \end{cases}
\end{eqnarray}
has at most two solutions in $[0,1]^n$. Let ${\bf
q}=(q_1,\cdots,q_n)$ and $\rho(u_1,\cdots,u_n)$ denote the smallest
nonnegative solution to (\ref{2.1}) and the maximal eigenvalues of
$B(u_1,\cdots,u_n)$, respectively. Then,
\par
{\rm (i)}\  $q_i$ is the extinction probability when the Feller
minimal process starts at state ${\bf e}_i\ (i=1,\cdots,n)$.
Moreover, if $\rho(1,\cdots,1)\leq 0$, then ${\bf q}={\bf 1}$; while
if $\rho(1,\cdots,1)>0$, then ${\bf q}<{\bf 1}$, i.e.,
$q_1,\cdots,q_n<1$.
\par
{\rm (ii)}\  $\rho(q_1,\cdots,q_n)\leq 0$.
\end{lemma}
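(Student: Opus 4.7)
The plan is to identify the system (2.1) with the classical multi-type branching fixed-point equation and then invoke the standard theory. Define
$$
f_i(u_1,\ldots,u_n) := u_i - \frac{B_i(u_1,\ldots,u_n)}{b^{(i)}_{{\bf e}_i}},\qquad i=1,\ldots,n.
$$
Using $-b^{(i)}_{{\bf e}_i}=\sum_{{\bf j}\neq {\bf e}_i} b^{(i)}_{\bf j}$ from (1.2), one checks that $f_i$ is the probability generating function of the offspring distribution $\{b^{(i)}_{\bf j}/|b^{(i)}_{{\bf e}_i}|:{\bf j}\neq{\bf e}_i\}$ produced when a type-$i$ particle splits; hence $f:[0,1]^n\to[0,1]^n$ is coordinate-wise monotone and convex, and $\nabla f({\bf u})=G({\bf u})$. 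Assumptions (A-1) and (A-3) then say that $f$ is genuinely nonlinear and that the mean matrix $G({\bf 1},\ldots,{\bf 1})$ is positively regular, while (2.1) becomes the fixed-point equation ${\bf u}=f({\bf u})$.

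For part (i), invoke the classical Harris/Athreya--Ney multi-type fixed-point theorem, adapted to the continuous-rate setting in Li--Wang~\cite{2012-LiWang-p875-894}: ${\bf u}=f({\bf u})$ admits at most two solutions in $[0,1]^n$, namely ${\bf 1}$ and possibly a unique ${\bf q}<{\bf 1}$. To identify ${\bf q}$ with the extinction probability, set $\varphi_i(t):=P_{{\bf e}_i}(X(t)={\bf 0})$; the Kolmogorov backward equation yields $\varphi_i'(t)=B_i(\varphi(t))$ with $\varphi_i(0)=0$, and monotonicity together with a Gronwall-type comparison against any nonnegative fixed point of $f$ shows $\varphi(\infty)={\bf q}$. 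For the dichotomy, a direct check using $b^{(i)}_{{\bf e}_i}<0$ and $b^{(i)}_{\bf j}\geq 0$ for ${\bf j}\neq{\bf e}_i$ gives $G({\bf u})\geq 0$ entrywise on $[0,1]^n$, so Perron--Frobenius applies and the classical criterion reads $\rho(G({\bf 1}))\leq 1 \Leftrightarrow {\bf q}={\bf 1}$. Writing $G({\bf u})=I-D^{-1}B({\bf u})$ with $D=\mathrm{diag}(b^{(i)}_{{\bf e}_i})<0$, a short Metzler-matrix argument (both $B({\bf 1})$ and $-D^{-1}B({\bf 1})=G({\bf 1})-I$ are Metzler, and positive-diagonal scaling preserves the sign of the largest real eigenvalue) converts this to $\rho({\bf 1},\ldots,{\bf 1})\leq 0 \Leftrightarrow {\bf q}={\bf 1}$, as required.

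For part (ii), argue by contradiction. If $\rho({\bf q})>0$, the equivalence above gives $\rho(G({\bf q}))>1$; let ${\bf v}>0$ be the Perron eigenvector of $G({\bf q})$. The first-order expansion $f({\bf q}-\varepsilon{\bf v})={\bf q}-\varepsilon\,\rho(G({\bf q})){\bf v}+O(\varepsilon^2)$ shows $f({\bf q}-\varepsilon{\bf v})<{\bf q}-\varepsilon{\bf v}$ componentwise for sufficiently small $\varepsilon>0$. Monotonicity of $f$ then makes the forward iterates a decreasing sequence in $[0,1]^n$ whose limit is a fixed point of $f$ strictly below ${\bf q}$, contradicting minimality. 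Hence $\rho({\bf q})\leq 0$.

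I expect the main obstacle to be the careful translation of spectral information between the entrywise nonnegative Jacobian $G({\bf u})$ (where Perron--Frobenius applies cleanly) and the Metzler matrix $B({\bf u})$ (which carries a negative diagonal coming from $b^{(i)}_{{\bf e}_i}<0$), through the identity $G=I-D^{-1}B$ with $D<0$. The at-most-two-solutions assertion itself relies on strict convexity of $f$ along positive directions together with positive regularity of $G({\bf 1})$, and is exactly the multi-type fixed-point content imported from \cite{2012-LiWang-p875-894}, which is why the stated lemma can legitimately be treated as a direct consequence of that reference.
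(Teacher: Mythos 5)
Your proposal is correct, and it actually supplies more than the paper does: the paper gives no proof of Lemma~\ref{le 2.2} at all, stating only that it is ``a direct consequence of Li and Wang~\cite{2012-LiWang-p875-894}.'' Your route is the standard one and is presumably the content of that reference: the substitution $f_i({\bf u})=u_i-B_i({\bf u})/b^{(i)}_{{\bf e}_i}$ correctly turns $(\ref{2.1})$ into the fixed-point equation ${\bf u}=f({\bf u})$ for the offspring generating functions (note $-b^{(i)}_{{\bf e}_i}=\sum_{{\bf j}\neq{\bf e}_i}b^{(i)}_{\bf j}$ makes each $f_i$ a genuine p.g.f.\ with $f_i({\bf 1})=1$ and $\nabla f=G$ exactly as the paper defines it), the backward-equation identification $\varphi_i'(t)=B_i(\varphi(t))$ of ${\bf q}$ with the extinction probability is right, and the translation between $\rho(G({\bf 1}))\leq 1$ and $\rho({\bf 1},\cdots,{\bf 1})\leq 0$ via $B=|D|\,(G-I)$ with $|D|$ a positive diagonal matrix is the correct Metzler/Perron--Frobenius bookkeeping (positive diagonal scaling of an irreducible Metzler matrix preserves the sign of its spectral abscissa). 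What your argument buys over the paper's bare citation is an explicit, checkable chain from the hypotheses (A-1)--(A-3) to the Athreya--Ney fixed-point theorem. The only soft spot is in part (ii): the perturbation ${\bf q}-\varepsilon{\bf v}$ must remain in $[0,1]^n$, which fails if some $q_i=0$; that degenerate case (possible only when $b^{(i)}_{\bf 0}=0$ and excluded elsewhere in the paper, where irreducibility is invoked to get ${\bf q}\in(0,1]^n$) is handled directly by observing that $B({\bf 0})=(b^{(i)}_{{\bf e}_j})_{ij}$ is Metzler with nonpositive row sums, hence $\rho({\bf 0})\leq 0$. With that one-line patch the proof is complete.
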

\par
\begin{lemma}
\label{le 2.3} Let $P(t)=(p_{\bf ij}(t);{\bf i},{\bf j}\in {\bf
Z}_+^n)$ and $\Phi(\lambda)=(\phi_{\bf ij}(\lambda);{\bf i},{\bf
j}\in {\bf Z}_+^n)$ be the Feller minimal $Q$-function and
$Q$-resolvent, respectively, where $Q$ is given in $(1.1)-(1.2)$.
Then for any ${\bf i}\in {\bf Z}_+^n$ and $(u_1,\cdots, u_n)\in
[0,1)^n$, we have
\begin{eqnarray}{\label{2.2}}
\aligned \frac{\partial F_{\bf i}(t,u_1,\cdots,u_n)}{\partial t}=&
H(u_1,\cdots, u_n)p_{\bf i 0}(t)+A(u_1,\cdots,u_n)\sum_{{\bf j}\in
{{\bf Z}_+^n\setminus {\bf
0}}}p_{\bf ij}(t)u_{1}^{j_{1}}\cdots u_{n}^{j_{n}}\\
&+\sum _{k=1}^{n}B_k(u_1,\cdots,u_n)\frac{\partial F_{\bf
i}(t,u_1,\cdots,u_n)}{\partial u_k}
\endaligned
\end{eqnarray}
 where $F_{\bf i}(t,u_1,\cdots,u_n)=\sum_{{\bf j}\in
{\bf Z}_{+}^{n}} p_{\bf i  j}(t)u_{1}^{j_{1}}\cdots u_{n}^{j_{n}}$,
or in resolvent version
\begin{eqnarray}{\label{2.3}}
\aligned &\lambda \Phi_{\bf i}(\lambda,u_1,\cdots,
u_n)-u_1^{i_1}\cdots
u_n^{i_n}\\
=&H(u_1,\cdots,u_n)\phi_{\bf i0}(\lambda)+A(u_1,\cdots,u_n)\sum_{{\bf
j}\in {{\bf Z}_+^n\setminus {\bf
0}}}\phi_{\bf ij}(\lambda)u_{1}^{j_{1}}\cdots
u_{n}^{j_{n}}\\
&+\sum _{k=1}^{n}B_k(u_1,\cdots,u_n)\frac{\partial \Phi_{\bf
i}(\lambda,u_1,\cdots,u_n)}{\partial u_k}
\endaligned
\end{eqnarray}
where $\Phi_{\bf i}(\lambda,u_1,\cdots,u_n)=\sum_{{\bf j}\in {\bf
Z}_{+}^{n}}\phi_{\bf ij}(\lambda)u_{1}^{j_{1}}\cdots u_{n}^{j_{n}}$.
\end{lemma}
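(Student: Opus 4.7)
The plan is to deduce (2.2) from Kolmogorov's forward equation (1.3), which is in force for the Feller minimal $Q$-function by Definition~1.2. Componentwise, (1.3) reads $p'_{\bf ij}(t)=\sum_{{\bf k}\in{\bf Z}_+^n}p_{\bf ik}(t)q_{\bf kj}$. I would multiply this identity by $u_1^{j_1}\cdots u_n^{j_n}$ with $(u_1,\dots,u_n)\in[0,1)^n$, sum over ${\bf j}\in{\bf Z}_+^n$, and interchange summations by Fubini. Because $p_{\bf ij}(t)\geq 0$ with $\sum_{\bf j}p_{\bf ij}(t)\leq 1$, the series defining $F_{\bf i}(t,u_1,\dots,u_n)$ converges absolutely on $[0,1]^n$ and is analytic on the open polydisc $[0,1)^n$, where term-by-term differentiation in $t$ and in each $u_l$ is legitimate.

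The next step is to compute $\sum_{\bf j}q_{\bf kj}u_1^{j_1}\cdots u_n^{j_n}$ from (1.1). For ${\bf k}={\bf 0}$ this inner sum equals $H(u_1,\dots,u_n)$ after folding the negative diagonal $h_{\bf 0}$ back into the power series via (1.2). For $|{\bf k}|>0$, the immigration block $a_{{\bf j}-{\bf k}}\chi_{_{{\bf Z}_+^n}}({\bf j}-{\bf k})$ contributes $u_1^{k_1}\cdots u_n^{k_n}A(u_1,\dots,u_n)$ after substituting ${\bf m}={\bf j}-{\bf k}$, while each branching block $k_l\,b^{(l)}_{{\bf j}-{\bf k}+{\bf e}_l}\chi_{_{{\bf Z}_+^n}}({\bf j}-{\bf k}+{\bf e}_l)$ contributes $k_l\,u_1^{k_1}\cdots u_l^{k_l-1}\cdots u_n^{k_n}B_l(u_1,\dots,u_n)$ after ${\bf m}={\bf j}-{\bf k}+{\bf e}_l$, the prefactor $k_l$ silently absorbing the indicator when $k_l=0$. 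Multiplying through by $p_{\bf ik}(t)$ and re-summing over ${\bf k}$ then splits the right-hand side into three pieces: the ${\bf k}={\bf 0}$ piece yields $p_{\bf i0}(t)H(u_1,\dots,u_n)$; the immigration piece yields $A(u_1,\dots,u_n)\sum_{{\bf k}\neq{\bf 0}}p_{\bf ik}(t)u_1^{k_1}\cdots u_n^{k_n}$; and the branching piece yields $\sum_{l=1}^n B_l(u_1,\dots,u_n)\,\partial F_{\bf i}/\partial u_l$ once one recognises $\sum_{\bf k}p_{\bf ik}(t)\,k_l\,u_1^{k_1}\cdots u_l^{k_l-1}\cdots u_n^{k_n}=\partial F_{\bf i}/\partial u_l$. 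This yields (2.2).

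For (2.3), I would multiply (2.2) by $e^{-\lambda t}$ and integrate over $t\in[0,\infty)$. Integration by parts gives $\int_0^\infty e^{-\lambda t}\partial_t F_{\bf i}(t,u_1,\dots,u_n)\,dt=\lambda\Phi_{\bf i}(\lambda,u_1,\dots,u_n)-u_1^{i_1}\cdots u_n^{i_n}$, and each of the three terms on the right of (2.2) transforms directly by interchanging the Laplace integral with the $\bf k$-sum and with $\partial/\partial u_l$. The main obstacle is the rigorous justification of these interchanges (sum vs.\ $\partial_t$, sum vs.\ $\partial_{u_l}$, and their Laplace-integrated versions) on the open polydisc $[0,1)^n$; each reduces to uniform convergence of the relevant series on compact subsets of $[0,1)^n$, which follows from the uniform bounds $\sum_{\bf j}p_{\bf ij}(t)\leq 1$ and $\sum_{\bf j}\phi_{\bf ij}(\lambda)\leq 1/\lambda$ satisfied by the Feller minimal semigroup and resolvent. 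Once these interchanges are granted, the lemma is essentially careful bookkeeping of the three reaction channels (resurrection, immigration, branching) encoded in (1.1).
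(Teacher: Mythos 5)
Your proposal is correct and follows exactly the paper's own route: write the Kolmogorov forward equation componentwise, multiply by $u_1^{j_1}\cdots u_n^{j_n}$, sum over ${\bf j}$ to split the right-hand side into the resurrection ($H$), immigration ($A$), and branching ($\sum_k B_k\,\partial/\partial u_k$) pieces, then take Laplace transforms to get the resolvent version. The paper states this in three lines and leaves the bookkeeping and the interchange justifications implicit, so your write-up is simply a more detailed rendering of the same argument.
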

\begin{proof}
By the Kolmogorov forward equations, we have that for any ${\bf i},
{\bf j}\in {\bf Z}_{+}^{n}$,
$$
p'_{\bf ij}(t)=\sum_{{\bf k}\in {\bf Z}_+^n}p_{\bf
ik}(t)[\sum_{l=1}^nk_lb^{(l)}_{{\bf j}-{\bf k}+{\bf e}_l}\cdot
\chi_{_{{\bf Z}_+^n}}({\bf j}-{\bf k}+{\bf e}_l)
   +a_{\bf j-k}\cdot\chi_{_{{\bf Z}_+^n}}({\bf j}-{\bf k})(1-\delta_{\bf 0 k})+h_{\bf j}\cdot \delta _{\bf 0 k}]
$$
Multiplying $u_1^{j_1}\cdots u_n^{j_n}$ on both sides of the above
equality and summing over ${\bf Z}_+^n$ we immediately obtain
(\ref{2.2}). Taking Laplace transform on both sides of (\ref{2.2})
immediately yields (\ref{2.3}).\hfill$\Box$
\end{proof}

\par
\begin{lemma}
\label{le 2.4}\ Suppose $G(1,\cdots,1)$ is positively regular and
$\{B_i(u_1,\cdots,u_n); 1\leq i\leq n\}$ is nonsingular. If
$\rho(1,\cdots, 1)\leq 0$, then the $Q$-function is honest.
\begin{proof}
By Lemma 2.5 of Li and Wang~\cite{2012-LiWang-p875-894}, we know
that if $\rho(1,\cdots, 1)\leq 0$, then ${\bf q}={\bf 1}$.
\par
 Denote
$$
 r^*=\sup\{r\geq 0; B_k(u_1,\cdots,u_n)=r,\ k=1,\cdots,n \ \mbox{has a solution in}\
 [0,1]^n\}.
$$
By Lemma 2.7 of Li and Wang~\cite{2012-LiWang-p875-894}, we know
that $r^*>0$ and for any $r\in (0,r^*]$, there exist $u_1(r),\cdots,
u_n(r)\in [0,1)$ such that
$$
B_k(u_1(r),\cdots,u_n(r))=r,\ k=1,\cdots,n
$$
and moreover,
$$
   \lim_{r\downarrow 0}u_k(r)=1,\ \ k=1,\cdots,n.
$$
Letting $u_k=u_k(r),\ (k=1,\cdots, n)$ in (\ref{2.2}) and letting
$r\downarrow 0$ yield
$$
\sum_{{\bf j}\in {\bf Z}_{+}^{n}} p_{\bf i  j}(t)\geq 1
$$
i.e., $\sum_{{\bf j}\in {\bf Z}_{+}^{n}} p_{\bf i  j}(t)= 1$, then
the $Q$-process is honest. \hfill$\Box$
\end{proof}
\end{lemma}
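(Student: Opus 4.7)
The plan is to show that $\sum_{\bf j}p_{\bf ij}(t)\geq 1$ for every $\bf i$ and $t$; together with the trivial reverse inequality coming from the fact that $(p_{\bf ij}(t))$ is a subprobability measure, this yields honesty. The natural vehicle is the generating-function identity \eqref{2.2}, which becomes tractable if we evaluate it at points where $B_k(u)$ is simultaneously small for every $k$.

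First, I would invoke Lemma~\ref{le 2.2}(i) to note that $\rho(1,\cdots,1)\leq 0$ forces ${\bf q}={\bf 1}$, so the only solution of $B_k(u)=0$ ($k=1,\cdots,n$) in $[0,1]^n$ is ${\bf 1}$. This is what allows one to approach ${\bf 1}$ through a controlled ``characteristic'' curve. Appealing to Lemma~2.7 of Li and Wang, I would then obtain $r^*>0$ together with a family $u(r)=(u_1(r),\cdots,u_n(r))\in [0,1)^n$ for $r\in (0,r^*]$ satisfying $B_k(u(r))=r$ for every $k$, with $u_k(r)\to 1$ as $r\downarrow 0$.

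Substituting $u=u(r)$ into \eqref{2.2} and integrating over $[0,t]$ using $F_{\bf i}(0,u)=u^{\bf i}$ gives
$$
F_{\bf i}(t,u(r))-u(r)^{\bf i} = \int_0^t H(u(r))p_{\bf i 0}(s)\,ds + \int_0^t A(u(r))\widetilde{S}_{\bf i}(s,r)\,ds + r\int_0^t \sum_{k=1}^n \frac{\partial F_{\bf i}}{\partial u_k}(s,u(r))\,ds,
$$
where $\widetilde{S}_{\bf i}(s,r)=\sum_{{\bf j}\neq{\bf 0}}p_{\bf ij}(s)u(r)^{\bf j}$. The third integral is nonnegative because $F_{\bf i}$ is a power series with nonnegative coefficients, so dropping it yields a \emph{lower} bound on the left-hand side. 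By Lemma~\ref{le 2.1}, $H(u(r))\to 0$ and $A(u(r))\to 0$ as $r\downarrow 0$; since $p_{\bf i 0}(s)\leq 1$ and $\widetilde{S}_{\bf i}(s,r)\leq 1$ uniformly in $s\in[0,t]$, dominated convergence sends both remaining integrals to $0$. Finally, monotone convergence gives $F_{\bf i}(t,u(r))\uparrow \sum_{\bf j}p_{\bf ij}(t)$ and $u(r)^{\bf i}\to 1$, from which $\sum_{\bf j}p_{\bf ij}(t)\geq 1$ follows.

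The main obstacle is identifying the correct ``characteristic'' direction $u(r)$ along which the first-order PDE term collapses into the manifestly nonnegative quantity $r\sum_k \partial_{u_k} F_{\bf i}$; the hypothesis $\rho(1,\cdots,1)\leq 0$ combined with positive regularity and nonsingularity of $B$ is exactly what guarantees that such a curve exists and terminates at ${\bf 1}$, and this is precisely the content of the cited Lemma~2.7. Once this curve is available, the remainder of the argument—dropping the nonnegative characteristic term and using $H,A\leq 0$ on $[0,1)^n$—is a short limit passage, the only routine points being term-by-term differentiation of power series in the interior of $[0,1]^n$ and the application of dominated/monotone convergence to the relevant integrals and sums.
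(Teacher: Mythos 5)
Your proposal is correct and follows essentially the same route as the paper: both rely on Lemma~2.7 of Li and Wang to produce the curve $u(r)$ with $B_k(u(r))=r$ and $u_k(r)\to 1$, substitute it into the identity (2.2), and let $r\downarrow 0$ to conclude $\sum_{\bf j}p_{\bf ij}(t)\geq 1$. The only difference is that you spell out the intermediate integration, sign analysis, and limit interchanges that the paper compresses into a single sentence.
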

\par
Having completed the preparation, we now prove that for any given
$n$TBI $q$-matrix $Q$ defined in $(1.1)-(1.2)$, there always exists
exactly one $Q$-process satisfying Kolmogorov forward equation.
\par
\begin{theorem}
\label{th 2.1} Let $Q$ be a $n$TBI $q$-matrix defined as {\rm
(1.1)}--{\rm (1.2)}. Then there exists exactly one $n$TBIP, i.e.,
the Feller minimal process.
\begin{proof}
By Lemma~\ref{le 2.4}, We only need to consider the cases that
$\rho(1,\cdots, 1)>0$ or $\sum_{k=1}^nB_k(1,\cdots,1)<0$.
 For this purpose, we will show that the equations
\begin{eqnarray}\label{2.4}
\begin{cases}
{\bf \eta}(\lambda I-Q)=0,\ \ \eta_{\bf j}\geq 0, \ {\bf j}\in{\bf Z}_+^n,\\
 \sum_{{\bf j}\in {\bf Z}_+^n}\eta_{\bf j}<+\infty
 \end{cases}
\end{eqnarray}
have only trivial solution. Suppose that the contrary is true and
let $\eta =(\eta_{\bf j};\ {\bf j}\in {\bf Z}_+^n)$ be a non-trivial
solution of (\ref{2.4}) corresponding to $\lambda=1$. Then, by
(\ref{2.4}) we have
\begin{eqnarray}\label{2.5}
   \eta_{\bf j}=\sum_{{\bf k}\in {\bf Z}_+^n}\eta_{\bf
   k}(\sum_{i=1}^nk_ib^{(i)}_{{\bf j}-{\bf k}+{\bf e}_i}\cdot \chi_{_{{\bf Z}_+^n}}({\bf j}-{\bf k}+{\bf e}_i)
   +a_{{\bf j}-{\bf k}}\cdot \chi_{_{{\bf Z}_+^n}}({\bf j}-{\bf k})(1-\delta_{\bf 0 k})+h_{\bf j}\cdot \delta_{\bf 0 k}).
\end{eqnarray}
Multiplying $u_1^{j_1}\cdots u_n^{j_n}$ on both sides of
$(\ref{2.5})$ and using some algebra yields that
\begin{eqnarray*}
 \eta(u_1,\cdots,u_n)
=&\;& \sum_{i=1}^nB_i(u_1,\cdots,u_n)\cdot
\frac{\partial\eta(u_1,\cdots,u_n)}{\partial
u_i}\\
&\;&  +A(u_1,\cdots,u_n)(\eta(u_1,\cdots,u_n)-\eta_{\bf
0})+H(u_1,\cdots,u_n)\eta_{\bf 0}.
\end{eqnarray*}
i.e.,
\begin{eqnarray}\label{2.6}
 &\;& (1-A(u_1,\cdots,u_n))[\eta(u_1,\cdots,u_n)-\eta_{\bf
 0}]+(1-H(u_1,\cdots,u_n))\eta_{\bf 0} \nonumber \\
&=&\sum_{i=1}^nB_i(u_1,\cdots,u_n)\cdot
\frac{\partial\eta(u_1,\cdots,u_n)}{\partial u_i}.
\end{eqnarray}
 If $\rho(1,\cdots, 1)>0$ or $\sum_{k=1}^nB_k(1,\cdots,1)<0$, then
 by Lemma~\ref{le 2.2} and the irreducibility of
$\tilde{Q}$ we know from that (\ref{2.1}) has a solution
$(q_1,\cdots, q_n)\in (0,1)^n$. Let
$(u_1,\cdots,u_n)=(q_1,\cdots,q_n)$ in (\ref{2.6}), we can see that
the right-hand side of (\ref{2.6}) is zero. Therefore, the left-hand
side of (\ref{2.6}) must be zero, which implies that $\eta_{\bf
j}=0\ (\forall {\bf j} \in {\bf Z}_+^n)$. The proof is
completed.\hfill$\Box$
\end{proof}
\end{theorem}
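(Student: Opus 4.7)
The plan is to combine the Reuter-type uniqueness criterion for Kolmogorov's forward equation with the generating-function machinery already developed in Lemma~\ref{le 2.3}, exploiting the interior common zero of $B_1,\ldots,B_n$ supplied by Lemma~\ref{le 2.2}. Existence is free, since the Feller minimal construction always produces a $Q$-function satisfying the forward equation, so the entire burden is uniqueness.

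First I would dispose of the tame regime $\rho(1,\cdots,1)\leq 0$. Here Lemma~\ref{le 2.4} asserts that the Feller minimal $Q$-function is honest, and an honest minimal $Q$-function is automatically the unique $Q$-process, because any other solution dominates it entry-wise while itself being sub-stochastic, which forces equality. This reduces the problem to the complementary regime, where by Lemma~\ref{le 2.2} the system (\ref{2.1}) has a solution ${\bf q}=(q_1,\ldots,q_n)\in (0,1)^n$.

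For this main case I would invoke Reuter's criterion: the minimal process is the unique Kolmogorov-forward $Q$-process if and only if, for some (equivalently every) $\lambda>0$, the adjoint system
\[
\eta(\lambda I-Q)=0,\qquad \eta_{\bf j}\geq 0,\qquad \sum_{\bf j}\eta_{\bf j}<\infty,
\]
admits only the zero solution. Taking $\lambda=1$ and writing this componentwise gives the relation (2.5). Multiplying through by $u_1^{j_1}\cdots u_n^{j_n}$ and summing over ${\bf j}\in{\bf Z}_+^n$ (the interchange of summations is legal inside $[0,1)^n$ by $\sum_{\bf j}\eta_{\bf j}<\infty$) produces the functional identity (2.6). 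Now I substitute $(u_1,\cdots,u_n)=(q_1,\ldots,q_n)$: since $B_i({\bf q})=0$ for every $i$, the right-hand side of (2.6) vanishes. On the left, Lemma~\ref{le 2.1} gives $A({\bf q})<0$ and $H({\bf q})<0$, so $1-A({\bf q})>0$ and $1-H({\bf q})>0$, while $\eta_{\bf j}\geq 0$ together with ${\bf q}\in(0,1)^n$ yields $\eta({\bf q})-\eta_{\bf 0}\geq 0$ and $\eta_{\bf 0}\geq 0$. The two nonnegative summands on the left must therefore vanish separately, forcing $\eta_{\bf 0}=0$ and $\eta({\bf q})=0$; since every coordinate of ${\bf q}$ is strictly positive, the latter equality forces $\eta_{\bf j}\equiv 0$, contradicting nontriviality.

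The main obstacle I anticipate is the careful derivation of the functional equation (2.6) from the componentwise identity (2.5): one must split the sum over ${\bf k}$ according to whether ${\bf k}={\bf 0}$ or ${\bf k}\neq{\bf 0}$ so that the $h$-contribution collapses into $H(u_1,\cdots,u_n)\eta_{\bf 0}$ and the $a$-contribution into $A(u_1,\cdots,u_n)[\eta(u_1,\cdots,u_n)-\eta_{\bf 0}]$, while the branching terms $k_i b^{(i)}_{{\bf j}-{\bf k}+{\bf e}_i}$ repackage into $\sum_i B_i(u_1,\cdots,u_n)\,\partial\eta/\partial u_i$ after shifting the summation index by ${\bf e}_i$. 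Once this algebraic bookkeeping is carried out, the interior zero ${\bf q}$ furnished by Lemma~\ref{le 2.2} and the strict negativity of $A$ and $H$ on $[0,1)^n$ furnished by Lemma~\ref{le 2.1} close the argument in a single line.
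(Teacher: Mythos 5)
Your proposal is correct and follows essentially the same route as the paper: reduce to the case $\rho(1,\cdots,1)>0$ via the honesty result of Lemma~\ref{le 2.4}, apply the Reuter-type $l_1$ criterion for forward-equation uniqueness, pass to generating functions to obtain (\ref{2.6}), and evaluate at the interior root ${\bf q}\in(0,1)^n$ of (\ref{2.1}) to kill the right-hand side. Your spelled-out final step (splitting the left side into two nonnegative summands, each forced to vanish, and using ${\bf q}>0$ to conclude $\eta\equiv 0$) is exactly what the paper leaves implicit.
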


\par
\vspace{5mm}
 \setcounter{section}{3}
 \setcounter{equation}{0}
 \setcounter{theorem}{0}
 \setcounter{lemma}{0}
 \setcounter{corollary}{0}
 \setcounter{remark}{0}
\noindent {\large \bf 3. Extinction Property }
 \vspace{3mm}
 \par

In this section, we shall discuss the extinction property of the
$n$TBIP in the case that $h_{\bf 0}=0$. In this case, the most
interesting problem is the extinction probability. Let $\tilde{Q}$
denote the corresponding absorptive $n$TBI $q$-matrix and
$\tilde{P}(t)=(\tilde{p}_{\bf ij}(t);{\bf i},{\bf j}\in {\bf
Z}_+^n)$ denote the Feller minimal $\tilde{Q}$-function. Also let
$a_{\bf i0}=\lim_{t\rightarrow \infty}\tilde{p}_{\bf i0}(t)$ be the
extinction probability of $\tilde{P}(t)$ starting at state ${\bf
i}$. In order to discuss the extinction property, we need the
following important result, which plays a key role in our
discussion.
\par
\begin{theorem}
\label{th3.1} Suppose that $G(1,\cdots,1)$ is positively regular,
$\{B_i(u_1,\cdots,u_n);1\leq i\leq n\}$ is nonsingular. If
$B_1(0,\cdots,0)>0$, then the system of equations
\begin{eqnarray} \label{3.1}
\begin{cases}
   u'_k(u)=\frac{B_k(u,u_2,\cdots,u_n)}{B_1(u,u_2,\cdots,u_n)},\ \ & 2\leq k \leq n \\
   u_k|_{u=0}=0,\ \ \ &2\leq k \leq n
\end{cases}
\end{eqnarray}
has a unique solution $(u_k(u);2\leq k \leq n)$. Furthermore, this
solution satisfies
\par
$(i)$\ $(u_k(u);2\leq k \leq n)$ is well defined on $[0,q_1]$;
\par
$(ii)$\ $u'_k(0)\geq 0$ and $u'_k(u)>0$ for all $u\in (0,q_1)$ and
$2\leq k \leq n$;
\par
$(iii)$\ $u_k(q_1)=q_k,\ \ 2\leq k \leq n$.
\end{theorem}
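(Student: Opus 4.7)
The plan is to view (3.1) as the projection onto its first coordinate of an integral curve of the autonomous vector field $V(u)=(B_1(u),\ldots,B_n(u))$ starting at ${\bf 0}$. I would first solve the Cauchy problem $\dot w(t)=V(w(t))$ with $w(0)={\bf 0}$, show that the trajectory is trapped in the rectangle $R=[{\bf 0},{\bf q}]$, is coordinatewise monotone, and converges to ${\bf q}$; then invert $w_1$ and set $u_k(u):=w_k(w_1^{-1}(u))$ to recover the solution of (3.1) required by the theorem.

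\medskip

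\noindent\textbf{The autonomous flow.} Under (A-2) each $B_k$ is real-analytic on a neighborhood of $[0,1]^n$, so Picard--Lindel\"of yields a unique local analytic solution of the autonomous problem, and $\dot w_1(0)=B_1({\bf 0})>0$ by hypothesis. The only negative coefficient of $B_k$ is $b^{(k)}_{{\bf e}_k}$, multiplying the monomial $u_k$; hence $B_k$ is nondecreasing in every $u_j$ with $j\neq k$. On the lower face $\{w_k=0\}$ the offending monomial vanishes and $\dot w_k=B_k(w)\geq 0$; on the upper face $\{w_k=q_k,\,w_i\leq q_i\}$ monotonicity in the other variables gives $\dot w_k=B_k(w)\leq B_k({\bf q})=0$. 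Thus $R$ is forward-invariant and $w$ exists on $[0,\infty)$. Moreover $V$ is cooperative ($\partial B_k/\partial u_j\geq 0$ for $j\neq k$), so Kamke's comparison theorem produces an order-preserving flow, and from $w(h)\geq w(0)={\bf 0}$ for small $h>0$ we infer $w(t+h)\geq w(t)$, i.e.\ each $w_k$ is nondecreasing. Being bounded, $w(t)$ converges to a fixed point $w^\ast$ of $V$ inside $R$; Lemma~\ref{le 2.2} identifies the only zeros of $V$ in $[0,1]^n$ as ${\bf q}$ and ${\bf 1}$, and $w^\ast\leq{\bf q}$ forces $w^\ast={\bf q}$.

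\medskip

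\noindent\textbf{Reparametrization and verification of (i)--(iii).} Analyticity of $w$ together with $\dot w_1(0)>0$ excludes any plateau of $w_1$ (a subinterval on which $\dot w_1\equiv 0$ would propagate by analyticity back to $t=0$), so $w_1$ is strictly increasing and provides a bijection $[0,\infty)\to[0,q_1)$. Setting $\tau=w_1^{-1}$ and $u_k(u):=w_k(\tau(u))$ for $u\in[0,q_1)$ (and $u_k(q_1):=q_k$ by continuity), the chain rule yields $u_k'(u)=\dot w_k(\tau(u))/\dot w_1(\tau(u))=B_k/B_1$ along the trajectory, matching (3.1); uniqueness transfers from uniqueness of the autonomous problem via the reverse lift $t(u)=\int_0^u ds/B_1(s,u_2(s),\ldots,u_n(s))$. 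This proves (i) on $[0,q_1]$ and (iii) at the endpoint. For (ii), $u_k'(0)=B_k({\bf 0})/B_1({\bf 0})=b^{(k)}_{{\bf 0}}/b^{(1)}_{{\bf 0}}\geq 0$ is immediate, and the monotonicity of each $w_k$ converts to $u_k'\geq 0$ throughout $(0,q_1)$.

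\medskip

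\noindent\textbf{Main obstacle.} The delicate point is the \emph{strict} positivity $u_k'(u)>0$ on $(0,q_1)$, equivalently $B_1(w(t))$ and $B_k(w(t))$ remaining strictly positive for all $0<t<\infty$. In the one-type setting this is immediate from convexity of $B$ on $[0,q]$; in the multitype case I would exploit positive regularity of $G({\bf 1})$, which by continuity of the Jacobian of $V$ also holds along the trajectory in $R$, to rule out any proper coordinate hyperplane that is invariant under the cooperative flow---once $B_1$ has driven $w_1$ away from $0$, the coupling encoded by $G$ forces each $B_k(w(t))$ to become and remain strictly positive before $w(t)$ absorbs into ${\bf q}$. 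This is the step that requires genuine use of the $n$-type structure, as opposed to the purely ODE-theoretic arguments carrying the rest of the proof.
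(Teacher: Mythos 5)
Your reparametrization of (3.1) through the autonomous cooperative flow $\dot w=V(w)$, $w(0)={\bf 0}$, is a genuinely different route from the paper, which works directly with the non-autonomous system: it establishes local existence and uniqueness on $[0,u^*-\varepsilon]$ via a Lipschitz bound (where $u^*$ is the first zero of $B_1(u,0,\dots,0)$), proves $u_k'\geq 0$ and then $u_k'>0$ by contradiction arguments, and only afterwards extends the solution step by step to $[0,q_1)$. Your forward-invariance of $[{\bf 0},{\bf q}]$, the Kamke monotonicity, the identification of the limit with ${\bf q}$ via Lemma~\ref{le 2.2}, and the inversion of $w_1$ are sound and deliver (i), (iii) and the weak inequality in (ii) cleanly; arguably more transparently than the paper's first contradiction argument.

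However, there is a genuine gap exactly where you flag it: the strict positivity $u_k'(u)>0$ on $(0,q_1)$, equivalently $B_k(w(t))>0$ for $t>0$, is asserted but not proved, and the mechanism you propose does not work as stated. Positive regularity of $G(1,\dots,1)$ does \emph{not} transfer ``by continuity of the Jacobian'' to points of the trajectory near the origin: $B_{ij}(0,\dots,0)=b^{(i)}_{{\bf e}_j}$, so $G$ evaluated near ${\bf 0}$ can easily fail to be positively regular even when $G(1,\dots,1)$ is. Note also that strict positivity of $B_1$ along the trajectory is not a cosmetic refinement -- you need it for $\tau=w_1^{-1}$ to be differentiable and for the reparametrized curve to actually satisfy (3.1) in the classical sense on all of $(0,q_1)$. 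The paper closes this gap with a local first-order argument that you could import: suppose $\hat u\in(0,q_1)$ is a point where $\hat H=\{k:B_k(\hat u,u_2(\hat u),\dots,u_n(\hat u))=0\}\neq\emptyset$; then $\hat H^c\neq\emptyset$ (otherwise $(\hat u,u_2(\hat u),\dots)$ would be a root of (2.1) below ${\bf q}$, contradicting minimality), and irreducibility of the nonzero states forces $B_{kj}>0$ at that point for some $k\in\hat H$, $j\in\hat H^c$ (if the whole $\hat H\to\hat H^c$ block of the mean matrix vanished, types in $\hat H$ could never generate types in $\hat H^c$). Computing the one-sided derivative of $u\mapsto B_k(u,u_2(u),\dots,u_n(u))$ at $\hat u$ then gives $\sum_{i\in\hat H^c}B_{ki}\,u_i'(\hat u)>0$, which is incompatible with this function being nonnegative on $[0,\hat u]$ and zero at $\hat u$. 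Without this (or an equivalent) argument your proof of (ii), and hence of the global well-posedness of the reparametrization, is incomplete.
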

\begin{proof}
 Since $B_1(0,\cdots,0)>0$, we know that $B_1(u,0,\cdots,0)=0$ has
a positive root $u^*\in (0,1]$. For any $\varepsilon>0$,
$\{\frac{B_k(u,u_2,\cdots,u_n)}{B_1(u,u_2,\cdots,u_n)};\ 2\leq k
\leq n\}$ satisfy Lipschitz condition on $[0,u^*-\varepsilon]\times
[0,1]^{n-1}$, therefore, by the theory of differential equations,
(\ref{3.1}) has a unique solution $(u_k(u);2\leq k \leq n)$ defined
on $[0,u^*-\varepsilon]$. Furthermore, (\ref{3.1}) has a unique
solution $(u_k(u);2\leq k \leq n)$ defined on $[0,u^*)$ since
$\varepsilon
>0$ is arbitrary.
\par
We claim that $u'_k(u)\geq 0\ (2\leq k \leq n)$ for all $u\in
[0,u^*)$. In fact, if there exist $u\in [0,u^*)$ and $2\leq k\leq n$
such that $u'_k(u)<0$, denote
$$
\tilde{u}=\inf\{u\in [0,u^*): \  u'_k(u)<0\ \mbox{for some }\ k\in
\{2,\cdots, n\}\}
$$
and
\begin{eqnarray*}
H=\{k\in \{2,\cdots,n\}:\ \exists\ \varepsilon>0\ \mbox{s.t.}\
u'_k(u)<0\ \mbox{for}\ u\in (\tilde{u},\tilde{u}+\varepsilon)\}.
\end{eqnarray*}
It is obvious that $H\neq \emptyset$, say $H=\{\tilde{k},\cdots,n\}$
for convenience. It is easy to see that
\begin{eqnarray*}
B_k(\tilde{u},u_2(\tilde{u}),\cdots,u_{n}(\tilde{u}))=0,\ \ k\in H
\end{eqnarray*}
and there exists $\bar{u}\in (\tilde{u},u_1^*)$ such that
$u_k(\bar{u})\geq u_k(\tilde{u})\ (k<\tilde{k})$,
$u_k(\bar{u})<u_k(\tilde{u})\ (k\in H)$ and
\begin{eqnarray}
\label{3.2} B_k(\bar{u},u_2(\bar{u}),\cdots,u_{n}(\bar{u}))<0,\ \
k\in H.
\end{eqnarray}
 Consider
$$
I=\{B_k(\bar{u},u_2(\bar{u}),\cdots,u_{\tilde{k}-1}(\bar{u}),u_{\tilde{k}},\cdots,u_n);\
k\in H\}.
$$
Obviously,
$$
B_k(\bar{u},u_2(\bar{u}),\cdots,u_{\tilde{k}-1}(\bar{u}),u_{\tilde{k}}(\tilde{u}),\cdots,u_n(\tilde{u}))\geq
0;\ k\in H.
$$
Therefore, the smallest nonnegative zeros of $I$ is in
$\prod_{k=\tilde{k}}^n[u_{k}(\tilde{u}),1]$. Combining with
(\ref{3.2}) we know that $u_k(\bar{u})\geq u_k(\tilde{u})\ (k\in H)$
which contradicts with $u_k(\bar{u})<u_k(\tilde{u})\ (k\in H)$.
\par
We further claim that $u'_k(u)> 0\ (2\leq k \leq n)$ for all $u\in
(0,u^*]$. In fact, suppose that there exists $\hat{u}\in (0,u^*]$
such that
$$
  B_k(\hat{u},u_2(\hat{u}),\cdots, u_n(\hat{u}))=0
$$
for some $k\geq 2$. Denote
$$
   \hat{H}=\{k; B_k(\hat{u},u_2(\hat{u}),\cdots, u_n(\hat{u}))=0 \}
$$
and
$$
   \hat{H}^c=\{1,2,\cdots,n\}\setminus \hat{H}.
$$
It is easy to see that $\hat{H}^c\neq \emptyset$. By the
irreducibility of the set of nonzero states we know that there exist
$k\in \hat{H}, j\in \hat{H}^c$ such that
$$
B_{kj}(\hat{u},u_2(\hat{u}),\cdots, u_n(\hat{u}))>0.
$$
On the other hand,
$$
  \lim_{u\uparrow
  \hat{u}}\frac{B_k(u,u_2(u),\cdots,u_n(u))}{u-\hat{u}}=\sum_{i\in
  \hat{H}^c}B_{ki}(\hat{u},u_2(\hat{u}),\cdots, u_n(\hat{u}))\cdot
  u'_i(\hat{u})>0,
$$
which contradicts with $B_k(u,u_2(u),\cdots,u_n(u))\geq 0$ for all
$u\in [0,u^*]$, where $u'_1(\hat{u})=1$.
\par
Since $B_1(u^*,u_2(u^*),\cdots,u_n(u^*))>B_1(u^*,0,\cdots,0)=0$, we
can apply the mathematics induction to prove that the solution of
(\ref{3.1}) can be uniquely extended to $[0,q_1)$. Now, we claim
that
$$
u_k(q_1)=\lim_{u\uparrow q_1}u_k(u)=q_k,\ \ \ k\geq 2.
$$
Indeed, since $B_k(u,u_2(u),\cdots,u_n(u))>0,\ (k\geq 1)$ for all
$u\in(0,q_1)$, it can be easily seen that $u_k(u)\in (0,q_k)\ (k\geq
2)$ for all $u\in (0,q_1)$ and therefore, $u_k(q_1)\in (0,q_k]$ for
all $k\geq 2$. If $u_k(q_1)<q_k$ for some $k\geq 2$, denote
$$
   M=\{k\geq 2; u_k(q_1)<q_k \},\ \ \ \ \ \ M^c=\{1,2,\cdots,n\}\setminus M.
$$
It follows from the irreducibility of the set of nonzero states we
know that there exists $j\in M^c$ such that
$$
  \lim_{u\uparrow
  q_1}B_j(u,u_2(u),\cdots,u_n(u))=B_j(q_1,u_2(q_1),\cdots,u_n(q_1))<0,
$$
which contradicts with $B_j(u,u_2(u),\cdots,u_n(u))>0$ for all $u\in
(0,q_1)$.
\hfill $\Box$
\end{proof}
\par
\begin{corollary}
\label{cor3.1}\ Suppose that $G(1,\cdots,1)$ is positively regular,
$\{B_i(u_1,\cdots,u_n);1\leq i\leq n\}$ is nonsingular. If
$B_1(0,\cdots,0)>0,\ B_2(0,\cdots,0)>0$, then the system of
equations
\begin{eqnarray} \label{3.3}
\begin{cases}
   u'_k(u)=\frac{B_k(u_1,u,\cdots,u_n)}{B_2(u_1,u,\cdots,u_n)},\ \ &k\neq 2 \\
   u_k|_{u=0}=0,\ \ \ &k\neq 2
\end{cases}
\end{eqnarray}
has the same solution as $(\ref{3.1})$.
\end{corollary}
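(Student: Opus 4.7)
The plan is to interpret both (\ref{3.1}) and (\ref{3.3}) as different parametrizations of the same integral curve of the vector field $(B_1,\ldots,B_n)$ emanating from the origin---the former using $u_1$ as parameter, the latter using $u_2$---and then to invoke uniqueness to conclude that the two curves coincide in $[0,1]^n$.

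First I would apply Theorem~\ref{th3.1} to obtain the unique solution $(u_k(u);\,2\le k\le n)$ of (\ref{3.1}) on $[0,q_1]$, with $u_k(0)=0$ and $u_k(q_1)=q_k$. By part (ii) of that theorem, $u_2'(u)=B_2/B_1>0$ for $u\in(0,q_1)$, so $v:=u_2(u)$ is a strictly increasing $C^1$ bijection from $[0,q_1]$ onto $[0,q_2]$ whose inverse $\psi:[0,q_2]\to[0,q_1]$ is also $C^1$. I then define
$$
\tilde u_1(v):=\psi(v),\qquad \tilde u_k(v):=u_k(\psi(v))\ \ \text{for}\ k\ge 3.
$$

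The next step is a chain-rule verification that $(\tilde u_k;\,k\ne 2)$ solves (\ref{3.3}) on $[0,q_2]$: one computes
$$
\tilde u_1'(v)=\psi'(v)=\frac{1}{u_2'(\psi(v))}=\frac{B_1}{B_2}\bigg|_{(\tilde u_1(v),\,v,\,\tilde u_3(v),\ldots,\tilde u_n(v))}
$$
and, for $k\ge 3$,
$$
\tilde u_k'(v)=u_k'(\psi(v))\cdot\psi'(v)=\frac{B_k/B_1}{B_2/B_1}=\frac{B_k}{B_2}\bigg|_{(\tilde u_1(v),\,v,\,\tilde u_3(v),\ldots,\tilde u_n(v))},
$$
while $\tilde u_1(0)=\psi(0)=0$ and $\tilde u_k(0)=u_k(0)=0$, so the initial conditions of (\ref{3.3}) are met.

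Finally, since $B_2(0,\ldots,0)>0$ by hypothesis, Theorem~\ref{th3.1} applies verbatim after interchanging the first and second coordinates and yields uniqueness of the solution of (\ref{3.3}) on $[0,q_2]$. Hence the reparametrized solution constructed above is that unique solution, so (\ref{3.1}) and (\ref{3.3}) trace out exactly the same curve in $[0,1]^n$. The only subtle point is the strict monotonicity of $u_2(u)$ needed to invert the parametrization; this is supplied precisely by Theorem~\ref{th3.1}(ii)--(iii), which gives $u_2'>0$ on $(0,q_1)$ together with the endpoint value $u_2(q_1)=q_2$. Everything else is a routine change of variables.
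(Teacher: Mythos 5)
Your argument is correct and is essentially the paper's own proof run in the opposite direction: the paper starts from the unique solution of (\ref{3.3}), inverts $u_1(u_2)$ (using $u_1'>0$), and checks by the chain rule that the composition solves (\ref{3.1}), whereas you start from (\ref{3.1}), invert $u_2(u)$, and land in (\ref{3.3}); both then conclude by the uniqueness supplied by Theorem~\ref{th3.1}. The reparametrization-plus-uniqueness idea is identical, so no further comment is needed.
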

\par
\begin{proof}
By Theorem~\ref{th3.1}, we know that $(\ref{3.3})$ has a unique
solution. For convenience, we denote the solutions to $(\ref{3.3})$
by $(u_1(u_2),u_3(u_2),\cdots,u_n(u_2))$. Since $u'_1(u_2)>0$ for
all $u_2\in [0,q_2)$, we know that the function $u_1(u_2),\ (u_2\in
[0,q_2))$ has inverse function $u_2=f_2(u_1),\ (u_1\in [0,q_1))$
satisfying $\frac{df_2}{du_1}=1/u'_1$. Let
$u_k=f_k(u_1)=u_k(f_2(u_1))\ (u_1\in [0,q_1])$ for $k\geq 3$. It can
be easily seen that $u_k=f_k(u_1),\ (k\geq 2)$ is the solution to
$(\ref{3.1})$. \hfill $\Box$
\end{proof}
\par
In this paper, we do not consider the trivial case that any particle
will never dye. Therefore, by Theorem~\ref{th3.1} and
Corollary~\ref{cor3.1}, we will always assume that
$B_1(0,\cdots,0)>0$ without loss of generality and let
$(u_2(u),\cdots,u_n(u))\ (u\in [0,q_1])$ denote the unique solution
to $(\ref{3.1})$.
\par
Before stating our main result in this section, we first provide two useful lemmas.

\begin{lemma}\label{le3.1}
Let $(p_{\bf ij}(t);{\bf i},{\bf j}\in {\bf Z}_+^n)$ be the Feller
minimal $Q$-function where $Q$ is an absorptive $n$TBI $q$-matrix.
Then for any ${\bf i}\in {\bf Z}_+^n$,
\begin{eqnarray}\label{3.4}
\int_0^\infty p_{\bf ik}(t)dt<\infty, \ \ {\bf k}\neq {\bf 0}
\end{eqnarray}
and thus
\begin{eqnarray}\label{3.5}
\lim_{t\rightarrow\infty}p_{\bf ik}(t)=0, \ \ {\bf i}\in {\bf Z}_+^n, {\bf k}\neq {\bf 0}.
\end{eqnarray}
Moreover, for any ${\bf i}\in {\bf Z}_+^n\setminus {\bf 0}$ and
$(u_1,u_2,\cdots,u_n)\in [0,1)^n$, we have
\begin{eqnarray}\label{3.6}
\sum_{{\bf k}\neq {\bf 0}}(\int_0^\infty p_{\bf ik}(t)dt)\cdot
u_1^{k_1}u_2^{k_2}\cdots u_n^{k_n}<\infty.
\end{eqnarray}
\end{lemma}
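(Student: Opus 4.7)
The plan is to first reduce the integrability and decay assertions $(\ref{3.4})$ and $(\ref{3.5})$ to the transience of every nonzero state, and then use the resolvent form of the forward equation evaluated at the extinction-probability vector from Lemma~\ref{le 2.2} to obtain the generating-function estimate $(\ref{3.6})$.

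For $(\ref{3.4})$ and $(\ref{3.5})$, the key observation is that $Q$ being absorptive means $h_{\bf j}\equiv 0$, so state ${\bf 0}$ is absorbing. The standing assumption $B_1(0,\ldots,0) = b^{(1)}_{\bf 0} > 0$ gives, via formula (1.1), the direct jump rate $q_{{\bf e}_1,{\bf 0}} = b^{(1)}_{\bf 0}>0$, and combined with the irreducibility of $C := {\bf Z}_+^n\setminus\{{\bf 0}\}$ this makes ${\bf 0}$ accessible from every ${\bf k}\in C$. Hence $C$ is a transient class, so each ${\bf k}\in C$ is a transient state; the standard first-entrance decomposition then gives $G_{\bf ik} := \int_0^\infty p_{\bf ik}(t)\,dt \leq G_{\bf kk} < \infty$ for every starting state ${\bf i}$, which is $(\ref{3.4})$, and the companion fact that $p_{\bf ik}(t)\to 0$ as $t\to\infty$ for transient ${\bf k}$ is $(\ref{3.5})$.

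For $(\ref{3.6})$, I would substitute the extinction-probability vector ${\bf q}$ from Lemma~\ref{le 2.2} (at which $B_k({\bf q})=0$ for every $k$) into the resolvent identity $(\ref{2.3})$ with $H\equiv 0$. The partial-derivative terms drop out and one obtains the scalar relation
$$\Phi_{\bf i}(\lambda,{\bf q}) - \phi_{\bf i0}(\lambda) \;=\; \frac{{\bf q}^{\bf i} - \lambda\,\phi_{\bf i0}(\lambda)}{\lambda - A({\bf q})}.$$
Using $A({\bf q})<0$ (Lemma~\ref{le 2.1}, provided ${\bf q}\neq{\bf 1}$), the Abelian limit $\lim_{\lambda\downarrow 0}\lambda\phi_{\bf i0}(\lambda) = a_{\bf i0}$, and monotone convergence applied to the left-hand side, which equals $\sum_{{\bf k}\neq{\bf 0}}{\bf q}^{\bf k}\phi_{\bf ik}(\lambda)$, then yields the closed form
$$\sum_{{\bf k}\neq{\bf 0}}{\bf q}^{\bf k}\,G_{\bf ik} \;=\; \frac{{\bf q}^{\bf i} - a_{\bf i0}}{-A({\bf q})} \;<\; \infty.$$
For any $(u_1,\ldots,u_n)\in[0,1)^n$ with $u_k\leq q_k$ for every $k$, $(\ref{3.6})$ then follows from monotonicity of $u\mapsto u^{\bf k}$.

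The main obstacle is to handle those $u\in[0,1)^n$ not dominated by ${\bf q}$ (which can occur only if some $q_k<1$) as well as the boundary case ${\bf q}={\bf 1}$ where $A({\bf q})=0$ makes the formula above degenerate. My plan is to run the same substitution along a general characteristic curve $(u_1,u_2(u_1),\ldots,u_n(u_1))$ supplied by Theorem~\ref{th3.1} or its symmetric variant in Corollary~\ref{cor3.1}: along such a curve $\sum_k B_k\,\partial_{u_k}\Phi_{\bf i}$ collapses to $B_1\, d\Phi_{\bf i}/du_1$ by the chain rule, converting $(\ref{2.3})$ into a linear first-order ODE in $u_1$ whose integration from the endpoint $u_1=q_1$ yields a representation for $\Phi_{\bf i}(\lambda,u)$. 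The crucial verification is that $B_1$ does not vanish in the interior of the curve --- guaranteed by Theorem~\ref{th3.1}(ii) --- so that this representation remains bounded as $\lambda\downarrow 0$ and produces the required finite value.
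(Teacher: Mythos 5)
Your treatment of $(\ref{3.4})$ and $(\ref{3.5})$ is correct and genuinely different from the paper's: you argue that every nonzero state is transient because the absorbing state ${\bf 0}$ is accessible from it (via $q_{{\bf e}_1{\bf 0}}=b^{(1)}_{\bf 0}>0$ and the irreducibility of the nonzero states), and then invoke finiteness of the Green function; the paper instead integrates the forward equation at ${\bf j}={\bf 0}$ to get $1\geq b^{(1)}_{\bf 0}\int_0^t p_{{\bf i}{\bf e}_1}(u)\,du$ and propagates the bound recursively through the forward equations. Your algebraic identity at ${\bf u}={\bf q}$, namely $\sum_{{\bf k}\neq{\bf 0}}{\bf q}^{\bf k}G_{\bf ik}=({\bf q}^{\bf i}-a_{\bf i0})/(-A({\bf q}))$, is also correct when ${\bf q}<{\bf 1}$ (the derivative terms in $(\ref{2.3})$ genuinely vanish there because $\Phi_{\bf i}(\lambda,\cdot)$ has finite partials at an interior point), and it settles $(\ref{3.6})$ for all $u$ dominated componentwise by ${\bf q}$; this is a neater route than the paper's for that region.

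The gap is in the remaining region of the supercritical case. When $0<\rho(1,\cdots,1)$, so ${\bf q}<{\bf 1}$, the lemma still asserts $(\ref{3.6})$ for every $(u_1,\cdots,u_n)\in[0,1)^n$, including points with some $u_k\in(q_k,1)$. Your proposed fix --- integrating $(\ref{2.3})$ along the characteristic curve of Theorem~\ref{th3.1} --- cannot reach those points: that curve is defined only for $u\in[0,q_1]$ and terminates at ${\bf q}$ (Theorem~\ref{th3.1}(iii)), where all the $B_k$ vanish and the ODE degenerates, so no point on the curve dominates a $u$ with $u_k>q_k$, and the monotonicity step you rely on is unavailable. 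The paper closes this case with a different device: given $(\tilde u_1,\cdots,\tilde u_n)\in\prod_{i}(q_i,1)$ it constructs, by iterating Lemma~\ref{le 2.2} over the index sets $H_1,H_2,\cdots$, a point $(\bar u_1,\cdots,\bar u_n)\in\prod_i[\tilde u_i,1)$ with $B_i(\bar u_1,\cdots,\bar u_n)<0$ for every $i$; then in the integrated forward equation both the $A$-term and every $B_k$-term carry a strictly negative coefficient multiplying a nonnegative integral, so $-A(\bar u_1,\cdots,\bar u_n)\int_0^T\sum_{{\bf j}\neq{\bf 0}}p_{\bf ij}(t)\bar u_1^{j_1}\cdots\bar u_n^{j_n}\,dt\leq 1$ uniformly in $T$, which yields $(\ref{3.6})$ by monotonicity. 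You need this construction (or some other way of producing a dominating point at which all the drift coefficients have a fixed sign) to complete the proof. Your handling of the boundary case ${\bf q}={\bf 1}$ via the characteristic curve is, modulo details, the same as the paper's second case and is fine.
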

\par
\begin{proof}
It follows from the Kolmogorov forward equations that
\begin{eqnarray*}
p_{\bf i0}(t)=\delta_{\bf i0}+b_{\bf 0}^{(1)}\cdot\int_0^t p_{{\bf
i}{\bf e}_1}(u)du
\end{eqnarray*}
which clearly implies that $\int_0^\infty p_{{\bf i}{\bf
e}_1}(t)dt<\infty$ for all ${\bf i}\in {\bf Z}_+^n$. By repeatedly
using the Kolmogorov forward equations recursively and the
irreducibility of the nonzero states, (~\ref{3.4}) can be easily
proven. Then (~\ref{3.5}) immediately follows from (~\ref{3.4}).
Finally we turn to prove (~\ref{3.6}). For this purpose, we shall
consider two different cases separately.
\par
First, consider the case $0<\rho(1,\cdots,1)\leq \infty$. By
Lemma~\ref{le 2.1}(ii), (\ref{2.1}) has a root
$(q_1,q_2,\cdots,q_n)\in (0,1)^n$. Let
$(\tilde{u}_1,\cdots,\tilde{u}_n)\in \prod_{i=1}^n(q_i,1)$. We claim
that there exists $(\bar{u}_1,\cdots, \bar{u}_n)\in
\prod_{i=1}^n[\tilde{u}_i,1)$ such that
\begin{eqnarray} \label{3.7}
B_i(\bar{u}_1,\cdots,\bar{u}_n)< 0,\ \forall i=1,2,\cdots,n.
\end{eqnarray}
Indeed, let $H_1=\{i;B_i(\tilde{u}_1,\cdots,\tilde{u}_n)> 0\}$. By
Li and Wang~\cite{2012-LiWang-p875-894} we know that $H_1\neq
\{1,2,\cdots,n\}$ since $\rho(1,\cdots,1)>0$. If $H_1=\emptyset$
then $B_i(\tilde{u}_1,\cdots,\tilde{u}_n)\leq 0\ (\forall
i=1,\cdots,n)$. If $H_1\neq \emptyset$ then by Lemma~\ref{le 2.2},
we know that there exists $(u^{(1)}_1,\cdots,u^{(1)}_n)\in
\prod_{i=1}^n[\tilde{u}_i,1)$ such that
$B_i(u^{(1)}_1,\cdots,u^{(1)}_n)=0$ for all $i\in H_1$. Let
$$
  H_2=\{i; B_i(u^{(1)}_1,\cdots,u^{(1)}_n)>0\}
$$
then $H_2\subset \{1,2,\cdots,n\}\setminus H_1$. It is obvious that
$H_1\cup H_2\neq \{1,2,\cdots,n\}$. If $H_2=\emptyset$
 then $B_i(u^{(1)}_1,\cdots,u^{(1)}_n)\leq 0\ (\forall i=1,\cdots,n)$. If
 $H_2\neq\emptyset$ then by Lemma~\ref{le 2.2}, we know that there exists
$(u^{(2)}_1,\cdots,u^{(2)}_n)\in \prod_{i=1}^n[u^{(1)}_i,1)$ such
that $B_i(u^{(2)}_1,\cdots,u^{(2)}_n)=0$ for all $i\in H_1\cup H_2$.
By repeatedly using the same argument and noting $\{1,2,\cdots,n\}$
is a finite set, we can obtain $H_1,\ H_2,\ \cdots, H_m$ such that
$H_{m+1}=\emptyset$ and hence $B_i(u^{(m)}_1,\cdots,u^{(m)}_n)\leq
0\ (\forall i=1,\cdots,n)$. It is obvious that $H_1\cup\cdots \cup
H_{m}\neq \{1,2,\cdots,n\}$, i.e,
$B_i(u^{(m)}_1,\cdots,u^{(m)}_n)<0$ for all $i\in
\{1,\cdots,n\}\setminus H_1\cup\cdots \cup H_{m}$. By the
irreducibility of nonzero states, we can see that (\ref{3.7}) holds
for $\bar{u}_i$ smaller than (if necessary) but closing to
$u^{(m)}_i$.
\par
By (\ref{2.2}) we know that there exists $k\in
\{1,2,\cdots,n\}\setminus H_1\cup\cdots \cup H_{m}$ such that
\begin{eqnarray*}
\frac{\partial F_{\bf i}(t,\bar{u}_1,\cdots,\bar{u}_n)}{\partial
t}&= & A(\bar{u}_1,\cdots,\bar{u}_n)\sum_{{\bf j}\in {{\bf
Z}_+^n\setminus {\bf 0}}}p_{\bf ij}(t)u_{1}^{j_{1}}\cdots
u_{n}^{j_{n}}\\
&\;& +\sum_{k=1}^nB_k(\bar{u}_1,\cdots,\bar{u}_n)\frac{\partial
F_{\bf i}(t,\bar{u}_1,\cdots,\bar{u}_n)}{\partial u_k}
\end{eqnarray*}
which implies (\ref{3.6}).
\par
 Next consider the case that
$\rho(1,1,\cdots,1)\leq 0$. Let $(\tilde{u}_1,\cdots,\tilde{u}_n)\in
(0,1)^n$. By Theorem~\ref{th3.1}, there exists $s\in
(\tilde{u}_1,1)$ such that $(s,u_2(s),\cdots,u_n(s))\in
\prod_{i=1}^n(\tilde{u}_i,1)$ and hence
 by (\ref{2.2}) and Theorem~\ref{th3.1} we have
\begin{eqnarray*}
1\geq A(s,u_2(s),\cdots,u_n(s))G_{\bf
i}(T,s)+B_1(s,u_2(s),\cdots,u_n(s))\cdot \frac{\partial G_{\bf
i}(T,s)}{\partial s}
\end{eqnarray*}
where $G_{\bf i}(T,s)=\sum_{{\bf j}\in {{\bf Z}_+^n\setminus {\bf
0}}}(\int_0^Tp_{\bf ij}(t)dt)s^{j_1}u_2(s)^{j_2}\cdots
u_n(s)^{j_n}$. (\ref{3.6}) can be obtained immediately from the
above inequality. \hfill $\Box$
\end{proof}
\par
For any ${\bf i}\neq {\bf 0}$, define
\begin{eqnarray}\label{3.8}
  G_{\bf i}(u)=\sum_{\bf j\in {\bf Z}_+^n\setminus{\bf
  0}}(\int_0^{\infty}p_{\bf i j}(t)dt)\cdot
  u^{j_1}[u_2(u)]^{j_2}\cdots [u_n(u)]^{j_n}.
\end{eqnarray}
 From Lemma~\ref{le3.1}, $G_{\bf i}(u)$ is well-defined for $u\in
 [0,q_1)$.

\begin{theorem}\label{th3.2}
For any ${\bf i}\neq {\bf 0}$, $a_{\bf i0} = 1$ if and only if
$\rho(1,\cdots,1)\leq 0$ and $J = +\infty$ where
\begin{eqnarray}\label{3.8a}
J:=\int_0^1\frac{1}{B_1(y,u_2(y),\cdots,u_n(y))}\cdot
e^{\int_0^y\frac{A(x,u_2(x),\cdots,u_n(x))}{B_1(x,u_2(x),\cdots,u_n(x))}dx}dy.
\end{eqnarray}
 More specifically,
\par
$(i)$\ If $\rho(1,\cdots,1)\leq 0$ and $J = +\infty$, then $a_{\bf
i0} = 1({\bf i}\neq {\bf 0})$.
\par
$(ii)$\ If $\rho(1,\cdots,1)\leq 0$ and $J < +\infty$, then
\begin{eqnarray}\label{3.9}
a_{\bf i0}=\frac{\int_0^1\frac{y^{i_1}[u_2(y)]^{i_2}\cdots
[u_n(y)]^{i_n}}{B_1(y,u_2(y),\cdots,u_n(y))}\cdot
e^{\int_0^y\frac{A(x,u_2(x),\cdots,u_n(x))}{B_1(x,u_2(x),\cdots,u_n(x))}dx}dy}
{\int_0^1\frac{1}{B_1(y,u_2(y),\cdots,u_n(y))}\cdot
e^{\int_0^y\frac{A(x,u_2(x),\cdots,u_n(x))}{B_1(x,u_2(x),\cdots,u_n(x))}dx}dy}<1
\end{eqnarray}
\par
$(iii)$\ If $0<\rho(1,\cdots,1)\leq +\infty$ and thus the equation $(\ref{2.1})$ possesses a smallest nonnegative root
${\bf q}=(q_1, u_2(q_1),\cdots,u_n(q_1))\in (0,1)^n$, then
\begin{eqnarray*}
a_{\bf i0}=\frac{\int_0^{q_1} \frac{y^{i_1}u_2(y)^{i_2}\cdots
u_n(y)^{i_n}}{B(y,u_2(y),\cdots,u_n(y))}\cdot
e^{\int_0^y\frac{A(x,u_2(x),\cdots,u_n(x))}{B_1(x,u_2(x),\cdots,u_n(x))}dx}dy}{\int_0^{q_1}\frac{1}{B(y,u_2(y),
\cdots,u_n(y))}\cdot
e^{\int_0^y\frac{A(x,u_2(x),\cdots,u_n(x))}{B_1(x,u_2(x),\cdots,u_n(x))}dx}dy}<\prod_{k=1}^nq_k^{i_k}<1,
\ \ {\bf i}\neq {\bf 0}.
\end{eqnarray*}
\end{theorem}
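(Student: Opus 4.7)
The plan is to reduce the absorption-probability problem to a one-dimensional linear ODE along the characteristic curve provided by Theorem 3.1, solve it, and then pin down $a_{\bf i 0}$ using the nonnegativity of $G_{\bf i}$ together with the Feller-minimal characterization of $P(t)$ from Theorem 2.1. Throughout I work in the absorptive case $H\equiv 0$, so equation (2.2) of Lemma 2.3 loses its first term.

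Integrating that equation in $t$ over $[0,\infty)$, and using $F_{\bf i}(0,\cdot) = u_1^{i_1}\cdots u_n^{i_n}$, the asymptotic $F_{\bf i}(t,\cdot)\to a_{\bf i 0}$ as $t\to\infty$ (which follows from $p_{{\bf i}{\bf j}}(t)\to 0$ for ${\bf j}\neq{\bf 0}$ in Lemma 3.1), and the absolute integrability (3.6), yields the stationary PDE
$$
a_{\bf i 0} - u_1^{i_1}\cdots u_n^{i_n} = A({\bf u})\widetilde G_{\bf i}({\bf u}) + \sum_{k=1}^n B_k({\bf u})\frac{\partial \widetilde G_{\bf i}}{\partial u_k}({\bf u}),\qquad {\bf u}\in[0,1)^n,
$$
with $\widetilde G_{\bf i}({\bf u}):=\sum_{{\bf j}\neq{\bf 0}}\bigl(\int_0^\infty p_{{\bf i}{\bf j}}(t)\,dt\bigr)u_1^{j_1}\cdots u_n^{j_n}$. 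Restricting to the characteristic $(u,u_2(u),\ldots,u_n(u))$ of Theorem 3.1 and using $u_k'(u)=B_k/B_1$, the chain rule collapses the gradient term to $B_1\cdot G_{\bf i}'(u)$, where $G_{\bf i}$ is defined in (3.8). The PDE becomes the first-order linear ODE
$$
B_1 \cdot G_{\bf i}'(u) + A\cdot G_{\bf i}(u) = a_{\bf i 0} - u^{i_1}u_2(u)^{i_2}\cdots u_n(u)^{i_n},\qquad G_{\bf i}(0)=0,
$$
with coefficients evaluated on the characteristic. Dividing by $B_1$ and applying the integrating factor $\exp(\int_0^u\alpha(x)\,dx)$ with $\alpha(x):=A/B_1$ on the characteristic, then integrating, produces the key identity
$$
G_{\bf i}(u)\,e^{\int_0^u\alpha(x)\,dx} = a_{\bf i 0}\,\Psi(u) - \Phi(u),\qquad u\in[0,q_1),
$$
where $\Psi,\Phi$ are the two integrals appearing in the theorem (so $\Psi(1)=J$ when $q_1=1$).

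From this identity, nonnegativity of $G_{\bf i}$ forces $a_{\bf i 0}\geq\Phi(u)/\Psi(u)$ for every $u\in[0,q_1)$. For the reverse inequality I would invoke the Feller-minimal (hence unique, by Theorem 2.1) character of $P(t)$: any strictly larger admissible value plugged into the identity would yield a different nonnegative ODE solution, which would correspond to a second $Q$-process and contradict uniqueness. Combined with the monotonicity $(\Phi/\Psi)' = \Psi^{-1}\Psi'\bigl(y^{i_1}u_2(y)^{i_2}\cdots u_n(y)^{i_n}-\Phi/\Psi\bigr)\geq 0$, which holds because $\Phi/\Psi$ is a weighted average of the strictly increasing integrand $y^{i_1}u_2(y)^{i_2}\cdots u_n(y)^{i_n}$, this gives $a_{\bf i 0}=\lim_{u\uparrow q_1}\Phi(u)/\Psi(u)$.

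Evaluating this limit case by case completes the proof: in case (i), $\Psi(u)\to J=\infty$ while the integrand ratio converges to $1$, so a Stolz--Ces\`aro argument forces $\Phi(u)/\Psi(u)\to 1$ and hence $a_{\bf i 0}=1$; in case (ii), $\Psi$ and $\Phi$ extend continuously to $u=1$ and the limit is $\Phi(1)/J<1$ thanks to $y^{i_1}u_2(y)^{i_2}\cdots u_n(y)^{i_n}<1$ on $(0,1)$; in case (iii), the same analysis at $q_1<1$ produces the displayed formula, and the strict comparison $y^{i_1}u_2(y)^{i_2}\cdots u_n(y)^{i_n}<q_1^{i_1}\cdots q_n^{i_n}$ on $(0,q_1)$ yields $a_{\bf i 0}<\prod_k q_k^{i_k}<1$. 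The main obstacle is precisely the equality step that upgrades $a_{\bf i 0}\geq\sup\Phi/\Psi$ to equality uniformly across all three cases, since $\Psi(q_1)$ and $\int_0^{q_1}\alpha$ can each be finite or infinite in different combinations and the boundary behavior of $G_{\bf i}(u)\,e^{\int_0^u\alpha}$ must be handled accordingly; everything else reduces to clean linear-ODE manipulation and elementary calculus.
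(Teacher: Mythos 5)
Your reduction to the ODE along the characteristic of Theorem 3.1, the integrating-factor identity $G_{\bf i}(u)\,e^{\int_0^u\alpha(x)dx}=a_{\bf i 0}\Psi(u)-\Phi(u)$, and the lower bound $a_{\bf i 0}\geq\lim_{u\uparrow q_1}\Phi(u)/\Psi(u)$ obtained from nonnegativity of $G_{\bf i}$ all coincide with the paper's argument, and they already settle case (i), where that limit equals $1$ (the paper phrases this as: if $a_{\bf i 0}<1$ and $J=+\infty$ the right-hand side tends to $-\infty$ while the left-hand side is nonnegative). The genuine gap is exactly where you flag it: the upper bound in cases (ii) and (iii). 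Your proposed resolution --- that a strictly larger value of $a_{\bf i 0}$ inserted into the identity would yield a second nonnegative ODE solution and hence a second $Q$-process, contradicting Theorem 2.1 --- does not work. A larger constant does produce a larger nonnegative solution of the linear ODE, but nothing identifies that function with the generating function of $\int_0^\infty p_{\bf i j}(t)\,dt$ for any $Q$-process, so no conflict with uniqueness of the $Q$-function arises. Nonnegativity of $G_{\bf i}$ plus uniqueness of the process simply cannot distinguish the true $a_{\bf i 0}$ from any larger admissible constant.

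The paper closes this gap with a different, genuinely probabilistic input: by Lemma 3.2 of Li and Chen (2006), the vector $(a_{\bf i 0};{\bf i}\neq{\bf 0})$ is the \emph{minimal} solution of the linear system $\sum_{{\bf j}\neq{\bf 0}}q_{\bf i j}x_{\bf j}+q_{\bf i 0}=0$, $0\leq x_{\bf j}\leq 1$. One then verifies by a direct computation (an integration by parts along the characteristic) that the candidate
$x^*_{\bf j}=\Psi(q_1)^{-1}\int_0^{q_1}\frac{y^{j_1}u_2(y)^{j_2}\cdots u_n(y)^{j_n}}{B_1(y,u_2(y),\cdots,u_n(y))}\,e^{\int_0^y\alpha(x)dx}\,dy$
satisfies this system, whence $a_{\bf i 0}\leq x^*_{\bf i}$; combined with your lower bound this gives equality. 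Some such minimality (or first-passage) characterization of the absorption probabilities is indispensable here; without it your argument establishes only the one-sided inequality $a_{\bf i 0}\geq\lim_{u\uparrow q_1}\Phi(u)/\Psi(u)$ in cases (ii) and (iii).
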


\par
\begin{proof}
Integrating the equality $(2.2)$ with respect to $t\in [0,\infty)$
and using Theorem $\ref{th3.1}$, we have that for any $u\in [0,1)$
and ${\bf i}\neq {\bf 0}$,
\begin{eqnarray}\label{3.10}
& &a_{\bf i0}-u^{i_1}u_2(u)^{i_2}\cdots u_n(u)^{i_n}\nonumber\\
&=&B_1(u,u_2(u),\cdots,u_n(u))\cdot \frac{\partial F_{\bf
i}(u,u_2(u),\cdots,u_n(u))}{\partial
u_k}+A(u,u_2(u),\cdots,u_n(u))\cdot G_{\bf
i}(u)\nonumber\\
&=&B_1(u,u_2(u),\cdots,u_n(u))\cdot G'_{\bf
i}(u)+A(u,u_2(u),\cdots,u_n(u))\cdot G_{\bf i}(u)
\end{eqnarray}
where $G_{\bf i}(u)<+\infty$ is given in $(\ref{3.6})$. First
consider the case $\rho(1,\cdots,1)\leq 0$. Solving the ordinary
differential equation $(\ref{3.10})$ for $u\in [0,1)$ immediately
yields
\begin{eqnarray}\label{3.11}
&\;& G_{\bf i}(u)\cdot
e^{\int_0^u\frac{A(x,u_2(x),\cdots,u_n(x))}{B_1(x,u_2(x),\cdots,u_n(x))}dx}\nonumber\\
&=& \int_0^u\frac{a_{\bf i0}-y^{i_1}u_2(y)^{i_2}\cdots
u_n(y)^{i_n}}{B_1(y,u_2(y),\cdots,u_n(y))}\cdot
e^{\int_0^y\frac{A(x,u_2(x),\cdots,u_n(x))}{B_1(x,u_2(x),\cdots,u_n(x))}dx}dy
\end{eqnarray}
This immediately implies that if $J=+\infty$, then $a_{\bf i0}=1$.
Indeed, if $a_{\bf i0} < 1$, then by letting $s\uparrow 1$ in
$(\ref{3.11})$ we see that the right hand side of $(\ref{3.11})$ tends
to $-\infty$, while the left hand side is always nonnegative, which
is a contradiction. Hence (i) is proven.
\par
Now we turn to (ii). First note that $J<+\infty$ implies
$\int_0^1\frac{A(x,u_2(x),\cdots,u_n(x))}{B_1(x,u_2(x),\cdots,u_n(x))}dx=-\infty$.
Since the left hand side of $(\ref{3.11})$ is clearly nonnegative
and thus so is the right hand side of $(\ref{3.11})$. It follows
that $a_{\bf i0}\geq J^{-1}\cdot
\int_0^1\frac{y^{i_1}u_2(y)^{i_2}\cdots
u_n(y)^{i_n}}{B_1(y,u_2(y),\cdots,u_n(y))}\cdot
e^{\int_0^y\frac{A(x,u_2(x),\cdots,u_n(x))}{B_1(x,u_2(x),\cdots,u_n(x))}dx}dy$.
Therefore, in order to prove (ii), we only need to show that
\begin{eqnarray*}
a_{\bf i0}\leq J^{-1}\cdot \int_0^1\frac{y^{i_1}u_2(y)^{i_2}\cdots
u_n(y)^{i_n}}{B_1(y,u_2(y),\cdots,u_n(y))}\cdot
e^{\int_0^y\frac{A(x,u_2(x),\cdots,u_n(x))}{B_1(x,u_2(x),\cdots,u_n(x))}dx}dy.
\end{eqnarray*}

Take $x^*_{\bf j}=J^{-1}\cdot
\int_0^1\frac{y^{j_1}u_2(y)^{j_2}\cdots
u_n(y)^{j_n}}{B_1(y,u_2(y),\cdots,u_n(y))}\cdot
e^{\int_0^y\frac{A(x,u_2(x),\cdots,u_n(x))}{B_1(x,u_2(x),\cdots,u_n(x))}dx}dy,
{\bf j}\neq {\bf 0}$, then for any ${\bf i}\neq {\bf 0}$,

\begin{eqnarray*}
& &\sum_{\bf j\neq 0}q_{\bf ij}x_{\bf j}^*+q_{\bf
i0}\\
&=&J^{-1}\cdot \int_0^1\frac{\sum_{{\bf j} \in {\bf
Z}_+^n}q_{\bf ij}\cdot y^{j_1}u_2(y)^{j_2}\cdots
u_n(y)^{j_n}}{B_1(y,u_2(y),\cdots,u_n(y))}\cdot
e^{\int_0^y\frac{A(x,u_2(x),\cdots,u_n(x))}{B_1(x,u_2(x),\cdots,u_n(x))}dx}dy\\
&=&J^{-1}\cdot \int_0^1\sum_{k=1}^\infty
i_ky^{i_1}u_2(y)^{i_2}\cdots u_k(y)^{{i_k}-1}u'_k(y) \cdots
u_n(y)^{j_n}\cdot
e^{\int_0^y\frac{A(x,u_2(x),\cdots,u_n(x))}{B_1(x,u_2(x),\cdots,u_n(x))}dx}dy\\
& &+J^{-1}\cdot \int_0^1\frac{y^{j_1}u_2(y)^{j_2}\cdots
u_n(y)^{j_n}A(y,u_2(y),\cdots,u_n(y))}{B_1(y,u_2(y),\cdots,u_n(y))}\cdot
e^{\int_0^y\frac{A(x,u_2(x),\cdots,u_n(x))}{B_1(x,u_2(x),\cdots,u_n(x))}dx}dy\\
&=&0.
\end{eqnarray*}
Here the last equality follows from applying the method of integration by parts. Hence
$(x_{\bf j}^*;{\bf j}\neq {\bf 0})$ is a solution of the equation
\begin{eqnarray*}
\sum_{\bf j\neq 0}q_{\bf ij}x_{\bf j}^*+q_{\bf
i0}=0, \ \ 0\leq x_{\bf j}^*\leq 1, {\bf i}\neq {\bf 0}.
\end{eqnarray*}
By Lemma 3.2 in Li and Chen~\cite{LC2006}, we then have $a_{\bf
i0}\leq x_{\bf i}^*({\bf i}\neq {\bf 0})$ since $a_{\bf i0}$ is the
minimal solution of the above equation. (ii) is proved.
\par
Finally, we consider (iii). Suppose that $0<\rho(1,\cdots,1)\leq
+\infty$. By Lemma 2.1, we know that the equation $(\ref{2.1})$ has
a root $(q_1,u_2(q_1),\cdots,u_n(q_1))\in (0,1)^n$ and $G_{\bf
i}(s)<\infty$ for all $s\in [0,q_1]$. Similarly as in the above, we
only need to show that
\begin{eqnarray*}
a_{\bf i0}&\leq& \lim_{s\uparrow q_1}[\int_0^s\frac{1}{B_1(y,u_2(y),\cdots,u_n(y))}\cdot
e^{\int_0^y\frac{A(x,u_2(x),\cdots,u_n(x))}{B_1(x,u_2(x),\cdots,u_n(x))}dx}dy]^{-1}\cdot\\
&   &\int_0^s\frac{y^{j_1}u_2(y)^{j_2}\cdots
u_n(y)^{j_n}}{B_1(y,u_2(y),\cdots,u_n(y))}\cdot
e^{\int_0^y\frac{A(x,u_2(x),\cdots,u_n(x))}{B_1(x,u_2(x),\cdots,u_n(x))}dx}dy.
\end{eqnarray*}
Since $-a_{\bf 0}>0$, we know by Lemma $\ref{le 2.1}$ that $\int_0^{q_1}\frac{A(x,u_2(x),\cdots,u_n(x))}{B_1(x,u_2(x),
\cdots,u_n(x))}dx=-\infty$ and
$$
\int_0^y\frac{A(x,u_2(x),\cdots,u_n(x))}{B_1(x,u_2(x),\cdots,u_n(x))}dx\leq
\int_0^y\frac{A(q_1,q_2,\cdots,q_n)}{B_1(x,q_2,\cdots,q_n)}dx \leq
C\ln\frac{q_1-y}{q_1}
$$ for $y\in [0,q_1)$, where $C$ is a positive constant.
Hence the integral $\int_0^{q_1}\frac{1}{B_1(y,u_2(y),\cdots,u_n(y))}\cdot
e^{\int_0^y\frac{A(x,u_2(x),\cdots,u_n(x))}{B_1(x,u_2(x),\cdots,u_n(x))}dx}dy$, denoted by $D$, is convergent.
Now by letting
$$
y_{\bf j}^*=D^{-1}\cdot \int_0^{q_1}\frac{1}{B_1(y,u_2(y),\cdots,u_n(y))}\cdot
e^{\int_0^y\frac{A(x,u_2(x),\cdots,u_n(x))}{B_1(x,u_2(x),\cdots,u_n(x))}dx}dy, \ \ {\bf j}\neq {\bf 0},
$$ we may prove similarly as above that $(y_{\bf j}^*;{\bf j}\neq {\bf 0})$ is a solution of the equation
\begin{eqnarray*}
\sum_{\bf j\neq 0}q_{\bf ij}x_{\bf j}^*+q_{\bf
i0}=0, \ \ 0\leq x_{\bf j}^*\leq 1, {\bf i}\neq {\bf 0}.
\end{eqnarray*}
Again by Lemma 3.2 in Li and Chen~\cite{LC2006}, we have $a_{\bf
i0}\leq y_{\bf i}^*({\bf i}\neq {\bf 0})$ which proves the first
equality in $(\ref{3.5})$. The last two assertions in $(\ref{3.5})$
are obvious.\hfill$\Box$
\end{proof}

\par
By Theorem $\ref{th3.2}$, we see that when immigration occurs then
the condition $\rho(1,\cdots,1)\leq 0$(i.e., the death rate is
greater than or equal to the mean birth rate) is no longer
sufficient, though still necessary, for the process to be finally
extinct. A further condition $J=\infty$, which reflects the effect
of immigration, is necessary to guarantee the final extinction.

\par
Having obtained the extinction probability, we are now in a position to consider the extinction time. We shall use
$E_{\bf i}[\tau_0]$ to denote the mean extinction time when the process starts at state ${\bf i}\neq {\bf 0}$.
\par

\begin{theorem}
\label{th3.3} Suppose that $\rho(1,\cdots,1)\leq 0$ and $J=\infty$
where $J$ is given in $(\ref{3.8a})$ and thus the extinction
probability $a_{\bf i0}=1({\bf i}\neq{\bf 0})$. Then for any ${\bf
i}\neq {\bf 0}$, $E_{\bf i}[\tau_0]<\infty$ if and only if
\begin{eqnarray}\label{3.12}
\int_0^1\frac{1-yu_2(y)\cdots u_n(y)-A(y,u_2(y),\cdots,u_n(y))}{B_1(y,u_2(y),\cdots,u_n(y))}dy<\infty
\end{eqnarray}
and in which case, $E_{\bf i}[\tau_0]$ is given by
\begin{eqnarray}\label{3.13}
E_{\bf i}[\tau_0]=\int_0^1\frac{1-y^{i_1}u_2(y)^{i_2}\cdots u_n(y)^{i_n}}{B_1(y,u_2(y),\cdots,u_n(y))}\cdot
e^{-\int_y^1\frac{A(x,u_2(x),\cdots,u_n(x))}{B_1(x,u_2(x),\cdots,u_n(x))}dx}dy.
\end{eqnarray}
\end{theorem}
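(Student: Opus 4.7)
The plan is to identify $E_{\bf i}[\tau_0]$ with $G_{\bf i}(1)$ for the generating function $G_{\bf i}$ of (3.8), derive a first-order linear ODE for $G_{\bf i}$ along the characteristic curve $u\mapsto(u,u_2(u),\ldots,u_n(u))$ and solve it explicitly to obtain (3.13); then I would extract the finiteness criterion (3.12).

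For the formula: since $h_{\bf 0}=0$ makes ${\bf 0}$ absorbing, $P_{\bf i}(\tau_0>t)=1-p_{\bf i0}(t)=\sum_{{\bf j}\neq{\bf 0}}p_{\bf ij}(t)$, whence $E_{\bf i}[\tau_0]=\sum_{{\bf j}\neq{\bf 0}}\int_0^\infty p_{\bf ij}(t)\,dt$. Because $\rho(1,\ldots,1)\le 0$ forces ${\bf q}={\bf 1}$, Theorem~\ref{th3.1}(iii) gives $u_k(1)=q_k=1$; evaluating (3.8) at $u=1$ then yields $E_{\bf i}[\tau_0]=G_{\bf i}(1)=\lim_{u\uparrow1}G_{\bf i}(u)\in[0,\infty]$, with $G_{\bf i}(u)<\infty$ for $u<1$ by Lemma~\ref{le3.1}. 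Substituting $a_{\bf i0}=1$ into (3.10) produces the linear ODE
$$
B_1\bigl(u,u_2(u),\ldots,u_n(u)\bigr)\,G_{\bf i}'(u)+A\bigl(u,u_2(u),\ldots,u_n(u)\bigr)\,G_{\bf i}(u)=1-u^{i_1}u_2(u)^{i_2}\cdots u_n(u)^{i_n}.
$$
With the integrating factor $\mu(u)=\exp\bigl(-\int_u^1 A/B_1\,dx\bigr)$ (so $\mu(1)=1$), and observing that $G_{\bf i}(0)=0$ because every summand of (3.8) carries some factor $u_k(0)=0$, integration from $0$ to $1$ yields precisely (3.13).

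For the iff, I would use the fact that $-A/B_1>0$ is continuous on $(0,1)$ and bounded at $y=0$, so $\int_0^1(-A)/B_1\,dy=\infty$ corresponds exactly to non-integrability near $y=1$, in which case $\mu(y)\equiv+\infty$ on $[0,1)$. In Case (a) $\int_0^1(-A)/B_1\,dy<\infty$, $\mu$ is bounded on $[0,1]$, so the exponential factor in (3.13) lies between two positive constants and (3.13) is finite iff $\int_0^1 N_{\bf i}(y)/B_1(y,u_2(y),\ldots,u_n(y))\,dy<\infty$, where $N_{\bf i}(y):=1-y^{i_1}u_2(y)^{i_2}\cdots u_n(y)^{i_n}$; the assumption (A-2) together with (3.1) forces the right derivatives $u_k'(1-)$ to be finite, so $N_{\bf i}$ and $N_{\bf 1}(y):=1-yu_2(y)\cdots u_n(y)$ are comparable on $[0,1]$, and the condition collapses to $\int_0^1 N_{\bf 1}/B_1\,dy<\infty$; combining with $\int_0^1(-A)/B_1\,dy<\infty$ gives (3.12). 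In Case (b) $\int_0^1(-A)/B_1\,dy=\infty$, (3.12) fails automatically; setting $\tilde{\mu}(y):=\exp\bigl(\int_0^y A/B_1\,dx\bigr)$ and repeating the integrating-factor calculation gives
$$
G_{\bf i}(u)=\tilde{\mu}(u)^{-1}\int_0^u\tilde{\mu}(y)\,\frac{N_{\bf i}(y)}{B_1(y,u_2(y),\ldots,u_n(y))}\,dy,
$$
and as $u\uparrow 1$ the prefactor $\tilde{\mu}(u)\downarrow 0$ while the integral tends monotonically to a strictly positive limit (the integrand is strictly positive on $(0,1)$), forcing $G_{\bf i}(1)=+\infty=E_{\bf i}[\tau_0]$, in agreement with the failure of (3.12).

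The main obstacle is the comparability $N_{\bf i}\sim N_{\bf 1}$ in Case (a), which hinges on finiteness of $u_k'(1-)$ and therefore on careful handling of the L'Hopital indeterminacy in (3.1) at $u=1$ (both numerator and denominator vanish as $u\uparrow 1$). Positive regularity of $G(1,\ldots,1)$ in (A-3) combined with (A-2) ensures that the linearization of the system $B_k=0$ at $(1,\ldots,1)$ admits a common positive right eigenvector, from which the finite positive limits $u_k'(1-)$ follow by standard Perron--Frobenius arguments; this is where all three standing assumptions (A-1)--(A-3) come together.
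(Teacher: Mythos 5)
Your derivation of (3.13) is essentially the paper's own route: the paper simply sets $a_{\bf i0}=1$ in its identity (3.11) (which is your integrating-factor solution of the ODE (3.10) along the characteristic curve, with the same boundary value $G_{\bf i}(0)=0$) and then lets $u\uparrow 1$ with monotone convergence and honesty to identify the limit with $E_{\bf i}[\tau_0]=\sum_{{\bf j}\neq{\bf 0}}\int_0^\infty p_{\bf ij}(t)\,dt$. Where you genuinely add content is the equivalence with (3.12): the paper dismisses this as ``fairly easy'' with no argument, whereas your split into the cases $\int_0^1(-A)/B_1\,dy<\infty$ and $=\infty$ is the correct and complete skeleton. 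The one direction that is not free is showing $\int_0^1 N_{\bf i}/B_1\,dy<\infty\Rightarrow\int_0^1 N_{\bf 1}/B_1\,dy<\infty$ (the inequality $N_{\bf i}\leq n(\max_k i_k)N_{\bf 1}$ gives the other direction for nothing), and you correctly locate the crux there: one needs $\limsup_{y\uparrow1}(1-u_k(y))/(1-y)<\infty$ for every $k$, i.e.\ that the characteristic curve approaches ${\bf 1}$ tangentially to the strictly positive Perron direction of $B_{ij}(1,\cdots,1)$. That step is only sketched in your write-up (and is nowhere in the paper), but the Perron--Frobenius argument you indicate, using (A-2) and (A-3), does close it; an alternative that avoids the tangency analysis entirely is to note that by irreducibility of the nonzero states $E_{\bf i}[\tau_0]<\infty$ for one ${\bf i}\neq{\bf 0}$ iff for all ${\bf i}$, and then test only ${\bf i}={\bf 1}$, for which $N_{\bf 1}$ appears verbatim.
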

\par
\begin{proof}
It follows from $(\ref{3.11})$ that
\begin{eqnarray*}
& &\sum_{{\bf j}\neq {\bf 0}}(\int_0^\infty p_{\bf ij}(t)dt)\cdot u^{j_1}u_2(u)^{j_2}\cdots u_n(u)^{j_n}\\
=& &\int_0^u\frac{1-y^{i_1}u_2(y)^{i_2}\cdots
u_n(y)^{i_n}}{B_1(y,u_2(y),\cdots,u_n(y))}\cdot
e^{-\int_y^s\frac{A(x,u_2(x),\cdots,u_n(x))}{B_1(x,u_2(x),\cdots,u_n(x))}dx}dy
\end{eqnarray*}
Letting $s\uparrow 1$, using the honesty condition and applying the Monotone Convergence Theorem then yields
\begin{eqnarray*}
E_{\bf i}[\tau_0]&=&\int_0^\infty(1-p_{\bf i0}(t))dt\\
&=&\sum_{{\bf j}\in {\bf Z}_+^n\setminus {\bf 0}}\int_0^\infty p_{\bf ij}(t)dt\\
&=&\int_0^1\frac{1-y^{i_1}u_2(y)^{i_2}\cdots
u_n(y)^{i_n}}{B_1(y,u_2(y),\cdots,u_n(y))}\cdot
e^{-\int_y^1\frac{A(x,u_2(x),\cdots,u_n(x))}{B_1(x,u_2(x),\cdots,u_n(x))}dx}dy
\end{eqnarray*}
Thus $(\ref{3.13})$ is proved. Finally, it is fairly easy to show
that the expression in $(\ref{3.13})$ is finite if and only if
$(\ref{3.12})$ holds. \hfill $\Box$
\end{proof}

\par
\vspace{5mm}
 \setcounter{section}{4}
 \setcounter{equation}{0}
 \setcounter{theorem}{0}
 \setcounter{lemma}{0}
 \setcounter{corollary}{0}
\noindent {\large \bf 4. Recurrence Property }
 \vspace{3mm}
 \par

 Unlike the previous section, in this section we shall always assume that $h_{\bf 0}<0$ and thus ${\bf 0}$ is no
 longer an absorbing state. For this case, the most important problem is the recurrence property.
 We shall assume that the $n$TBI $q$-matrix $Q$ is regular and thus the $n$TBIP is honest.
\par

\begin{theorem}\label{th4.1}
\par
The $n$TBIP is recurrent if and only if $\rho(1,\cdots,1)\leq 0$ and
$J=+\infty$, where $J$ is given in $(\ref{3.8a})$.
\end{theorem}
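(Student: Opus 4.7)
The plan is to reduce the question of recurrence of the entire $n$TBIP to computing the probability of ever returning to state ${\bf 0}$, and then to express that return probability in terms of the absorptive extinction probabilities $a_{{\bf j}{\bf 0}}$ obtained in Theorem~\ref{th3.2}. First I would note that because $h_{\bf 0}<0$ with $\sum_{{\bf j}\neq{\bf 0}}h_{\bf j}=-h_{\bf 0}>0$, state ${\bf 0}$ leads to at least one nonzero state, and together with the irreducibility of the nonzero states (used throughout Section~3) this gives that the chain on ${\bf Z}_+^n$ is irreducible. Hence recurrence of the chain is equivalent to recurrence of state ${\bf 0}$.

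Next I would carry out a first-jump analysis at ${\bf 0}$. The chain leaves ${\bf 0}$ after an exponential holding time of rate $-h_{\bf 0}$ and jumps to ${\bf j}\neq{\bf 0}$ with probability $h_{\bf j}/(-h_{\bf 0})$. The key observation is that the rows of $Q$ indexed by nonzero states do not involve $\{h_{\bf j}\}$, so starting from any ${\bf j}\neq{\bf 0}$ the law of the chain up to its first visit to ${\bf 0}$ coincides exactly with that of the minimal absorptive process $\tilde{P}(t)$ of Section~3. Consequently the probability that the chain ever returns to ${\bf 0}$ is
\begin{eqnarray*}
f_{{\bf 0}{\bf 0}}=\sum_{{\bf j}\neq{\bf 0}}\frac{h_{\bf j}}{-h_{\bf 0}}\cdot a_{{\bf j}{\bf 0}}.
\end{eqnarray*}

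To conclude, I would apply Theorem~\ref{th3.2} directly. If $\rho(1,\cdots,1)\leq 0$ and $J=+\infty$, then $a_{{\bf j}{\bf 0}}=1$ for every ${\bf j}\neq{\bf 0}$, and since the weights $h_{\bf j}/(-h_{\bf 0})$ sum to one we obtain $f_{{\bf 0}{\bf 0}}=1$, so ${\bf 0}$ is recurrent. Conversely, if either $\rho(1,\cdots,1)>0$ or $J<+\infty$, parts (ii) and (iii) of Theorem~\ref{th3.2} yield $a_{{\bf j}{\bf 0}}<1$ for every ${\bf j}\neq{\bf 0}$; since at least one $h_{\bf j}>0$, the weighted sum $f_{{\bf 0}{\bf 0}}$ is strictly less than one and ${\bf 0}$ is transient. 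This gives the equivalence claimed.

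The step I expect to be the main obstacle is justifying the first-jump decomposition rigorously, since this requires knowing that the $n$TBIP is truly determined by its embedded jump chain and holding times, with no probability mass lost to explosion. This is exactly where the uniqueness and honesty of $P(t)$ provided by Theorem~\ref{th 2.1} (and the standing assumption in this section that $Q$ is regular) enter. An alternative, purely analytic route would be to use Lemma~\ref{le 2.3} to derive the first-entrance identity $p_{{\bf 0}{\bf 0}}(t)=e^{h_{\bf 0}t}+\int_0^t e^{h_{\bf 0}s}\sum_{{\bf j}\neq{\bf 0}}h_{\bf j}\,\tilde{p}_{{\bf j}{\bf 0}}(t-s)\,ds$ and to decide whether $\int_0^\infty p_{{\bf 0}{\bf 0}}(t)\,dt=\infty$ via a Laplace-transform computation using the explicit extinction formulas in Theorem~\ref{th3.2}; this avoids sample-path arguments at the cost of a heavier calculation.
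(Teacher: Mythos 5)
Your argument is correct, and it reaches the same pivot point as the paper — namely that recurrence is equivalent to the absorptive extinction probabilities $a_{{\bf j}{\bf 0}}$ being identically $1$, which Theorem~\ref{th3.2} then converts into the condition $\rho(1,\cdots,1)\leq 0$ and $J=+\infty$ — but you get there by a different and somewhat leaner route. The paper works entirely at the level of the embedded jump chain: for the ``if'' direction it invokes Chen's criterion (Lemma 4.46 of \cite{C92}) that recurrence holds iff the minimal solution of the first-passage system $x_{\bf i}=\sum_{{\bf j}\neq{\bf 0}}\tilde{\pi}_{{\bf i}{\bf j}}x_{\bf j}+\tilde{\pi}_{{\bf i}{\bf 0}}$ is identically $1$, and identifies that minimal solution with $a_{{\bf i}{\bf 0}}$ via Lemma 3.2 of Li and Chen \cite{LC2006}; for the ``only if'' direction it separately constructs explicit non-constant bounded solutions of the superharmonic inequality $(\ref{4.2})$ (using $1-{\bf q}^{\bf i}$ when $\rho(1,\cdots,1)>0$ and the integral expression $x^*_{\bf j}$ from Theorem~\ref{th3.2} when $J<+\infty$). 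Your first-jump decomposition $f_{{\bf 0}{\bf 0}}=\sum_{{\bf j}\neq{\bf 0}}\frac{h_{\bf j}}{-h_{\bf 0}}a_{{\bf j}{\bf 0}}$ replaces both of these with a single sample-path identity, and in particular it makes the converse direction immediate from parts (ii) and (iii) of Theorem~\ref{th3.2} without any explicit construction. What you pay for this is exactly what you flag yourself: the strong-Markov/first-jump step needs the process to be the honest minimal $Q$-process with no mass lost to explosion (Theorem~\ref{th 2.1} plus the section's regularity assumption), and it needs the observation that the rows of $Q$ off state ${\bf 0}$ coincide with those of $\tilde{Q}$, so that $P_{\bf j}(\tau_{\bf 0}<\infty)=a_{{\bf j}{\bf 0}}$; both justifications are available here. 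One small point to make explicit: irreducibility of the whole chain (so that recurrence of ${\bf 0}$ decides recurrence of the process) uses both $h_{\bf 0}<0$ and the standing convention $B_1(0,\cdots,0)>0$ together with the irreducibility of the nonzero states, which is consistent with the paper's implicit assumptions. With those points in place your proof is complete and, if anything, tidier than the one in the paper.
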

\par
\begin{proof} We first prove the ``if" part. By Lemma 4.46 of Chen~\cite{C92}, it is sufficient to prove that the
minimal solution of the equation
\begin{eqnarray}\label{4.1}
x_{\bf i}=\sum_{\bf j\neq \bf 0}\tilde{\pi}_{\bf ij}x_{\bf
j}+\tilde{\pi}_{\bf i0},\ \ {\bf i}\in {\bf Z}_+^n,
\end{eqnarray}
equals $1$ identically, where $(\tilde{\pi}_{\bf ij};{\bf i},{\bf
j}\in {\bf Z}_+^n)$ denote the transition probability of the
embedding chain of the $n$TBIP. Denote
$$
 \pi_{\bf ij}=\begin{cases}\delta_{\bf 0j},&  \mbox{if \ ${\bf i}={\bf 0},{\bf j}\in {\bf Z}_+^n$}\\
 \tilde{\pi}_{\bf ij},&  \mbox{if \ ${\bf i}\neq{\bf 0},{\bf j}\in {\bf Z}_+^n$}.
\end{cases}
$$
If $(x_{\bf i}^*;{\bf i}\in {\bf Z}_+^n)$ is the minimal solution of
the (\ref{4.1}), then it is easy to see that $(x_{\bf i}^*;{\bf
i}\neq{\bf 0})$ is a solution of the equation
$$
x_{\bf i}=\sum_{\bf j\neq \bf 0}\pi_{\bf ij}x_{\bf j}+{\pi}_{\bf
i0},\ \ 0\leq x_{\bf i}\leq1,{\bf i}\neq{\bf 0}.
$$
Indeed, by Lemma 3.2 of Li and Chen~\cite{LC2006} and
Theorem~\ref{th3.2} we immediately see that $x_{\bf i}^*=a_{\bf
i0}=1({\bf i}\neq {\bf 0})$ and hence $x_{\bf i}^*=1({\bf i}\in {\bf
Z}_+^n)$. Therefore, by the Anderson$^{[15]}$, we know that the
$n$TBIP is recurrent.
\par
We now prove the ``only if" part. Assume that either $\rho(1,\cdots,1)\leq 0$ together $J<+\infty$ or
$0<\rho(1,\cdots,1)\leq +\infty$. We shall prove that the process is transient. To this end, it is sufficient
to show that the equation
\begin{eqnarray*}
\sum_{{\bf j}\in {\bf Z}_+^n}\pi_{\bf ij}x_{\bf j}=x_{\bf i}, \ \ {\bf i}\neq {\bf 0},
\end{eqnarray*}
has a non-constant bounded solution. By the Comparison Lemma, we only need to show that the inequality
\begin{eqnarray}\label{4.2}
\sum_{{\bf j}\in {\bf Z}_+^n}\pi_{\bf ij}x_{\bf j}\geq x_{\bf i}, \ \ {\bf i}\neq {\bf 0},
\end{eqnarray}
has a non-constant bounded solution. Now if $0<\rho(1,\cdots,1)\leq
\infty$, By Lemma~\ref{le 2.2}, we know that the equation
$(\ref{2.1})$ has a root ${\bf q}=(q_1,q_2,\cdots,q_n)\in (0,1)^n$.
Let $x_{\bf i}=1-{\bf q}^{\bf i}$, then $(x_{\bf i}; {\bf i}\in {\bf
Z}_+^n)$ is a non-constant bounded solution of $(\ref{4.2})$.
Indeed, for ${\bf i}\neq {\bf 0}$,
\begin{eqnarray*}
&\;& \sum_{{\bf j}\in {\bf Z}_+^n}\pi_{\bf ij}x_{\bf
j}\\
&=&\frac{1}{-\sum_{k=1}^ni_kb_{{\bf e}_k}^{(k)}-a_{\bf 0}}[\sum_{\bf
j\neq i}\sum_{k=1}^ni_kb_{{\bf j-i}+e_k}^{(k)}(1-{\bf q}^{{\bf j}})
+\sum_{\bf j\neq i}a_{\bf j-i}(1-{\bf q}^{\bf j})]\\
&=&\frac{1}{-\sum_{k=1}^ni_kb_{{\bf e}_k}^{(k)}-a_{\bf
0}}[-\sum_{k=1}^ni_kb_{e_k}^{(k)}-a_{\bf
0}+\sum_{k=1}^ni_kb^{(k)}_{e_k}{\bf
q}^{{\bf i}}-{\bf q}^{\bf i}(A({\bf q})-a_{\bf 0})]\\
&=&1-{\bf q}^{\bf i}+\frac{-{\bf q}^{\bf i}A({\bf q})}
{-\sum\limits_{k=1}^ni_kb_{{\bf e}_k}^{(k)}-a_{\bf 0}}\\
&\geq&1-{\bf q}^{\bf i}=x_{\bf i}.
\end{eqnarray*}

If $\rho(1,\cdots,1)\leq 0$ and $J=+\infty$, then by letting
\begin{eqnarray*}
x_{\bf j}^*=J^{-1}\int_0^1\frac{y^{i_1}u_2(y)^{i_2}\cdots u_n(y)^{i_n}}{B_1(y,u_2(y),\cdots,u_n(y))}\cdot
e^{\int_0^y\frac{A(x,u_2(x),\cdots,u_n(x))}{B_1(x,u_2(x),\cdots,u_n(x))}dx}dy, \ \ {\bf j}\in {\bf Z}_+^n,
\end{eqnarray*}
we may easily verify as in the proof of Theorem~\ref{th3.2} that
$(x_{\bf j}^*; {\bf j}\in {\bf Z}_+^n)$ is a non-constant bounded
solution of $(\ref{4.2})$. \hfill$\Box$
\end{proof}

\par
\begin{theorem}\label{th4.2}
The $n$TBIP is positive recurrent (i.e., ergodic) if and only if
$\rho(1,\cdots,1)\leq 0$ and
\begin{eqnarray}\label{4.3}
\int_0^1\frac{-A(y,u_2(y),\cdots,u_n(y))-H(y,u_2(y),\cdots,u_n(y))}{B_1(y,u_2(y),\cdots,u_n(y))}dy<\infty.
\end{eqnarray}
Moreover, if $\rho(1,\cdots,1)<0$ and
$\sum_{j=1}^n[A_j(1,\cdots,1)+H_j(1,\cdots,1)]<\infty$, then the
process is exponentially ergodic.
\end{theorem}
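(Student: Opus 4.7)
The plan is to characterize positive recurrence via the stationary equation $\pi Q=0$, reduce the resulting first order PDE for its generating function to a linear ODE along the characteristic of Theorem~\ref{th3.1}, and then handle exponential ergodicity through a Foster--Lyapunov drift built from a Perron--Frobenius eigenvector of $B({\bf 1})$.

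Necessity of $\rho(1,\ldots,1)\leq 0$ is immediate, since positive recurrence implies recurrence and Theorem~\ref{th4.1} applies. For the equivalence itself I would assume a stationary probability $\pi=(\pi_{\bf j})$ with generating function $\Pi(u_1,\ldots,u_n)=\sum_{\bf j}\pi_{\bf j}u_1^{j_1}\cdots u_n^{j_n}$, multiply $\pi Q=0$ by $u_1^{j_1}\cdots u_n^{j_n}$ and sum in the manner of Lemma~\ref{le 2.3} to obtain the PDE
$$
\sum_{k=1}^{n}B_k(u_1,\ldots,u_n)\frac{\partial\Pi}{\partial u_k}+A(u_1,\ldots,u_n)\Pi=\pi_{\bf 0}\bigl[A(u_1,\ldots,u_n)-H(u_1,\ldots,u_n)\bigr].
$$
Restricting to the characteristic $(u,u_2(u),\ldots,u_n(u))$ from Theorem~\ref{th3.1} and writing $\tilde\Pi(u):=\Pi(u,u_2(u),\ldots,u_n(u))$, the relations $u_k'(u)=B_k/B_1$ collapse the PDE to the linear ODE $B_1\tilde\Pi'+A\tilde\Pi=\pi_{\bf 0}(A-H)$ with $\tilde\Pi(0)=\pi_{\bf 0}$ (since $u_k(0)=0$). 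Solving by the integrating factor $e^{F(u)}$ with $F(u):=\int_0^u A/B_1\,dx$, and using the identity $\int_0^u(A/B_1)e^{F(y)}dy=e^{F(u)}-1$, the solution takes the compact form
$$
\tilde\Pi(u)=\pi_{\bf 0}\left[1+e^{-F(u)}\int_0^u\frac{-H(y,u_2(y),\ldots,u_n(y))}{B_1(y,u_2(y),\ldots,u_n(y))}\,e^{F(y)}\,dy\right].
$$

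Since $u_k(1)=1$ when $\rho(1,\ldots,1)\leq 0$, ergodicity is equivalent to $\tilde\Pi(1)<\infty$. A dichotomy on $F(1)$ then yields (\ref{4.3}): if $F(1)=-\infty$, the prefactor $e^{-F(u)}$ blows up while the integral approaches a strictly positive limit (because $H\not\equiv 0$ on $[0,1)^n$ when $h_{\bf 0}<0$), forcing $\tilde\Pi(1)=+\infty$; if $F(1)>-\infty$, then $e^{F(y)}\in[e^{F(1)},1]$ is pinched away from $0$, so $\tilde\Pi(1)<\infty$ is equivalent to $\int_0^1(-H)/B_1\,dy<\infty$. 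Combined with the identity $-F(1)=\int_0^1(-A)/B_1\,dy$, this gives precisely (\ref{4.3}). For the reverse direction I would fix $\pi_{\bf 0}$ by the normalization $\tilde\Pi(1)=1$ and invoke uniqueness (up to scaling) of the invariant measure of the recurrent irreducible chain guaranteed by Theorem~\ref{th4.1}.

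For exponential ergodicity under strict subcriticality $\rho(1,\ldots,1)<0$, positive regularity of $G({\bf 1})$ combined with nonnegativity of the off-diagonal entries of $B({\bf 1})$ produces, via Perron--Frobenius, a strictly positive right eigenvector $\alpha$ with $B({\bf 1})\alpha=-\lambda\alpha$ and $\lambda>0$. Setting $V({\bf i}):=\langle\alpha,{\bf i}\rangle+1$ and using $\sum_{\bf m}b^{(k)}_{\bf m}=0$ to cancel the diagonal correction, a direct computation yields $QV({\bf i})=-\lambda V({\bf i})+(\lambda+\mu)$ for ${\bf i}\neq{\bf 0}$ with $\mu=\sum_l\alpha_l A_l({\bf 1})<\infty$, and $QV({\bf 0})=\sum_l\alpha_l H_l({\bf 1})<\infty$ by the moment hypothesis. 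This produces a geometric drift $QV\leq-\beta V+b\chi_C$ on the finite (hence petite) sublevel set $C=\{V\leq(\lambda+\mu)/(\lambda-\beta)\}$ for any $\beta\in(0,\lambda)$, and the standard Meyn--Tweedie criterion delivers exponential ergodicity. The most delicate step in the whole argument is the boundary analysis at $u=1$, where both $A$ and $B_1$ vanish and their relative orders decide the finiteness of $F(1)$; this is what converts the weighted integral produced by the ODE into the clean single-integral criterion (\ref{4.3}). A secondary subtlety is the assertion that the one-variable restriction $\tilde\Pi(u)$ recovers the whole stationary measure, which I would resolve by appealing to uniqueness of invariant measures for a recurrent irreducible continuous-time Markov chain rather than by directly inverting the twisted monomial map $(j_1,\ldots,j_n)\mapsto u^{j_1}u_2(u)^{j_2}\cdots u_n(u)^{j_n}$.
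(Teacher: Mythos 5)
Your necessity argument and your treatment of exponential ergodicity essentially coincide with the paper's: the paper likewise lets $t\to\infty$ in (2.2) to obtain the stationary PDE, collapses it along the characteristic of Theorem~\ref{th3.1} to get exactly your formula for $\tilde\Pi$ (its (4.5)), and then argues the dichotomy on $\int_0^1 A/B_1$; for exponential ergodicity it uses the same drift function $f_{\bf i}=\sum_k i_kx_k$ built from a positive eigenvector of $B(1,\ldots,1)$ associated with $\rho(1,\ldots,1)<0$, verifying Corollary 4.49 of Chen rather than citing Meyn--Tweedie. Where you genuinely diverge is the sufficiency half of the positive-recurrence criterion. The paper never passes through the invariant measure there: it exhibits the explicit test function
$y_{\bf j}=e^{-\int_0^1 A/B_1\,dx}\int_0^1\frac{1-y^{j_1}u_2(y)^{j_2}\cdots u_n(y)^{j_n}}{B_1(y,u_2(y),\ldots,u_n(y))}\,e^{\int_0^y A/B_1\,dx}\,dy$,
checks $\sum_{\bf j}q_{\bf ij}y_{\bf j}=-1$ for ${\bf i}\neq{\bf 0}$ by integration by parts and $\sum_{{\bf j}\neq{\bf 0}}q_{{\bf 0}{\bf j}}y_{\bf j}<\infty$ from (4.3), and invokes Chen's mean-return-time criterion. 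The advantage is that each $y_{\bf j}$ is a finite integral by construction, so no convergence question about a generating function ever arises.

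Two points in your sufficiency route need to be filled in. First, to invoke ``the recurrent chain guaranteed by Theorem~\ref{th4.1}'' you must check that $\rho(1,\ldots,1)\leq 0$ together with (4.3) implies $J=+\infty$; this is true --- (4.3) forces $\int_0^1(-A)/B_1<\infty$, so $e^{\int_0^yA/B_1}$ is bounded away from $0$ and $J\geq c\int_0^1 dy/B_1=\infty$ since $B_1(1,\ldots,1)=0$ --- but it is not automatic and should be stated. Second, and more substantively, your ODE is derived for the generating function of an invariant measure that is not yet known to be summable; for a merely recurrent chain you must justify that $\sum_{\bf j}\mu_{\bf j}u^{j_1}u_2(u)^{j_2}\cdots u_n(u)^{j_n}<\infty$ for $u<1$ and that the term-by-term manipulation of $\mu Q=0$ (including the interchange needed for the $\partial/\partial u_k$ terms) is legitimate, e.g.\ by bounding $\mu_{\bf j}$ through expected occupation times of the killed process and appealing to Lemma~\ref{le3.1}. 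Once those gaps are closed your argument works, and it has the side benefit of producing Theorem~\ref{th4.3} (the formula for the equilibrium distribution) in the same stroke.
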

\par
\begin{proof}
Suppose that $\rho(1,\cdots,1)\leq 0$ and ${(\ref{4.3})}$ holds. By
Chen~\cite{C92}, in order to prove the positive recurrence, we only
need to show that the equation
\begin{eqnarray*}
\begin{cases}
\sum_{{\bf j}\in {\bf Z}_+^n}q_{\bf ij}y_{\bf j}\leq -1,\ \ {\bf i}\neq {\bf 0},\\
\sum_{{\bf j}\neq {\bf 0}}q_{\bf 0j}y_{\bf j}<\infty
\end{cases}
\end{eqnarray*}
has a finite nonnegative solution. By the irreducibility property and the fact that $\rho(1,\cdots,1)\leq 0$,
we may get from $(\ref{4.3})$ that
\begin{eqnarray*}
\int_0^1\frac{1-y^{i_1}u_2(y)^{i_2}\cdots
u_n(y)^{i_n}}{B_1(y,u_2(y),\cdots,u_n(y))}\cdot
e^{\int_0^y\frac{A(x,u_2(x),\cdots,u_n(x))}{B_1(x,u_2(x),\cdots,u_n(x))}dx}dy<\infty,\ \ {\bf j}\in {\bf Z}_+^n.
\end{eqnarray*}
Indeed, since $h_{\bf 0}<0$, it is easy to see that there exists a
positive constant $L$ such that $1-yu_2(y)\cdots u_n(y)\leq L\cdot
H(y,u_2(y),\cdots,u_n(y))$, which implies that
\begin{eqnarray*}
\int_0^1\frac{1-y^{j_1}u_2(y)^{j_2}\cdots
u_n(y)^{j_n}}{B_1(y,u_2(y),\cdots,u_n(y))}dy<\infty
\end{eqnarray*}
for any ${\bf j}\in {\bf Z}_+^n$. Now let
\begin{eqnarray*}
y_{\bf j}=e^{-\int_0^1\frac{A(x,u_2(x),\cdots,u_n(x))}{B_1(x,u_2(x),\cdots,u_n(x))}dx}\cdot\int_0^1\frac{1-y^{j_1}
u_2(y)^{j_2}\cdots
u_n(y)^{j_n}}{B_1(y,u_2(y),\cdots,u_n(y))}\cdot e^{\int_0^y\frac{A(x,u_2(x),\cdots,u_n(x))}{B_1(x,u_2(x),
\cdots,u_n(x))}dx}dy,\ \ {\bf j}\in {\bf Z}_+^n,
\end{eqnarray*}
then $0\leq y_{\bf j}<\infty ({\bf j}\in {\bf Z}_+^n)$ and for any ${\bf i}\neq {\bf 0}$,
\begin{eqnarray*}
& &\sum_{{\bf j}\in {\bf Z}_+^n}q_{\bf ij}y_{\bf j}\\
&=&e^{-\int_0^1\frac{A(x,u_2(x),\cdots,u_n(x))}{B_1(x,u_2(x),\cdots,u_n(x))}dx}\cdot\int_0^1
\frac{-\sum_{{\bf j}\in {\bf Z}_+^n}q_{\bf ij}y^{j_1}u_2(y)^{j_2}\cdots
u_n(y)^{j_n}}{B_1(y,u_2(y),\cdots,u_n(y))}\cdot e^{\int_0^y\frac{A(x,u_2(x),\cdots,u_n(x))}
{B_1(x,u_2(x),\cdots,u_n(x))}dx}dy\\
&=&e^{-\int_0^1\frac{A(x,u_2(x),\cdots,u_n(x))}{B_1(x,u_2(x),\cdots,u_n(x))}dx}\cdot\int_0^1
[\frac{-f(y)A(y,u_2(y),\cdots,u_n(y))}{B_1(y,u_2(y),\cdots,u_n(y))}-f'(y)]\cdot
e^{\int_0^y\frac{A(x,u_2(x),\cdots,u_n(x))}
{B_1(x,u_2(x),\cdots,u_n(x))}dx}dy\\
&=&-1
\end{eqnarray*}
where $f(y)=y^{i_1}u_2(y)^{i_2}\cdots u_n(y)^{i_n}$. As to ${\bf
i}={\bf 0}$, it is easy to see that
\begin{eqnarray*}
\sum_{\bf j\neq 0}q_{\bf 0j}y_{\bf j}\leq e^{-\int_0^1\frac{A(x,u_2(x),\cdots,u_n(x))}{B_1(x,u_2(x),\cdots,u_n(x))}dx}
\cdot \int_0^1\frac{-H(y,u_2(y),\cdots,u_n(y))}{B_1(y,u_2(y),\cdots,u_n(y))}dy<\infty.
\end{eqnarray*}
Therefore the $n$TBIP is positive recurrent.
\par
Conversely, suppose that the process is positive recurrent and thus possesses and equilibrium distribution
$(\pi_{\bf j}; {\bf j}\in {\bf Z}_+^n)$, that is
\begin{eqnarray*}
\lim_{t\rightarrow\infty}p_{\bf ij}(t)=\pi_{\bf j}>0\ \ \mbox{and}\ \ \sum_{{\bf j}\in {\bf Z}_+^n}\pi_{\bf j}=1.
\end{eqnarray*}
Letting $t\rightarrow \infty$ in $(\ref{2.2})$ and using the Dominated Convergence Theorem yields
\begin{eqnarray}\label{4.4}
& &H(s,u_2(s),\cdots,u_n(s))\pi_{\bf 0}+A(s,u_2(s),\cdots,u_n(s))\sum_{\bf j\neq 0}\pi_{\bf j}s^{j_1}u_2(s)^{j_2}
\cdots u_n(s)^{j_n}\nonumber\\
&+&\sum_{k=1}^n B_k(s,u_2(s),\cdots,u_n(s))\sum_{\bf j\neq 0}\pi_{\bf j}j_ks^{j_1}u_2(s)^{j_2}\cdots u_k(s)^{j_k-1}
\cdots u_n(s)^{j_n}=0,
\end{eqnarray}
for $s\in [0,1)$.
\par
Note that (\ref{4.4}) implies that $\rho(1,\cdots,1)\leq 0$. Indeed,
since $H(s,u_2(s),\cdots,u_n(s))< 0$ and $A(s,u_2(s),\cdots,u_n(s))<
0$ for all $s\in [0,1)$, which, by the proof of Theorem
$\ref{th3.1}$, implies that $\rho(1,\cdots,1)\leq 0$. Denote
$\pi(s)=\sum_{{\bf j}\in {\bf Z}_+^n}\pi_{\bf
j}s^{j_1}u_2(s)^{j_2}\cdots u_n(s)^{j_n}$, then $(\ref{4.4})$ can be
written as
\begin{eqnarray*}
&&\sum_{k=1}^nB_k(s,u_2(s),\cdots,u_n(s))\pi'(s)+A(s,u_2(s),\cdots,u_n(s))\pi(s)
+\pi_{\bf 0}[H(s,u_2(s),\cdots,u_n(s))\\
& &-A(s,u_2(s),\cdots,u_n(s))]=0,\ \ s\in [0,1),
\end{eqnarray*}
and hence
\begin{eqnarray}\label{4.5}
\pi(s)=\pi_{\bf 0}[1+\int_0^s\frac{-H(y,u_2(y),\cdots,u_n(y))}{B_1(y,u_2(y),\cdots,u_n(y))}\cdot
e^{-\int_y^s\frac{A(x,u_2(x),\cdots,u_n(x))}{B_1(x,u_2(x),\cdots,u_n(x))}dx}dy],\ \ s\in [0,1)
\end{eqnarray}
Letting $s\uparrow 1$ in $(\ref{4.5})$ yields
\begin{eqnarray*}
\lim_{s\uparrow 1}\frac{\int_0^s\frac{-H(y,u_2(y),\cdots,u_n(y))}{B_1(y,u_2(y),\cdots,u_n(y))}
\cdot e^{\int_0^y\frac{A(x,u_2(x),\cdots,u_n(x))}{B_1(x,u_2(x),\cdots,u_n(x))}dx}dy}{e^{\int_0^s\frac{A(x,u_2(x),
\cdots,u_n(x))}{B_1(x,u_2(x),\cdots,u_n(x))}dx}}<\infty.
\end{eqnarray*}
Since
\begin{eqnarray*}
& &\int_0^s\frac{-H(y,u_2(y),\cdots,u_n(y))}{B_1(y,u_2(y),\cdots,u_n(y))}\cdot e^{\int_0^y\frac{A(x,u_2(x),
\cdots,u_n(x))}{B_1(x,u_2(x),\cdots,u_n(x))}dx}dy\\
&\geq&
\int_0^{s_0}\frac{-H(y,u_2(y),\cdots,u_n(y))}{B_1(y,u_2(y),\cdots,u_n(y))}\cdot e^{\int_0^y\frac{A(x,u_2(x),
\cdots,u_n(x))}{B_1(x,u_2(x),\cdots,u_n(x))}dx}dy\\
&>&0
\end{eqnarray*}
for some $s_0\in (0,1)$ as $s\uparrow 1$, we must have $\int_0^1\frac{-A(x,u_2(x),\cdots,u_n(x))}{B_1(x,u_2(x),
\cdots,u_n(x))}dx<\infty$. Hence
\begin{eqnarray*}
& &\lim_{s\uparrow 1}\int_0^s\frac{-H(y,u_2(y),\cdots,u_n(y))}{B_1(y,u_2(y),\cdots,u_n(y))}\\
&\leq&\lim_{s\uparrow 1}\frac{\int_0^s\frac{-H(y,u_2(y),\cdots,u_n(y))}{B_1(y,u_2(y),\cdots,u_n(y))}\cdot
e^{\int_0^y\frac{A(x,u_2(x),\cdots,u_n(x))}{B_1(x,u_2(x),\cdots,u_n(x))}dx}dy}
{e^{\int_0^s\frac{A(x,u_2(x),\cdots,u_n(x))}{B_1(x,u_2(x),\cdots,u_n(x))}dx}}\\
&<&\infty.
\end{eqnarray*}
\par
Hence $(\ref{4.3})$ holds, which completes the proof of the first
part.
\par
Now suppose that $\rho(1,\cdots,1)<0$ and
$\sum_{j=1}^n[A_j(1,\cdots,1)+H_j(1,\cdots,1)]<\infty$. We prove
that the $n$TBIP is exponentially ergodic. By Corollary 4.49 of
Chen~\cite{C92}, it is sufficient to show that there exist two
 constants $C_1\geq 0$, $C_2>0$ and a finite nonnegative function $(f_{\bf i};{\bf i}\in {\bf Z}_+^n)$
 with $\lim_{{\bf i}\rightarrow \infty}f_{\bf i}=+\infty$ such that
\begin{eqnarray*}
\sum_{{\bf j}\in {\bf Z}_+^n}q_{\bf ij}(f_{\bf j}-f_{\bf i})\leq C_1-C_2f_{\bf i},\ \ {\bf i}\in {\bf Z}_+^n.
\end{eqnarray*}
Since $\rho(1,\cdots,1)$ has a positive eigenvector
$(x_1,\cdots,x_n)$, let
$$
C_1=(\sum_{j=1}^nA_j(1,\cdots,1))\vee
(\sum_{j=1}^nH_j(1,\cdots,1))\cdot \max\{x_1,\cdots,x_n\}>0,
$$
$$
 C_2=-\rho(1,\cdots,1)>0
 $$
  and $f_{\bf i}=\sum_{k=1}^ni_kx_k
\ ({\bf i}\in {\bf Z}_+^n)$.
 Then for any ${\bf i}\in {\bf Z}_+^n$,
\begin{eqnarray*}
&&\sum_{{\bf j}\in {\bf Z}_+^n}q_{\bf ij}(f_{\bf j}-f_{\bf i})\\ &=&
\sum_{{\bf j}\in {\bf Z}_+^n}\sum_{k=1}^ni_kb^{(k)}_{{\bf
j-i}+e_k}(f_{\bf j}-f_{\bf i})+\sum_{{\bf j}\in {\bf
Z}_+^n}(\delta_{\bf i,0}h_{\bf j-i}+(1-\delta_{\bf i,0})a_{\bf
j-i})(f_{\bf j}-f_{\bf i})\\
 &=&
\sum_{k=1}^n\sum_{{\bf j}\in {\bf
Z}_+^n}\sum_{l=1}^ni_kb^{(k)}_{{\bf j-i}+e_k}(j_l-i_l)x_l+\sum_{{\bf
j}\in {\bf Z}_+^n}\sum_{l=1}^n(\delta_{\bf i,0}h_{\bf
j-i}+(1-\delta_{\bf i,0})a_{\bf j-i})(j_l-i_l)x_l\\
&=&
\sum_{k=1}^ni_k\sum_{l=1}^nB_{kl}(1,\cdots,1)x_l+\sum_{k=1}^n(\delta_{\bf
i,0}H_l(1,\cdots,1)+(1-\delta_{\bf i,0})A_l(1,\cdots,1)\\
&\leq & C_1-C_2f_{\bf i}.
\end{eqnarray*}
Thus the proof is complete.\hfill $\Box$
\end{proof}

\par
\begin{theorem}\label{th4.3}
Suppose that the $n$TBIP is positive recurrent. Then its equilibrium
distribution $(\pi_{\bf j}; {\bf j}\in {\bf Z}_+^n)$ is given by
\begin{eqnarray}\label{4.6}
\pi(s)=\pi_{\bf 0}[1+
\int_0^s\frac{-H(y,u_2(y),\cdots,u_n(y))}{B_1(y,u_2(y),\cdots,u_n(y))}\cdot
e^{-\int_y^s\frac{A(x,u_2(x),\cdots,u_n(x))}{B_1(x,u_2(x),\cdots,u_n(x))}dx}dy], \ \ s\in [0,1)
\end{eqnarray}
where $\pi(s)=\sum_{{\bf j}\in {\bf Z}_+^n}\pi_{\bf j}s^{j_1}u_2(s)^{j_2}\cdots u_n(s)^{j_n}$.
\end{theorem}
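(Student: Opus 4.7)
The plan is to unwind the derivation that already appeared as formula (4.5) inside the proof of Theorem~\ref{th4.2} and present it as the statement of its own. The starting point is that positive recurrence guarantees the existence of an equilibrium distribution $(\pi_{\bf j};{\bf j}\in{\bf Z}_+^n)$ with $\lim_{t\to\infty}p_{\bf ij}(t)=\pi_{\bf j}>0$ independent of the initial state ${\bf i}$, and $\sum_{\bf j}\pi_{\bf j}=1$. My aim is to translate this into a first-order linear ODE in a single auxiliary variable $s$ and solve it explicitly.

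First I would apply Lemma~\ref{le 2.3} (the Kolmogorov forward equation in generating-function form (2.2)) and let $t\to\infty$. Since $F_{\bf i}(t,u_1,\ldots,u_n)$ and all the partial derivatives $\partial F_{\bf i}/\partial u_k$ stay bounded by Abel-type majorants for $(u_1,\ldots,u_n)\in[0,1)^n$, the Dominated Convergence Theorem lets the limits pass inside the sums. The term $\partial F_{\bf i}/\partial t$ vanishes in the limit, while $F_{\bf i}(t,u_1,\ldots,u_n)\to \sum_{\bf j}\pi_{\bf j}u_1^{j_1}\cdots u_n^{j_n}$. This yields exactly the stationary identity (4.4).

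Next, following the key idea of Theorem~\ref{th3.1}, I specialize $u_k=u_k(s)$ for $k\geq 2$ and $u_1=s$. Let
\[
\pi(s)=\sum_{{\bf j}\in{\bf Z}_+^n}\pi_{\bf j}s^{j_1}u_2(s)^{j_2}\cdots u_n(s)^{j_n},
\]
so that $\pi(0)=\pi_{\bf 0}$ since $u_k(0)=0$ for $k\geq 2$, and
\[
\pi'(s)=\sum_{{\bf j}\neq{\bf 0}}\pi_{\bf j}\sum_{k=1}^n j_k u'_k(s)\, s^{j_1}u_2(s)^{j_2}\cdots u_k(s)^{j_k-1}\cdots u_n(s)^{j_n},
\]
with the convention $u'_1(s)\equiv 1$. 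The crucial observation is that by Theorem~\ref{th3.1}, $B_k(s,u_2(s),\ldots,u_n(s))=u'_k(s)B_1(s,u_2(s),\ldots,u_n(s))$ for every $k$. Factoring $B_1$ out of the sum $\sum_k B_k(\cdot)\,\partial_k$ appearing in (4.4) therefore collapses it to $B_1(s,u_2(s),\ldots,u_n(s))\cdot\pi'(s)$, and (4.4) becomes
\[
B_1(s,u_2(s),\ldots,u_n(s))\pi'(s)+A(s,u_2(s),\ldots,u_n(s))\bigl(\pi(s)-\pi_{\bf 0}\bigr)+H(s,u_2(s),\ldots,u_n(s))\pi_{\bf 0}=0.
\]

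Finally I would solve this linear ODE for $s\in[0,1)$. Setting $\phi(s)=\pi(s)-\pi_{\bf 0}$, which satisfies $\phi(0)=0$, and multiplying by the integrating factor $\exp\!\int_0^s A(x,u_2(x),\ldots,u_n(x))/B_1(x,u_2(x),\ldots,u_n(x))\,dx$, a direct integration from $0$ to $s$ produces
\[
\phi(s)=\pi_{\bf 0}\int_0^s\frac{-H(y,u_2(y),\ldots,u_n(y))}{B_1(y,u_2(y),\ldots,u_n(y))}\exp\!\Bigl(-\int_y^s\frac{A(x,u_2(x),\ldots,u_n(x))}{B_1(x,u_2(x),\ldots,u_n(x))}dx\Bigr)dy,
\]
which upon adding $\pi_{\bf 0}$ is exactly formula (4.6). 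The only non-routine point is the justification of the DCT-type passage to the limit for both $F_{\bf i}$ and $\partial F_{\bf i}/\partial u_k$, but this has essentially been carried out already inside the proof of Theorem~\ref{th4.2}, so I would cite that computation rather than repeat it.
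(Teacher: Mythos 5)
Your proposal is correct and follows essentially the same route as the paper: the paper's proof of Theorem~\ref{th4.3} is literally a citation to formula (4.5), which was derived inside the proof of Theorem~\ref{th4.2} by exactly the steps you describe (letting $t\to\infty$ in (2.2) via dominated convergence to get (4.4), restricting to the curve $(s,u_2(s),\ldots,u_n(s))$ so that $\sum_k B_k\,\partial_{u_k}$ collapses to $B_1\,\frac{d}{ds}$ by Theorem~\ref{th3.1}, and solving the resulting first-order linear ODE with the integrating factor). You have merely written out in full what the paper delegates to a back-reference, and the computation checks out.
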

\par
\begin{proof}
$(\ref{4.6})$ follows directly from the proof of Theorem $\ref{th4.2}$(see $(\ref{4.5})$).\hfill$\Box$
\end{proof}

\par
Finally, we have the following conclusion which follows immediately
from Theorem~\ref{th3.3}.
\begin{theorem}\label{th4.4}
The $n$TBIP is never strongly ergodic.
\end{theorem}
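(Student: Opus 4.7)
The plan is to invoke the classical first-passage criterion (Chen~\cite{C92}) that a positive recurrent continuous-time Markov chain on a countable state space is strongly ergodic if and only if $\sup_{\bf i}E_{\bf i}[\tau_{\bf 0}]<\infty$. If the $n$TBIP is not positive recurrent it is not even ergodic, so I may restrict to the positive recurrent case. Theorem~\ref{th4.2} then forces $\rho(1,\cdots,1)\le 0$, placing us in the regime $q_1=1$ where Theorem~\ref{th3.1} provides the auxiliary curves $(u_2(y),\cdots,u_n(y))$ with $u_k(y)<1$ on $[0,1)$ and $u_k(1)=1$.

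Since the distribution of $\tau_{\bf 0}$ starting from ${\bf i}\neq{\bf 0}$ depends only on the sub-$Q$-matrix indexed by ${\bf Z}_+^n\setminus\{{\bf 0}\}$, it coincides with the absorption time of the modified $n$TBI process obtained by setting $h_{\bf j}\equiv 0$. For that absorptive process the hypotheses of Theorem~\ref{th3.3} hold: the bound $1-yu_2(y)\cdots u_n(y)\le L\cdot(-H(y,u_2(y),\cdots,u_n(y)))$ that appears inside the proof of Theorem~\ref{th4.2}, combined with (\ref{4.3}) and the nonnegativity of $-A$, yields condition (\ref{3.12}). Hence formula (\ref{3.13}) is available. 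Specialising to ${\bf i}=N{\bf e}_1$ and letting $N\to\infty$, the factor $y^N$ decreases monotonically to $0$ on $[0,1)$, so the Monotone Convergence Theorem gives
\begin{eqnarray*}
\sup_{\bf i}E_{\bf i}[\tau_{\bf 0}]\ge\lim_{N\to\infty}E_{N{\bf e}_1}[\tau_{\bf 0}]=\int_0^1\frac{1}{B_1(y,u_2(y),\cdots,u_n(y))}\cdot e^{-\int_y^1\frac{A(x,u_2(x),\cdots,u_n(x))}{B_1(x,u_2(x),\cdots,u_n(x))}dx}dy.
\end{eqnarray*}

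It remains to prove that this final integral equals $+\infty$. Because $A\le 0$ on $[0,1)^n$ by Lemma~\ref{le 2.1} and $B_1(y,u_2(y),\cdots,u_n(y))>0$ on $(0,1)$ by Theorem~\ref{th3.1}, the exponent is nonnegative, so the exponential factor is at least $1$; hence it is enough to show $\int_{1-\delta}^1 dy/B_1(y,u_2(y),\cdots,u_n(y))=+\infty$. Now $b^{(1)}$ is a row of a $Q$-matrix, so $B_1(1,\cdots,1)=\sum_{\bf j}b^{(1)}_{\bf j}=0$; combined with $u_k(1)=1$ this already gives $B_1(y,u_2(y),\cdots,u_n(y))\to 0$ as $y\to 1^-$. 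The main technical point---and the chief obstacle---is upgrading this to the quantitative estimate $B_1(y,u_2(y),\cdots,u_n(y))\le K(1-y)$ for $y$ near $1$. This follows from the finiteness of the partial derivatives $B_{1k}(1,\cdots,1)$ granted by assumption (A-2) together with the Lipschitz regularity at $y=1$ of each $u_k$ that is inherited from the ODE (\ref{3.1}). Once the linear bound is in hand, $\int_{1-\delta}^1 dy/B_1\ge K^{-1}\int_{1-\delta}^1 dy/(1-y)=+\infty$, which contradicts $\sup_{\bf i}E_{\bf i}[\tau_{\bf 0}]<\infty$ and completes the argument.
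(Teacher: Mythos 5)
Your route is exactly what the paper's one-line proof (``follows immediately from Theorem~\ref{th3.3}'') must mean, fleshed out: reduce to the positive recurrent case, note that $\tau_{\bf 0}$ from ${\bf i}\neq{\bf 0}$ is the absorption time of the absorptive version, check (\ref{3.12}) from (\ref{4.3}) (with $J=\infty$ coming from Theorem~\ref{th4.1} since positive recurrence implies recurrence --- worth stating explicitly), apply (\ref{3.13}) with ${\bf i}=N{\bf e}_1$, and let $N\to\infty$ by monotone convergence. All of that is sound, and the final reduction to $\int_{1-\delta}^1 dy/B_1(y,u_2(y),\cdots,u_n(y))=+\infty$ is the right target.

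The one step whose justification does not hold up as written is the linear bound $B_1(y,u_2(y),\cdots,u_n(y))\leq K(1-y)$. You derive it from ``Lipschitz regularity at $y=1$ of each $u_k$ inherited from the ODE (\ref{3.1})'', but at $y=q_1=1$ the right-hand side of (\ref{3.1}) is of the form $0/0$ (both $B_k$ and $B_1$ vanish at ${\bf 1}$), so Picard--Lipschitz theory gives nothing at that endpoint; the paper itself only proves $u_k(q_1)=q_k$ by a separate limiting argument and establishes no modulus of continuity there. Fortunately the bound is true and needs neither (A-2) nor any regularity of the $u_k$: since $b^{(1)}_{\bf j}\geq 0$ for ${\bf j}\neq{\bf e}_1$, the function $B_1$ is nondecreasing in each of $u_2,\ldots,u_n$, so $B_1(y,u_2(y),\cdots,u_n(y))\leq B_1(y,1,\cdots,1)$; and $B_1(y,1,\cdots,1)=B_1(1,\cdots,1)-\int_y^1B_{11}(x,1,\cdots,1)\,dx\leq -\int_y^1 B_{11}(0,\cdots,0)\,dx=-b^{(1)}_{{\bf e}_1}(1-y)$, because $B_{11}$ is itself nondecreasing with minimum $b^{(1)}_{{\bf e}_1}$ and $B_1(1,\cdots,1)\leq 0$. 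With $K=-b^{(1)}_{{\bf e}_1}$ the divergence of $\int_{1-\delta}^1 dy/B_1$ follows and your argument closes. So: correct strategy, one misattributed (but easily repaired) technical estimate.
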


\par
\vspace{5mm}
 \setcounter{section}{5}
 \setcounter{equation}{0}
 \setcounter{theorem}{0}
 \setcounter{lemma}{0}
 \setcounter{corollary}{0}
\noindent {\large \bf 5. Branching Property}
 \vspace{3mm}

\par
In the following two sections, we will consider the branching
property and the decay property. For this purpose, we shall assume
that $h_{\bf j}=a_{\bf j}$, i.e., the $q$-matrix takes the following
form:
\begin{eqnarray}\label{5.1}
q_{{\bf i j}}
   =\begin{cases}
   a_{\bf j}\cdot\chi_{_{{\bf Z}_+^n}}({\bf j}),
    &  \mbox{if \ $|{\bf i}|=0$}\\
   \sum_{k=1}^ni_kb^{(k)}_{{\bf j}-{\bf i}+{\bf e}_k}\cdot \chi_{_{{\bf Z}_+^n}}({\bf j}-{\bf i}+{\bf e}_k)
   +a_{\bf j-i}\cdot\chi_{_{{\bf Z}_+^n}}({\bf j}-{\bf i}),
    &  \mbox{if \ $|{\bf i}|>0$}\\
     0,              & \mbox{otherwise}
    \end{cases}
  \end{eqnarray}
 where
\begin{eqnarray}\label{5.2}
\begin{cases}
 a_{\bf j}\geq 0({\bf j}\neq {\bf 0}), 0<\sum_{{\bf j}\neq {\bf 0}}a_{\bf
 j}\leq-a_{\bf 0}<\infty; \\
 b^{(k)}_{{\bf j}}\geq 0 \ ({\bf j}\neq {\bf e}_k),\ 0<\sum_{{\bf j}\neq {\bf e}_k}b^{(k)}_{\bf j}\leq
 -b^{(k)}_{{\bf e}_k}<\infty, \ \ k=1,\cdots,n.
 \end{cases}
 \end{eqnarray}
 \par
 It is well-known that $n$-type Markov branching process possesses branching property. We now discuss the similar
 property of $n$TBIP, the following theorem reveals that $n$TBIP also possesses the branching property if the resurrection
 is same as the immigration.
\par
\begin{theorem}
\label{th5.1} Let $P(t)=(p_{\bf ij}(t);{\bf i}, {\bf j}\in {\bf
Z}_+^n)$ be a transition function. Then the following statements are
equivalent.
\par
$(i)$\ $P(t)$ is the Feller minimal $Q$-function, where $Q$ takes
the form of $(5.1)-(5.2)$.
\par
$(ii)$\ For any ${\bf i}\in {\bf Z}_+^n$, $t\geq 0$, ${\bf
s}\in[-1,1]^n$, we have
\begin{eqnarray}\label{5.3}
F_{\bf i}(t,{\bf s})=F_{\bf 0}(t,{\bf s})\cdot
\prod_{k=1}^n(\sum_{{\bf j}\in {\bf Z}_+^n}\tilde{p}_{{\bf e}_k{\bf
j}}(t){\bf s}^{\bf j})^{i_k}
\end{eqnarray}
where $F_{\bf i}(t,{\bf s})=\sum_{{\bf j}\in {\bf Z}_{+}^{n}}
p_{{\bf i}{\bf j}}(t){\bf s}^{\bf j}\ ({\bf i}\in {\bf Z}_+^n, {\bf
s}\in[-1,1]^n)$ and $(\tilde{p}_{{\bf e}_i{\bf j}}(t);{\bf j}\in{\bf
Z}_{+}^{n})$ is the Feller minimal $\tilde{Q}$-function and
$\tilde{Q}$ is an $n$-type ordinary branching $q$-matrix $($but may
not be conservative$)$.
\par
$(iii)$\ For any ${\bf i}\in {\bf Z}_+^n$, $t\geq 0$, ${\bf
s}\in[-1,1]^n$, we have
\begin{eqnarray}\label{5.4}
F_{\bf i}(t,{\bf s})=F_{\bf 0}(t,{\bf s})\cdot\prod_{k=1}^n(F_{{\bf
e}_k}(t,{\bf s})/F_{\bf 0}(t,{\bf s}))^{i_k}.
\end{eqnarray}
In particular,
\begin{eqnarray}\label{5.5}
p_{\bf i0}(t)=p_{{\bf 0}{\bf 0}}(t)\cdot \prod_{k=1}^n(p_{{\bf
e}_k{\bf 0}}(t)/p_{\bf 00}(t))^{i_k},\ \ \ |{\bf i}|\geq 1
\end{eqnarray}
\end{theorem}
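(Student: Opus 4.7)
The plan is to establish the cyclic chain of implications $(i)\Rightarrow(ii)\Rightarrow(iii)\Rightarrow(i)$, with the bulk of the analytic work concentrated in the first implication.

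For $(i)\Rightarrow(ii)$, I will exploit the hypothesis $h_{\bf j}=a_{\bf j}$, which makes $H\equiv A$ and collapses equation (2.2) to the cleaner PDE
\begin{eqnarray*}
\frac{\partial F_{\bf i}(t,{\bf s})}{\partial t}=A({\bf s})\,F_{\bf i}(t,{\bf s})+\sum_{k=1}^n B_k({\bf s})\,\frac{\partial F_{\bf i}(t,{\bf s})}{\partial s_k},\qquad F_{\bf i}(0,{\bf s})={\bf s}^{\bf i}.
\end{eqnarray*}
The generating functions $\tilde{F}_{{\bf e}_k}(t,{\bf s})=\sum_{\bf j}\tilde{p}_{{\bf e}_k{\bf j}}(t){\bf s}^{\bf j}$ of the pure $n$-type branching minimal process (for which $A\equiv H\equiv 0$) satisfy $\partial_t\tilde{F}_{{\bf e}_k}=\sum_l B_l({\bf s})\,\partial_{s_l}\tilde{F}_{{\bf e}_k}$ with $\tilde{F}_{{\bf e}_k}(0,{\bf s})=s_k$. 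I will then introduce the product ansatz $G_{\bf i}(t,{\bf s}):=F_{\bf 0}(t,{\bf s})\prod_{k=1}^n\tilde{F}_{{\bf e}_k}(t,{\bf s})^{i_k}$ and verify via the product rule that $G_{\bf i}$ solves the same PDE with the same initial value ${\bf s}^{\bf i}$; the uniqueness half of Theorem 2.1 then forces $F_{\bf i}=G_{\bf i}$, which is precisely (5.3).

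For $(ii)\Rightarrow(iii)$, taking ${\bf i}={\bf e}_k$ in (5.3) identifies $\sum_{\bf j}\tilde{p}_{{\bf e}_k{\bf j}}(t){\bf s}^{\bf j}=F_{{\bf e}_k}(t,{\bf s})/F_{\bf 0}(t,{\bf s})$, and substituting back into (5.3) yields (5.4); specializing to ${\bf s}={\bf 0}$ then gives (5.5). For $(iii)\Rightarrow(i)$, I will differentiate (5.4) at $t=0^+$ using $F_{\bf 0}(0,{\bf s})=1$ and $F_{{\bf e}_k}(0,{\bf s})=s_k$ to read off a $q$-matrix of the shape (5.1)--(5.2) with $h_{\bf j}=a_{\bf j}$, after which the uniqueness statement of Theorem 2.1 forces $P(t)$ to coincide with the Feller minimal process.

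The principal obstacle is the product-rule verification in $(i)\Rightarrow(ii)$: one must check that $\sum_l B_l({\bf s})\,\partial_{s_l}G_{\bf i}$ reassembles cleanly from the gradient terms coming from the $F_{\bf 0}$ factor and from each of the exponentiated $\tilde{F}_{{\bf e}_k}$ factors, while the non-branching term $A({\bf s})\,G_{\bf i}$ must emerge entirely from the $F_{\bf 0}$ factor (the pure-branching factors contribute nothing to the $A$-piece precisely because $A\equiv H\equiv 0$ for $\tilde{Q}$). Once that bookkeeping is in place, the two remaining implications are essentially formal consequences of uniqueness.
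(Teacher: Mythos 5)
Your implications $(i)\Rightarrow(ii)$ and $(ii)\Rightarrow(iii)$ essentially reproduce the paper's argument: the paper also writes the candidate as the convolution $\hat{p}_{\bf ij}(t)=\sum_{{\bf k}\leq{\bf j}}p_{\bf 0k}(t)\tilde{p}_{{\bf i},{\bf j}-{\bf k}}(t)$, whose generating function is exactly your product ansatz $F_{\bf 0}(t,{\bf s})\prod_k\tilde{F}_{{\bf e}_k}(t,{\bf s})^{i_k}$, checks that it satisfies the forward PDE, and invokes the uniqueness in Theorem 2.1. That part of your plan is sound and is the same route.

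The gap is in $(iii)\Rightarrow(i)$. The hypothesis there is only that $P(t)$ is \emph{a transition function} satisfying (5.4); you propose to differentiate (5.4) at $t=0^{+}$ to read off a $q$-matrix $Q$ of the form (5.1)--(5.2) and then let ``the uniqueness statement of Theorem 2.1'' finish. Two things are missing. First, for an arbitrary transition function the diagonal derivatives $q_{\bf i}=-p_{\bf ii}'(0)$ may be infinite, so you cannot differentiate $F_{\bf i}(t,{\bf s})$ at $t=0$ term by term until you know $Q$ is stable. Second, and more seriously, Theorem 2.1 asserts uniqueness only among transition functions satisfying the Kolmogorov \emph{forward} equation $P'(t)=P(t)Q$; a general transition function whose derivative at $0$ equals $Q$ need not satisfy that equation, and nothing in your argument shows that yours does, so uniqueness cannot be invoked to conclude $P(t)$ is the Feller minimal $Q$-function. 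The paper closes both holes with one extra idea that your proposal lacks: from (5.4) and $F_{\bf i}\leq 1$ one first gets $F_{{\bf e}_k}(t,{\bf s})\leq F_{\bf 0}(t,{\bf s})$, then a continuity argument at $t=0$ produces $\tilde{t},\tilde{\bf s}$ with strict inequality, whence $F_{\bf i}(\tilde{t},\tilde{\bf s})\to 0$ and so $p_{\bf ij}(\tilde{t})\to 0$ as $|{\bf i}|\to\infty$. This makes $P(t)$ a Feller--Reuter--Riley transition function, and Anderson's theory then gives both that $Q$ is stable and that $P(t)$ is the Feller \emph{minimal} $Q$-function; only after that does the differentiation at $t=0$ (together with the sign constraints $q_{\bf ij}\geq 0$ for ${\bf i}\neq{\bf j}$, $q_{\bf ii}\leq 0$, which you also need in order to see that the coefficients you read off really satisfy (5.2)) identify $Q$ as an $n$TBI $q$-matrix. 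You should add this Feller--Reuter--Riley step before attempting the $t=0$ differentiation.
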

\par
\begin{proof}
$(ii)\Rightarrow(iii)$ is trivial and thus omitted. Therefore, we
only need to prove $(i)\Rightarrow(ii)$ and $(iii)\Rightarrow(i)$.
Note the fact that $B(1,\cdots,1)\leq0$ always holds no matter $Q$
is conservative or not. We first prove $(i)\Rightarrow(ii)$. If
$(i)$ holds, then $P(t)$ as the Feller minimal $Q$-function,
satisfies the Kolmogorov forward equation $P'(t)=P(t)Q$. We now
prove (\ref{5.3}). Let $\tilde{Q}=(\tilde{q}_{\bf ij};{\bf i},{\bf
j}\in {\bf Z}_+^n)$ be defined as follows:
$$
\tilde{q}_{{\bf i j}}
   =\begin{cases}
   \sum_{k=1}^ni_kb^{(k)}_{{\bf j}-{\bf i}+{\bf e}_k}\cdot \chi_{_{{\bf Z}_+^n}}({\bf j}-{\bf i}+{\bf e}_k),
    &  \mbox{if \ $|{\bf i}|>0$},\\
     0,              & \mbox{otherwise}
    \end{cases}
$$
and $(\tilde{p}_{{\bf e}_i{\bf j}}(t);{\bf j}\in{\bf Z}_{+}^{n})$ is
the Feller minimal $\tilde{Q}$-function. Then by
Athreya~\cite{1972-Athreya and Ney-p}, we have
 $$
 \sum_{{\bf j}\in {\bf
Z}_{+}^{n}}\tilde{ p}_{{\bf i}{\bf j}}(t){\bf s}^{\bf
j}=\prod_{k=1}^n(\sum_{{\bf j}\in {\bf Z}_+^n}\tilde{p}_{{\bf
e}_k{\bf j}}(t){\bf s}^{\bf j})^{i_k}.
$$
\par
Now define $\hat{P}(t)=(\hat{p}_{\bf ij}(t);{\bf i},{\bf j}\in {\bf
Z}_+^n)$ by
$$
\hat{p}_{\bf ij}(t)=\sum_{{\bf k}\leq {\bf j}}p_{\bf
0k}(t)\tilde{p}_{\bf ij-k}(t).
$$
It is easily seen that $\hat{P}(t)$ is a $Q$-function. We now show
that $\hat{P}(t)$ satisfies the Kolmogorov forward equation
$\hat{P}'(t)=\hat{P}(t)Q$. Denote $\hat{F}_{\bf i}(t,{\bf
s})=\sum_{{\bf j}\in {\bf Z}_+^n}\hat{p}_{\bf ij}(t){\bf s}^{\bf
j}$, then
$$
\hat{F}_{\bf i}(t,{\bf s})=F_{\bf 0}(t,{\bf
s})\cdot\prod_{k=1}^n(\sum_{{\bf j}\in {\bf Z}_+^n}\tilde{p}_{{\bf
e}_k{\bf j}}(t){\bf s}^{\bf j})^{i_k},\ \ {\bf i}\in {\bf Z}_+^n.
$$
Now we claim that $\hat{F}_{\bf i}(t,{\bf s})$ satisfies
(\ref{5.3}). Note that
\begin{eqnarray*}
\frac{\partial F_{\bf 0}(t,{\bf s})}{\partial t}=\sum
_{k=1}^{n}B_k({\bf s})\frac{\partial F_{\bf 0}(t,{\bf s})}{\partial
s_k}+A({\bf s})F_{\bf 0}(t,{\bf s})
\end{eqnarray*}
it can be easily seen that $\hat{F}_{\bf i}(t,{\bf s})$ satisfies
\begin{eqnarray*}
\frac{\partial \hat{F}_{\bf i}(t,{\bf s})}{\partial t}=\sum
_{k=1}^{n}B_k({\bf s})\frac{\partial \hat{F}_{\bf i}(t,{\bf
s})}{\partial s_k}+A({\bf s})\hat{F}_{\bf i}(t,{\bf s}),\ \ \ {\bf
i}\neq {\bf 0}
\end{eqnarray*}
which implies that $\hat{P}'(t)=\hat{P}(t)Q$. By Theorem 2.1, we
must have $\hat{P}(t)=P(t)$ and hence (\ref{5.3}) holds. $(ii)$ is
proved.
\par
Next we prove $(iii)\Rightarrow (i)$. First note that (\ref{5.4})
implies that $F_{{\bf e}_k}(t,{\bf s})\leq F_{\bf 0}(t,{\bf s})$ for
all $t>0$ and ${\bf s}\in (0,1)^n$. We now further claim that there
exist $\tilde{t}>0$ and $\tilde{\bf s}\in (0,1)^n$ such that
$$
F_{{\bf e}_k}(\tilde{t},\tilde{\bf s})<F_{\bf
0}(\tilde{t},\tilde{\bf s})
$$
Indeed, suppose the converse is true, then $F_{{\bf e}_k}(t,{\bf
s})=F_{\bf 0}(t,{\bf s})$ for all $t>0$ and ${\bf s}\in (0,1)^n$. It
follows that $F_{{\bf e}_k}(t,{\bf s})=F_{\bf 0}(t,{\bf s})$ holds
even for all $t\geq 0$ and ${\bf s}\in [0,1]^n$ since both $F_{{\bf
e}_k}(t,{\bf s})$ and $F_{\bf 0}(t,{\bf s})$ are continuous
functions of $t\geq 0$ and ${\bf s}\in [0,1]^n$. Hence,
$$
p_{{\bf e}_k{\bf j}}(t)=p_{\bf 0j}(t),\ \ t\geq 0,\ \ {\bf j}\in
{\bf Z}_+^n
$$
which contradicts with the fact that $\lim_{t\downarrow0}p_{\bf
ij}(t)=\delta_{\bf ij}$.
\par
Now, it follows from (\ref{5.4}) and $F_{{\bf
e}_k}(\tilde{t},\tilde{\bf s})<F_{\bf 0}(\tilde{t},\tilde{\bf s})$
that
$$
\lim_{{\bf i}\rightarrow\infty}F_{\bf i}(\tilde{t},\tilde{\bf s})=0
$$
which implies that $\lim_{\bf i\rightarrow\infty}p_{\bf
ij}(\tilde{t})=0$ for all ${\bf j}\in {\bf Z}_+^n$. Therefore
$P(t)=(p_{\bf ij}(t); {\bf i}, {\bf j}\in {\bf Z}_+^n)$ is a
Feller-Reuter-Riley transition function. By Anderson (1991), we know
that the corresponding $q$-matrix $Q=(q_{\bf ij};{\bf i}, {\bf j}\in
{\bf Z}_+^n)$ is stable and furthermore $P(t)$ is the Feller minimal
$Q$-function. Now, we rewrite (\ref{5.4}) as
$$
F_{\bf i}(t,{\bf s})\cdot \prod_{k=1}^nF_{\bf 0}^{i_k}(t,{\bf
s})=F_{\bf 0}(t,{\bf s})\cdot \prod_{k=1}^nF_{{\bf e}_k}^{
i_k}(t,{\bf s})
$$
Denoting $b^{(k)}_{\bf 0}=q_{{\bf e}_k{\bf 0}}(k=1,\cdots,n)$ and
$y_{\bf j}=q_{\bf 0j}({\bf j}\in {\bf Z}_+^n)$. Differentiating the
above equality with respect to $t$ and letting $t=0$ yields that for
any ${\bf i}\neq {\bf 0}$,
\begin{eqnarray*}
&F'_{\bf i}(0,{\bf s})\cdot \prod_{k=1}^nF_{\bf 0}^{i_k}(0,{\bf
s})+F_{\bf i}(0,{\bf s})\cdot\sum_{l=1}^n \frac{\prod_{k=1}^nF_{\bf
0}^{i_k}(0,{\bf s})}{F_{\bf
0}(0,{\bf s})}\cdot i_l\cdot F'_{\bf 0}(0,{\bf s})\\
=&F'_{\bf 0}(0,{\bf s})\cdot\prod_{k=1}^nF_{{\bf e}_k}^{i_k}(0,{\bf
s})+F_{\bf 0}(0,{\bf s})\cdot\sum_{l=1}^n \frac{\prod_{k=1}^nF_{{\bf
e}_k}^{i_k}(0,{\bf s})}{F_{{\bf e}_l}(0,{\bf s})}\cdot i_l\cdot
F'_{{\bf e}_l}(0,{\bf s})
\end{eqnarray*}
where $F'_{\bf i}(t,{\bf s})=\sum_{{\bf j}\in {\bf Z}_{+}^{n}}
p'_{{\bf i}{\bf j}}(t){\bf s}^{\bf j}$. Hence,
$$
\sum_{{\bf j}\in {\bf Z}_+^n}q_{\bf ij}{\bf s}^{\bf
j}=-(\sum_{k=1}^n i_k-1)\cdot \sum_{{\bf j}\in {\bf Z}_+^n}y_{\bf
j}{\bf s}^{\bf j+i}+\sum_{k=1}^n i_k\cdot \sum_{{\bf j}\in {\bf
Z}_+^n}q_{{\bf e}_k{\bf j}}{\bf s}^{{\bf j}+{\bf i}-{\bf e}_k}
$$
Comparing the coefficients of ${\bf s}^{\bf j}$ on both sides of the
above equality yields
$$
q_{\bf ij}=\begin{cases} y_{\bf j}\cdot\chi_{_{{\bf Z}_+^n}}({\bf
j}),
    &  \mbox{if \ $|{\bf i}|=0$}\\
   \sum_{k=1}^ni_k(q_{{\bf e}_k{{\bf j}-{\bf i}+{\bf e}_k}}-y_{\bf j-i})\cdot \chi_{_{{\bf Z}_+^n}}({\bf j}-{\bf i}
   +{\bf e}_k)
   +y_{\bf j-i}\cdot\chi_{_{{\bf Z}_+^n}}({\bf j}-{\bf i}),
    &  \mbox{if \ $|{\bf i}|>0$}\\
     0,              & \mbox{otherwise}
\end{cases}
$$
Noting the fact $q_{\bf ij}\geq 0\ (\sum_{k=1}^n i_k>0, {\bf j}>
{\bf i})$ and $q_{\bf ii}\leq 0$ we can see that
$$
b_{{\bf e}_k}^{(k)}:=q_{{\bf e}_k{\bf e}_k}-y_{\bf 0}\leq 0,\ \
b_{{\bf j}+{\bf e}_k}^{(k)}:=q_{{\bf e}_k{\bf j}+{\bf e}_k}-y_{\bf
j}\geq 0, \ \ {\bf j}> {\bf 0}.
$$
Hence,
$$
q_{\bf ij}=\begin{cases} y_{\bf j}\cdot\chi_{_{{\bf Z}_+^n}}({\bf
j}),
    &  \mbox{if \ $|{\bf i}|=0$}\\
   \sum_{k=1}^ni_k b_{{\bf j}-{\bf i}+{\bf e}_k}^{(k)}\cdot \chi_{_{{\bf Z}_+^n}}({\bf j}-{\bf i}+{\bf e}_k)
   +y_{\bf j-i}\cdot\chi_{_{{\bf Z}_+^n}}({\bf j}-{\bf i}),
    &  \mbox{if \ $|{\bf i}|>0$}\\
     0,              & \mbox{otherwise}
\end{cases}
$$
Comparing this with $(5.1)$ implies that $Q$ takes the form of
$(5.1)$ with $a_{\bf j}=y_{\bf j}({\bf j}\in {\bf Z}_+^n)$. Finally,
general theory of continuous-time Markov chain yields
$$
\sum_{{\bf j}\neq {\bf 0}}y_{\bf j}\leq-y_{\bf 0}<+\infty,\ \
\sum_{{\bf j}\in {\bf Z}_+^n}(y_{\bf j}+\sum_{k=1}^ni_k b_{{\bf
j}-{\bf i}+{\bf e}_k}^{(k)})=\sum_{{\bf j}\in {\bf Z}_+^n}q_{\bf
ij}\leq 0
$$
and hence
$$
\sum_{{\bf j}\neq {\bf e}_k}b_{\bf j}^{(k)}\leq-b_{{\bf
e}_k}^{(k)}<+\infty.
$$
Thus $Q$ takes the form of $(4.1)-(4.2)$ with $a_{\bf j}\equiv
y_{\bf j}({\bf j}\in {\bf Z}_+^n)$(but may not be conservative).
\hfill$\Box$
\end{proof}

\par
\vspace{5mm}
 \setcounter{section}{6}
 \setcounter{equation}{0}
 \setcounter{theorem}{0}
 \setcounter{lemma}{0}
 \setcounter{corollary}{0}
 \setcounter{remark}{0}
\noindent {\large \bf 6. Decay Property}
 \vspace{3mm}
\par
In the previous section, we considered branching property in the
case that $h_{\bf j}=a_{\bf j}$. We now discuss the decay parameter
$\lambda_Z$ and related property in such case.
\par
\begin{theorem}
\label{th6.1} Suppose that $G(1,\cdots,1)$ is positively regular,
$\{B_i(u_1,\cdots,u_n); i=1,\cdots,n\}$ is nonsingular and ${\bf
Z}_+^n$ is a communicating class. Then
\begin{eqnarray*}
\lambda_Z\geq -A(q_1,\cdots,q_n),
\end{eqnarray*}
where $(q_1,\cdots,q_n)$ is the minimal nonnegative solution of
(\ref{2.1}) given in Lemma~\ref{le 2.2}.
\begin{proof}
In order to prove $\lambda_Z\geq -A(q_1,\cdots,q_n)$, it follows
from Proposition 5.4.1 in Anderson~\cite{-Anderson-p}, we only need
to show that there exists a $-A(q_1,\cdots,q_n)$-subinvariant vector
for $Q$ on ${\bf Z}_+^n$. In other words, we only need to show that
there exists a positive $(y_{\bf j};{\bf j}\in {\bf Z}_+^n)$ such
that
\begin{eqnarray*}
\sum_{{\bf k}\in {\bf Z}_+^n\setminus{\bf 0}}q_{\bf jk}\ y_{\bf
k}\leq A(q_1,\cdots,q_n)y_{\bf j},\ \ \ {\bf j} \in {\bf Z}_+^n.
\end{eqnarray*}
By Lemma~\ref{le 2.2}, we know that equation (\ref{2.1}) has a
smallest nonnegative solution ${\bf q}=(q_1,\cdots,q_n)\in [0,1]^n$.
Note that ${\bf Z}_+^n$ is a communicating class, we further have
${\bf q}\in (0,1]^n$.
\par

Define
\begin{eqnarray*}
 y_{k}=q_1^{k_1}\cdots q_n^{k_n},\ \ \
 {\bf k}=(k_1,\cdots,k_n)\in {\bf Z}_+^n.
\end{eqnarray*}
Then $y_{\bf k}>0,\ \forall {\bf k}\in {\bf Z}_+^n$. Moreover,
 \begin{eqnarray*}
 \sum_{{\bf k}\in {\bf Z}_+^n}q_{\bf 0
k}\ y_{\bf k}=\sum_{{\bf k}\in {\bf Z}_+^n}a_{\bf k}q_1^{k_1}\cdots
q_n^{k_n}=A(q_1,\cdots,q_n)=A(q_1,\cdots,q_n)y_{\bf 0}.
\end{eqnarray*}
For ${\bf j}\in {\bf Z}_+^n\setminus {\bf 0},i=1,\cdots,n$,
 \begin{eqnarray*}
\sum_{{\bf k}\in {\bf Z}_+^n}q_{\bf jk} y_{\bf k} &=&\sum_{{\bf
k}\in {\bf Z}_+^n}(\sum\limits_{i=1}^nj_i b_{{\bf k}-{\bf j}+{\bf
e}_i}^{(i)})q_1^{k_1}\cdots q_n^{k_n}+\sum_{{\bf
k}\geq {\bf j}}a_{\bf k-j}q_1^{k_1}\cdots q_n^{k_n}\nonumber \\
&=& \sum\limits_{i=1}^n B_i(q_1,\cdots,q_n)j_iq_1^{j_1}\cdots
q_i^{j_i-1}\cdots q_n^{k_n}
+A(q_1,\cdots,q_n)q_1^{j_1}\cdots q_n^{j_n} \nonumber \\
&=& A(q_1,\cdots,q_n)y_{\bf j}.
\end{eqnarray*}
 Which implies that $(y_{\bf j};{\bf j}\in {\bf Z}_+^n)$ is a
$-A(q_1,\cdots,q_n)$-invariant vector for  $Q$ on ${\bf Z}_+^n$.
Therefore, $\lambda_Z \geq -A(q_1,\cdots,q_n)$. \hfill$\Box$
\end{proof}
\end{theorem}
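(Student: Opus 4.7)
The plan is to invoke the subinvariant-vector characterization of the decay parameter (Proposition 5.4.1 in Anderson~\cite{-Anderson-p}): to conclude that $\lambda_Z \geq \mu$ for a given $\mu \geq 0$ on an irreducible class, it suffices to exhibit a strictly positive $\mu$-subinvariant vector $(y_{\bf j};{\bf j}\in {\bf Z}_+^n)$ for $Q$, that is, one satisfying $\sum_{\bf k} q_{\bf jk}\, y_{\bf k} \leq -\mu\, y_{\bf j}$ for every ${\bf j}\in {\bf Z}_+^n$. Here I would take $\mu = -A(q_1,\ldots,q_n)$, which is nonnegative by Lemma~\ref{le 2.1} since $q_i \in (0,1]$.

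The natural candidate is the product form $y_{\bf k} = q_1^{k_1}\cdots q_n^{k_n}$, motivated by the fact that in the pure branching case the vector $({\bf q}^{\bf k})$ is always a $0$-invariant vector. Positivity $y_{\bf k} > 0$ requires $q_i > 0$ for all $i$; this is where the assumption that ${\bf Z}_+^n$ is a single communicating class enters: if some $q_i$ were zero, then from state ${\bf e}_i$ the Feller minimal branching process would be absorbed into ${\bf 0}$ with positive probability while avoiding any state that involves a type $j$ with $q_j > 0$, contradicting irreducibility. So one first records that ${\bf q}\in (0,1]^n$.

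The verification of the subinvariant inequality then splits by cases. For ${\bf j} = {\bf 0}$, the rates $q_{\bf 0k} = a_{\bf k}\,\chi_{{\bf Z}_+^n}({\bf k})$ give $\sum_{\bf k} q_{\bf 0k}\, y_{\bf k} = A(q_1,\ldots,q_n) = -\mu\, y_{\bf 0}$. For ${\bf j}\neq {\bf 0}$, substituting the definition of $q_{\bf jk}$ and reorganizing the sums produces the generating-function identity
\begin{eqnarray*}
\sum_{{\bf k}\in {\bf Z}_+^n} q_{\bf jk}\, y_{\bf k}
  = \sum_{i=1}^n j_i\, B_i(q_1,\ldots,q_n)\, q_1^{j_1}\cdots q_i^{j_i - 1}\cdots q_n^{j_n}
  + A(q_1,\ldots,q_n)\, q_1^{j_1}\cdots q_n^{j_n}.
\end{eqnarray*}
The first sum vanishes identically because ${\bf q}$ is a solution of~(\ref{2.1}), so the right-hand side collapses to $A(q_1,\ldots,q_n)\, y_{\bf j} = -\mu\, y_{\bf j}$, giving equality (hence in particular a $-A({\bf q})$-invariant vector, stronger than subinvariant).

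The conceptual obstacle is not the algebra, which is routine given the generating-function machinery from Section~2, but rather making sure that the candidate vector is admissible in Anderson's framework: one needs strict positivity of every $y_{\bf k}$, and this rests entirely on the irreducibility hypothesis on ${\bf Z}_+^n$ combined with Lemma~\ref{le 2.2}. Once positivity is in hand, the cancellation $B_i({\bf q}) = 0$ does all the work and the theorem follows immediately from Proposition~5.4.1 of Anderson~\cite{-Anderson-p}.
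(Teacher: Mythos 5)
Your proposal is correct and follows essentially the same route as the paper: reduce via Proposition 5.4.1 of Anderson to exhibiting a strictly positive $-A(q_1,\cdots,q_n)$-subinvariant vector, take $y_{\bf k}=q_1^{k_1}\cdots q_n^{k_n}$, use the communicating-class hypothesis to get ${\bf q}\in(0,1]^n$ and hence positivity, and let the cancellation $B_i({\bf q})=0$ yield equality (so the vector is in fact invariant). Your brief justification of why irreducibility forces each $q_i>0$ is a small elaboration the paper leaves implicit, but the argument is otherwise identical.
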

\par
Theorem~\ref{th6.1} gives a low-bound of the decay parameter. The
following theorem further presents the exact value of the decay
parameter.
\par
\begin{theorem}
\label{th6.2} Suppose $G(1,\cdots,1)$ is positively regular,
$\{B_i(u_1,\cdots,u_n); i=1,\cdots,n\}$ is nonsingular and ${\bf
Z}_+^n$ is a communicating class. Then
\begin{eqnarray*}
\lambda_Z= -A(q_1,\cdots,q_n),
\end{eqnarray*}
where $(q_1,\cdots,q_n)$ is the minimal nonnegative solution of
(\ref{2.1}) given in Lemma~\ref{le 2.2}.
\end{theorem}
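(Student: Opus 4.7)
The plan is to complement the lower bound $\lambda_Z\ge-A({\bf q})$ from Theorem~\ref{th6.1} with a matching upper bound via a Doob $h$-transform, where ${\bf q}:=(q_1,\ldots,q_n)$. The engine is an explicit identity obtained by substituting ${\bf u}={\bf q}$ into the forward equation~(2.2). Because ${\bf q}$ satisfies $B_k({\bf q})=0$ for every $k$ (Lemma~\ref{le 2.2}) and $H\equiv A$ throughout Section~6, every derivative term in (2.2) vanishes at ${\bf u}={\bf q}$ and the PDE collapses to the scalar linear ODE $\frac{d}{dt}F_{\bf i}(t,{\bf q})=A({\bf q})F_{\bf i}(t,{\bf q})$ with initial value $q_1^{i_1}\cdots q_n^{i_n}$. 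Integration yields
$$
F_{\bf i}(t,{\bf q})=q_1^{i_1}\cdots q_n^{i_n}\,e^{A({\bf q})t},\qquad t\ge 0,
$$
so $y_{\bf k}:=q_1^{k_1}\cdots q_n^{k_n}$ is a strictly positive bounded $(-A({\bf q}))$-invariant vector for the semigroup $P(t)$ itself, not only for $Q$ as in Theorem~\ref{th6.1}.

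This genuine $P(t)$-invariance allows a Doob $h$-transform. Setting $\hat p_{\bf ij}(t):=e^{-A({\bf q})t}(y_{\bf j}/y_{\bf i})p_{\bf ij}(t)$, the identity above makes $\hat P(t)$ honest, Chapman--Kolmogorov is immediate, and the decay parameters are tied by $\hat\lambda_Z=\lambda_Z+A({\bf q})$. A routine calculation shows that $\hat Q$ is again an $n$TBI $q$-matrix with generating functions
$$
\hat B_k({\bf u})=q_k^{-1}B_k(q_1u_1,\ldots,q_nu_n),\qquad \hat A({\bf u})=A(q_1u_1,\ldots,q_nu_n),
$$
so $\hat B_k({\bf 1})=q_k^{-1}B_k({\bf q})=0$, and the derivative matrix $\hat B({\bf 1})$ is conjugate to $B({\bf q})$ by ${\rm diag}(q_1,\ldots,q_n)$, giving $\hat\rho({\bf 1})=\rho({\bf q})\le0$ by Lemma~\ref{le 2.2}(ii).

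To finish I would apply the recurrence criterion of Theorem~\ref{th4.1} to $\hat P$. The change of variables $y=q_1 u$ together with the identity $q_k\hat u_k(u)=u_k(q_1 u)$ relating the transformed characteristic trajectory to the original one of Theorem~\ref{th3.1} transforms $\hat J$ into
$$
\hat J=\int_0^{q_1}\frac{1}{B_1({\bf u}(y))}\exp\Bigl(\int_0^y\frac{A({\bf u}(x))}{B_1({\bf u}(x))}\,dx\Bigr)dy.
$$
A local expansion near $y=q_1$, where $B_1({\bf u}(y))\sim C(q_1-y)$ with $C>0$, shows the integrand behaves like $(q_1-y)^{A({\bf q})/C-1}$; since $A({\bf q})<0$ forces $A({\bf q})/C-1<-1$, this integral diverges, so $\hat J=+\infty$. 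Together with $\hat\rho({\bf 1})\le0$, Theorem~\ref{th4.1} yields that $\hat P$ is recurrent and hence $\hat\lambda_Z=0$, whence $\lambda_Z=-A({\bf q})$. The main obstacle is precisely the local analysis yielding $\hat J=+\infty$: while the similarity argument for $\hat\rho({\bf 1})$ is painless, the divergence of $\hat J$ rests on pinning down the simple-zero behavior of $B_1$ along the trajectory at ${\bf q}$ and on using $A({\bf q})<0$, and must go via the characteristic curve of Theorem~\ref{th3.1} to make the change of variables rigorous. Combining with Theorem~\ref{th6.1} completes the proof.
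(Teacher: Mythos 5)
Your route for the upper bound in the case ${\bf q}<{\bf 1}$ is essentially the paper's: the paper also twists by $\hat p_{\bf ij}(t)=e^{-A({\bf q})t}({\bf q}^{\bf j}/{\bf q}^{\bf i})p_{\bf ij}(t)$, identifies $\hat Q$ as a conservative $n$TBI $q$-matrix with $\hat\rho(1,\cdots,1)=\rho({\bf q})\le 0$, and invokes Theorem~\ref{th4.1} to conclude that $\hat P$ is recurrent, whence $\int_0^\infty e^{-A({\bf q})t}p_{\bf ii}(t)\,dt=\infty$ and $\lambda_Z\le -A({\bf q})$ (you are in fact more scrupulous than the paper in trying to verify $\hat J=+\infty$, which the paper passes over in silence). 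The genuine gap is the case ${\bf q}={\bf 1}$, which your argument cannot reach: there the $h$-transform is the identity, $A({\bf q})=0$, and you would need $P$ itself to be recurrent; but Theorem~\ref{th4.1} allows $\rho(1,\cdots,1)\le 0$ with $J<\infty$, in which case the process is transient, and transience is perfectly compatible with $\lambda_Z=0$, so the recurrence route yields nothing. Your own local analysis betrays this: you use $A({\bf q})<0$ and a simple zero of $B_1$ along the trajectory, both of which fail at ${\bf q}={\bf 1}$ (in the critical case the zero need not be simple, which is exactly how $J<\infty$ can occur). The paper closes this case by a separate comparison argument: perturb to the nonconservative matrix with $b^{(k)(\varepsilon)}_{\bf j}=b^{(k)}_{\bf j}-\varepsilon\delta_{{\bf j},{\bf e}_k}$, whose root ${\bf q}^{(\varepsilon)}<{\bf 1}$, use $p^{(\varepsilon)}_{\bf ij}(t)\le p_{\bf ij}(t)$ from the backward equations to get $\lambda_Z\le -A({\bf q}^{(\varepsilon)})$, and let $\varepsilon\downarrow 0$. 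You need some such device.

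In the part you did write there are also two sign-level slips. The immigration generating function of $\hat Q$ is $\hat A({\bf u})=A(q_1u_1,\cdots,q_nu_n)-A({\bf q})$, not $A(q_1u_1,\cdots,q_nu_n)$: the Doob twist adds $-A({\bf q})$ to every diagonal entry, and without this correction $\hat A({\bf 1})=A({\bf q})<0$, so $\hat Q$ would not be conservative, contradicting your own claim that $\hat P$ is honest. Moreover $e^{\int_0^y A({\bf u}(x))/B_1({\bf u}(x))\,dx}\sim (q_1-y)^{-A({\bf q})/C}$, with a minus sign, so with your uncorrected exponent the integrand of $\hat J$ behaves like $(q_1-y)^{-A({\bf q})/C-1}$, whose exponent exceeds $-1$, and $\hat J$ would be \emph{finite}. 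With the corrected $\hat A$ the numerator $A({\bf u}(x))-A({\bf q})$ vanishes together with $B_1({\bf u}(x))$ at $x=q_1$, the exponential factor stays bounded away from $0$ and $\infty$, and $\hat J=+\infty$ follows simply from the non-integrability of $1/B_1$ at its simple zero. Your conclusion $\hat J=+\infty$ is therefore correct, but only because two errors cancel; as written the step does not stand.
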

\begin{proof}
By Theorem~\ref{th6.1}, we only need to prove $\lambda_Z\leq
-A(q_1,\cdots,q_n)$. Similar to the proof of Theorem~\ref{th 2.1},
we still have $q_1,\cdots,q_n>0$. It follows from the Kolmogorov
forward equation that
\begin{eqnarray*}
\aligned
 & \frac{\partial}{\partial t}F_{{\bf
j}}(u_1,\cdots,u_n,t)\\
=&\sum_{i=1}^nB_i(u_1,\cdots,u_n)\frac{\partial}{\partial
u_i}F_{{\bf j}}(u_1,\cdots,u_n,t)+A(u_1,\cdots,u_n)F_{\bf
j}(u_1,\cdots,u_n,t)
\endaligned
\end{eqnarray*}
where $F_{\bf j}(u_1,\cdots,u_n,t)=\sum_{{\bf k}\in {\bf
Z}_+^n}p_{{\bf j}{\bf k}}(t)u_1^{k_1}\cdots
u_n^{k_n},u_1,\cdots,u_n\in(-1,1)$.
\par
 If $(q_1,\cdots,q_n)<{\bf
1}$. Define
 \begin{eqnarray}{\label{6.1}}
 \hat{p}_{\bf ij}(t)=e^{-A(q_1,\cdots,q_n)t}p_{\bf
 ij}(t)\frac{q_1^{j_1}\cdots q_n^{j_n}}{q_1^{i_1}\cdots q_n^{i_n}},\
 \ {\bf i},{\bf j}\in{\bf Z}_+^n, t\geq 0.
 \end{eqnarray}
 Then by Pollett~\cite{1986-Pollett-p203-221}, we know that $\hat{P}(t)=(\hat{p}_{\bf
 ij}(t);{\bf i},{\bf j}\in {\bf Z}_+^n)$ is a
 stationary and honest transition function on ${\bf Z}_+^n$.
 Moreover, it is easy to see that its $q$-matrix $\hat{Q}=(\hat{q}_{\bf ij};{\bf
 i},{\bf j}\in {\bf Z}_+^n)$ is given by
  $$
 \hat{q}_{{\bf i j}}
   =\begin{cases}
   \hat{a}_{\bf j}\cdot\chi_{_{{\bf Z}_+^n}}({\bf j}),
    &  \mbox{if \ $|{\bf i}|=0$}\\
   \sum_{k=1}^ni_k\hat{b}^{(k)}_{{\bf j}-{\bf i}+{\bf e}_k}\cdot \chi_{_{{\bf Z}_+^n}}({\bf j}-{\bf i}+{\bf e}_k)
   +\hat{a}_{\bf j-i}\cdot\chi_{_{{\bf Z}_+^n}}({\bf j}-{\bf i}),
    &  \mbox{if \ $|{\bf i}|>0$}\\
     0,              & \mbox{otherwise}
    \end{cases}
 $$
 where
 $$
 \hat{a}_{\bf j}=a_{\bf j}q_1^{j_1}\cdots
 q_n^{j_n}-A(q_1,\cdots,q_n)\delta_{\bf 0j},\ \ \hat{b}_{\bf
 j}^{(i)}=b_{\bf j}^{(i)} q_1^{j_1}\cdots q_n^{j_n},\ \ ({\bf j}\in
 {\bf Z}_+^n)
 $$
 Obviously, $\hat{Q}$ is a conservative $n$TBI
 $q$-matrix. Let
 \begin{eqnarray*}
 & & \hat{A}(u_1,\cdots,u_n)=\sum_{{\bf j}\in {\bf
 Z}_+^n}\hat{a}_{\bf j}^{(i)}u_1^{j_1}\cdots u_n^{j_n}\\
 & & \hat{B}_i(u_1,\cdots,u_n)=\sum_{{\bf j}\in {\bf
 Z}_+^n}\hat{b}_{\bf j}^{(i)}u_1^{j_1}\cdots u_n^{j_n},\ \
 (i=1,\cdots,n)
 \end{eqnarray*}
 then
 \begin{eqnarray*}
 & &
 \hat{A}(u_1,\cdots,u_n)=A(q_1u_1,\cdots,q_nu_n)-A(q_1,\cdots,q_n)\\
 & & \hat{B}_i(u_1,\cdots,u_n)=B_i(q_1u_1,\cdots,q_nu_n)\ \
 (i=1,\cdots,n)
 \end{eqnarray*}
 and
 $$
 \hat{B}_i(1,\cdots,1)=\hat{A}(1,\cdots,1)=0;\ \ (i=1,\cdots,n).
 $$
 \par
 Moreover, since $(q_1,\cdots,q_n)<{\bf 1}$, we further have
 \begin{eqnarray*}
 & 0<\hat{A}_{j}(1,\cdots,1)=q_jA_{j}(q_1,\cdots,q_n)<+\infty
 \end{eqnarray*}
 and by Theorem~\ref{th 2.1}
 \begin{eqnarray*}
& \rho_{\hat{B}}(1,\cdots,1)\leq 0.
 \end{eqnarray*}
Hence, by Theorem~\ref{th4.1} we know that $\hat{P}(t)$ is
recurrent, i.e.,
$$
\int_0^\infty \hat{p}_{\bf ii}(t)dt=
 \int_0^\infty e^{-A(q_1,\cdots,q_n)t}p_{\bf
 ii}(t)dt=\infty.
 $$
 Therefore, $\lambda_Z\leq
 -A(q_1,\cdots,q_n)$.
\par
  If $(q_1,\cdots,q_n)={\bf 1}$, then for any $\varepsilon>0$,
  define
$$
q_{{\bf i j}}^{(\varepsilon)}
   =\begin{cases}
   a_{\bf j}\cdot\chi_{_{{\bf Z}_+^n}}({\bf j}),
    &  \mbox{if \ $|{\bf i}|=0$}\\
   \sum_{k=1}^ni_kb^{(k)(\varepsilon)}_{{\bf j}-{\bf i}+{\bf e}_k}\cdot \chi_{_{{\bf Z}_+^n}}({\bf j}-{\bf i}+{\bf e}_k)
   +a_{\bf j-i}\cdot\chi_{_{{\bf Z}_+^n}}({\bf j}-{\bf i}),
    &  \mbox{if \ $|{\bf i}|>0$}\\
     0,              & \mbox{otherwise}
    \end{cases}
$$
where $b_{\bf j}^{(k)(\varepsilon)}=b_{\bf j}^{(k)}-\varepsilon
\delta_{{\bf j},{\bf e}_k}$. It is easy to see that
$Q^{(\varepsilon)}=(q_{\bf ij}^{(\varepsilon)};{\bf i},{\bf j}\in
{\bf Z}_+^n)$ is a nonconservative $n$TBI $q$-matrix.
\par
For any $i=1,\cdots,n$, define
\begin{eqnarray*}
B_i^{(\varepsilon)}(u_1,\cdots,u_n) =\sum_{{\bf j}\in{\bf
Z}_+^n}b_{\bf j}^{(i)(\varepsilon)}u_1^{j_1}\cdots u_n^{j_n}
=B_i(u_1,\cdots,u_n)-\varepsilon u_i
\end{eqnarray*}
then we know that the equation
$B_i^{(\varepsilon)}(u_1,\cdots,u_n)=0$ has the minimal nonnegative
solution
$(q_1^{(\varepsilon)},\cdots,q_n^{(\varepsilon)})\in[0,1)^n$.
Moreover, $(q_1^{(\varepsilon)},\cdots,q_n^{(\varepsilon)})\uparrow
(1,\cdots,1)$ as $\varepsilon\downarrow 0$.
\par
Let $ P^{(\varepsilon)}(t)=(p_{\bf ij}^{(\varepsilon)};{\bf i},{\bf
j}\in {\bf Z}_+^n) $ be the Feller minimal
$Q^{(\varepsilon)}$-function, then $p_{\bf
ij}^{(\varepsilon)}(t)\leq p_{\bf ij}(t)$. Indeed, the Feller
minimal $Q^{(\varepsilon)}$-resolvent
$\Phi^{(\varepsilon)}(\lambda)=(\phi^{(\varepsilon)}_{\bf
ij}(\lambda);{\bf i},{\bf j}\in {\bf Z}_+^n)$ is the minimal
nonnegative solution of the Kolmogorov backward equation
$$
\phi_{\bf ij}^{(\varepsilon)}(\lambda)=\frac{\delta_{\bf
ij}}{\lambda+q_{\bf i}^{(\varepsilon)}}+\sum\limits_{{\bf k}\neq
{\bf i}}\frac{q_{\bf ik}^{(\varepsilon)}}{\lambda+q_{\bf
i}^{(\varepsilon)}}\phi_{\bf kj}^{(\varepsilon)}(\lambda),\ \ {\bf
i}\in {\bf Z}_+^n.
$$
Since the Feller minimal $Q$-resolvent $\Phi(\lambda)=(\phi_{\bf
ij}(\lambda);{\bf i},{\bf j}\in {\bf Z}_+^n)$
 is the minimal nonnegative solution
of the Kolmogorov backward equation
$$
\phi_{\bf ij}(\lambda)=\frac{\delta_{\bf ij}}{\lambda+q_{\bf
i}}+\sum\limits_{{\bf k}\neq {\bf i}}\frac{q_{\bf
ik}}{\lambda+q_{\bf i}}\phi_{\bf kj}(\lambda),\ \ {\bf i}\in {\bf
Z}_+^n.
$$
Since $q_{\bf ik}^{(\varepsilon)}= q_{\bf ik},\forall {\bf i}\neq
{\bf k}$ and $q_{\bf i}^{(\varepsilon)}=q_{\bf i}-\sum_{k=1}^n {\bf
i}_k\varepsilon,{\bf i}\in{\bf Z}_+^n$. Thus
$$
\phi_{\bf ij}^{(\varepsilon)}(\lambda)\leq\phi_{\bf ij}(\lambda)
$$ for any
${\bf i},{\bf j}\in{\bf Z}_+^n$. Therefore, $p_{\bf
ij}^{(\varepsilon)}(t)\leq p_{\bf ij}(t)$. From the above,we know
$\lambda_Z^{(\varepsilon)}=-A(q_1^{(\varepsilon)},\cdots,q_1^{(\varepsilon)})$
is the decay parameter of $P^{(\varepsilon)}(t)$. Therefore, we have
$\lambda_Z\leq
\lambda_Z^{(\varepsilon)}=-A(q_1^{(\varepsilon)},\cdots,q_1^{(\varepsilon)})$.
Letting $\varepsilon\downarrow 0$ yields
$\lambda_Z\leq-A(1,\cdots,1)=-A(q_1,\cdots,q_n)$. \hfill$\Box$
\end{proof}

\par
Having given the decay parameter, we now consider the
$\lambda_Z$-invariant vectors/ measures and quasi-stationary
distribution. We first consider the $\lambda_Z$-invariant vectors.
From now on, we shall assume that $Q$ is conservative and ${\bf
Z}_+^n$ is communicating.
\begin{theorem}
\label{th6.3} Suppose that the $q$-matrix $Q$ as defined in
$(\ref{5.1})$--$(\ref{5.2})$, Let $P(t)=(p_{{\bf i j}}(t);{\bf
i,j}\in{\bf Z}_+^n)$ be the Feller minimal $Q$-function and
$\lambda_Z$ be the decay parameter of ${\bf Z}_+^n$. Then a
$\lambda_Z$-invariant vector $(y_{\bf j};{\bf j}\in {\bf Z}_+^n)$
for $Q$ {\rm{(}}or for $P(t)${\rm{)}} on ${\bf Z}_+^n$ is given by
\begin{eqnarray}\label{6.2}
 y_{\bf
j}=q_1^{j_1}\cdots q_n^{j_n},\ \ \ \ {\bf j}=(j_1,\cdots,j_n)\in
{\bf Z}_+^n
\end{eqnarray}
where $(q_1,\cdots,q_n)$ is the smallest nonnegative solution of
$(\ref{2.1})$.
\end{theorem}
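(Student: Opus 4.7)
The plan is to verify the two defining identities separately: first that $(y_{\bf j})$ is $\lambda_Z$-invariant for $Q$ on ${\bf Z}_+^n$, then that it is $\lambda_Z$-invariant for $P(t)$. Both exploit the identification $\lambda_Z=-A(q_1,\ldots,q_n)$ from Theorem~\ref{th6.2} together with the defining relations $B_k(q_1,\ldots,q_n)=0$ ($k=1,\ldots,n$), which say that $(q_1,\ldots,q_n)$ is a root of $(\ref{2.1})$. Communicating of ${\bf Z}_+^n$ ensures $q_k>0$, so the proposed $(y_{\bf j})$ is strictly positive.

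For the $Q$-invariance the computation is essentially carried out already in the proof of Theorem~\ref{th6.1}: substituting $y_{\bf k}=q_1^{k_1}\cdots q_n^{k_n}$ into $\sum_{{\bf k}\in{\bf Z}_+^n}q_{\bf jk}y_{\bf k}$ with $q_{\bf jk}$ as in $(\ref{5.1})$ shows by a direct algebraic manipulation that the branching contributions sum to $\sum_{i=1}^n j_iB_i({\bf q})\,q_1^{j_1}\cdots q_i^{j_i-1}\cdots q_n^{j_n}=0$ and the immigration contribution is $A({\bf q})y_{\bf j}$, yielding $\sum_{\bf k}q_{\bf jk}y_{\bf k}=A({\bf q})y_{\bf j}=-\lambda_Z y_{\bf j}$ for every ${\bf j}\in{\bf Z}_+^n$. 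This is exactly the equality version of $(1.4)$ with $\mu=\lambda_Z$.

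For the $P(t)$-invariance I would pass to generating functions via the forward-equation identity of Lemma~\ref{le 2.3}. Under the standing assumption $h_{\bf j}=a_{\bf j}$ we have $H\equiv A$, so $(\ref{2.2})$ collapses to
\begin{eqnarray*}
\frac{\partial F_{\bf i}(t,{\bf u})}{\partial t}=A({\bf u})\,F_{\bf i}(t,{\bf u})+\sum_{k=1}^n B_k({\bf u})\,\frac{\partial F_{\bf i}(t,{\bf u})}{\partial u_k}.
\end{eqnarray*}
Substituting ${\bf u}={\bf q}$ makes the transport term vanish (because $B_k({\bf q})=0$), leaving the linear scalar ODE $\partial_tF_{\bf i}(t,{\bf q})=A({\bf q})\,F_{\bf i}(t,{\bf q})$ with initial datum $F_{\bf i}(0,{\bf q})=q_1^{i_1}\cdots q_n^{i_n}=y_{\bf i}$. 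Integrating gives $\sum_{\bf j}p_{\bf ij}(t)\,y_{\bf j}=F_{\bf i}(t,{\bf q})=e^{A({\bf q})t}y_{\bf i}=e^{-\lambda_Z t}y_{\bf i}$, which is the equality version of $(1.5)$ for vectors.

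The only technical wrinkle is the boundary case ${\bf q}={\bf 1}$ (which occurs precisely when $\rho(1,\ldots,1)\leq 0$), because Lemma~\ref{le 2.3} is stated for ${\bf u}\in[0,1)^n$ rather than on the closed cube. Here one passes to the limit ${\bf u}\uparrow{\bf 1}$ along any interior path, using continuity of $F_{\bf i}$ on $[0,1]^n$ and the honesty of $P(t)$ guaranteed by Lemma~\ref{le 2.4}. I do not foresee any substantive obstacle: the proof is essentially a clean synthesis of Theorems~\ref{th6.1} and \ref{th6.2} with the generating-function identity of Lemma~\ref{le 2.3}, and the structural reason it works is that ${\bf q}$ is precisely the characteristic value at which the transport part of the forward PDE switches off.
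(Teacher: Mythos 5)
Your proposal is correct and follows essentially the same route as the paper: the $Q$-invariance is exactly the computation already done in Theorem~\ref{th6.1}, and the $P(t)$-invariance comes from evaluating the forward-equation generating-function identity at ${\bf u}={\bf q}$, where the $B_k$ terms vanish and what remains integrates to $e^{-\lambda_Z t}y_{\bf i}$. The only cosmetic difference is that the paper first integrates the forward equation in $t$ and then substitutes ${\bf u}={\bf q}$ (using Anderson's Proposition 5.4.1 for an a priori bound), whereas you substitute first and solve the scalar ODE, handling the boundary case ${\bf q}={\bf 1}$ (where the claim reduces to honesty) by a limit.
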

\par
\begin{proof}
By Theorem~\ref{th6.1} we know that $(y_{\bf j};{\bf j}\in {\bf
Z}_+^n)$ is a $\lambda_Z$-invariant vector for $Q$ on ${\bf Z}_+^n$.
Therefore, it suffices to show that it is also $\lambda_Z$-invariant
for $P(t)$ on ${\bf Z}_+^n$. Indeed, by Proposition $5.4.1$ in
Anderson~\cite{-Anderson-p}, we know that for any ${\bf i}\in {\bf
Z}_+^n$ and $t\geq 0$,
\begin{eqnarray}\label{6.3}
 \sum_{{\bf j}\in {\bf
Z}_+^n\setminus {\bf 0}}p_{\bf ij}(t)y_{\bf j}\leq e^{-\lambda_Z
t}y_{\bf i}.
\end{eqnarray}
Hence, it follows from Kolmogorov forward equations that for ${\bf
i}\in {\bf Z}_+^n,\ (u_1,\cdots,u_n)\in [0,q_1]\times\cdots\times
[0,q_n]$,
\begin{eqnarray}\label{6.4}
\sum_{{\bf j}\in {\bf Z}_+^n}p'_{{\bf i}{\bf j}}(t)u_1^{j_1}\cdots
u_n^{j_n}&=&\sum_{j=1}^n B_j(u_1,\cdots,u_n)\sum_{{\bf k}\in {\bf
Z}_+^n\setminus {\bf 0}}p_{\bf ik}(t)k_j u_1^{k_1}\cdots
u_j^{k_j-1}\cdots u_n^{k_n} \nonumber \\
& &+A(u_1,\cdots,u_n)\sum_{{\bf k}\in {\bf Z}_+^n}p_{{\bf i}{\bf
k}}(t)u_1^{k_1}\cdots u_n^{k_n}
\end{eqnarray}
Therefore
$$
\aligned  \sum_{{\bf j}\in {\bf Z}_+^n}p_{{\bf i}{\bf
j}}(t)u_1^{j_1}\cdots u_n^{j_n}-u_1^{i_1}\cdots u_n^{i_n}
=&\sum_{j=1}^n B_j(u_1,\cdots,u_n)\int_0^t\sum_{{\bf k}\in {\bf
Z}_+^n\setminus {\bf 0}}p_{\bf ik}(s)k_j u_1^{k_1}\cdots
u_j^{k_j-1}\cdots u_n^{k_n}ds\\
&+A(u_1,\cdots,u_n)\int_0^t\sum_{{\bf k}\in {\bf Z}_+^n}p_{{\bf
i}{\bf k}}(s)u_1^{k_1}\cdots u_n^{k_n}ds
\endaligned
$$
Let $u_i=q_i(i=1,\cdots,n)$ in the above equation, we further have
\begin{eqnarray*}
\sum_{{\bf j}\in {\bf Z}_+^n}p_{{\bf i}{\bf j}}(t)q_1^{j_1}\cdots
q_n^{j_n}&-&q_1^{i_1}\cdots
q_n^{i_n}=A(q_1,\cdots,q_n)\int_0^t\sum_{{\bf k}\in {\bf
Z}_+^n}p_{{\bf i}{\bf k}}(s)q_1^{k_1}\cdots q_n^{k_n}ds
\end{eqnarray*}
i.e.
\begin{eqnarray}\label{6.5}
\sum_{{\bf j}\in {\bf Z}_+^n}p_{{\bf i}{\bf j}}(t)-y_{\bf
i}=-\lambda_{Z}\int_0^t\sum_{{\bf j}\in {\bf Z}_+^n}p_{{\bf
ij}}(s)y_{\bf j}ds.
\end{eqnarray}
Therefore
$$
\sum_{{\bf j}\in {\bf Z}_+^n}p_{{\bf ij}}(s)y_{\bf
j}=e^{-\lambda_{Z}t}y_{\bf i}.
$$
which implies that $(y_{\bf j};{\bf j}\in {\bf Z}_+^n)$ is a
$\lambda_Z$-invariant for $Q${\rm{(}}or for $P(t)${\rm{)}} on ${\bf
Z}_+^n$.\hfill$\Box$
\end{proof}
\par
The above Theorem gives a $\lambda_Z$-invariant vector for $Q$
{\rm{(}}or for $P(t)${\rm{)}} on ${\bf Z}_+^n$. We next consider the
$\lambda_Z$-invariant measures for $Q$ {\rm{(}}or for $P(t)${\rm{)}}
on ${\bf Z}_+^n$.

\par
\begin{theorem}
\label{th6.4} Suppose that $q$-matrix $Q$ defined in {\rm
(4.1)--(4.2)} is conservative, $G(1,\cdots,1)$ is positively regular
and $\{B_i(u_1,\cdots,u_n); 1\leq i\leq n\}$ is nonsingular. Let
$P(t)=(p_{{\bf i j}}(t);{\bf i,j}\in{\bf Z}_+^n)$ be the Feller
minimal $Q$-function and $\lambda_Z$ be the decay parameter of ${\bf
Z}_+^n$. Then for any $\lambda \in [0,\lambda_Z]$,
\par
{\rm (i)}\ There exists a $\lambda$-invariant measure $(m_{\bf
i};{\bf i}=(i_1,\cdots,i_n)\in {\bf Z}_+^n)$ for $Q$ on ${\bf
Z}_+^n$. Moreover, the generating function of this
$\lambda$-invariant measure $M(u_1,\cdots,u_n)=\sum_{{\bf i}\in {\bf
Z}_+^n}m_{\bf i}u_1^{i_1}\cdots u_n^{i_n}$ satisfies the following
partial differential equation
\begin{eqnarray}\label{6.6}
\sum_{i=1}^nB_i(u_1,\cdots,u_n)M_{u_i}(u_1,\cdots,u_n)+(\lambda+A(u_1,\cdots,u_n))
M(u_1,\cdots,u_n)=0.
 \end{eqnarray}
\par
{\rm (ii)}\ This measure $(m_{\bf i};{\bf i}=(i_1,\cdots,i_n)\in
{\bf Z}_+^n)$ is also $\lambda$-invariant for $P(t)$ on ${\bf
Z}_+^n$.
\par
{\rm (iii)}\ For $\lambda \leq \lambda_Z$, this $\lambda$-invariant
measure is convergent{\rm (}i.e., $\sum_{{\bf i}\in {\bf
Z}_+^n}m_{\bf i}<\infty${\rm )} if and only if $\lambda=\lambda_Z,\
\rho(1,\cdots,1)\leq 0$ and
$$
  \int_0^1\frac{\lambda+A(u,u_2(u),\cdots,u_n(u))}{B_1(u,u_2(u),\cdots,u_n(u))}du>-\infty
$$
where $u_k(u)\ (k=2,\cdots,n)$ are defined in Theorem~\ref{th3.1}.
\end{theorem}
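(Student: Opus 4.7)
\medskip

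\noindent\emph{Proof proposal.} The plan is to treat (i) by translating $\lambda$-invariance into a first-order PDE for the generating function and solving along the characteristic built in Theorem~\ref{th3.1}. First, multiplying the defining equation $\sum_{{\bf i}\in {\bf Z}_+^n} m_{\bf i}q_{{\bf ij}}=-\lambda m_{\bf j}$ by $u_1^{j_1}\cdots u_n^{j_n}$ and summing (the same manipulation used to derive (2.2)) shows that $M(u_1,\ldots,u_n)=\sum_{\bf j}m_{\bf j}u_1^{j_1}\cdots u_n^{j_n}$ is $\lambda$-invariant for $Q$ on ${\bf Z}_+^n$ if and only if $M$ satisfies the PDE (6.6); the reduction is straightforward and relies only on the structure (5.1). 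Conversely, the coefficients of any analytic solution of (6.6) with $M(0,\ldots,0)>0$ provide a candidate measure, and positivity of the coefficients follows from reading (6.6) as a recurrence in $|{\bf j}|$ whose coefficients on the right-hand side are nonnegative (since $q_{\bf ij}\ge 0$ for ${\bf i}\neq {\bf j}$) once the leading term $m_{\bf 0}$ is fixed; this yields existence.

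To produce an explicit $M$, I would apply the method of characteristics along the curve $(u,u_2(u),\ldots,u_n(u))$ from Theorem~\ref{th3.1}. Setting $\widetilde{M}(u):=M(u,u_2(u),\ldots,u_n(u))$ and using $u_k'(u)=B_k/B_1$ reduces (6.6) to the scalar ODE
$$
B_1(u,u_2(u),\ldots,u_n(u))\,\widetilde{M}'(u)+\bigl(\lambda+A(u,u_2(u),\ldots,u_n(u))\bigr)\widetilde{M}(u)=0,
$$
whose solution is $\widetilde{M}(u)=\widetilde{M}(0)\exp\bigl(-\int_0^{u}\tfrac{\lambda+A(x,u_2(x),\ldots,u_n(x))}{B_1(x,u_2(x),\ldots,u_n(x))}\,dx\bigr)$. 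Extending off the characteristic is done by choosing boundary data on a transverse $(n-1)$-surface and invoking Cauchy--Kowalevski, or alternatively by the recursion described above; the recursion also confirms that $M$ is uniquely determined once the scalar function $\widetilde{M}$ on the characteristic is fixed (because the derivative terms $B_k M_{u_k}$ can be resolved in order of increasing $|{\bf j}|$ using the irreducibility/nonsingularity hypotheses (A-1)--(A-3)).

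For (ii), I would argue as in Proposition~5.4.1 of Anderson~\cite{-Anderson-p}: given the $\lambda$-invariance of $(m_{\bf j})$ for $Q$ on ${\bf Z}_+^n$ and the fact that $P(t)$ is the unique (hence honest for $\lambda$-invariance purposes) Feller minimal process from Theorem~\ref{th 2.1}, one integrates Kolmogorov's forward equation against $(m_{\bf j})$, applies Fubini (justified by the conservative hypothesis), and obtains $\sum_{\bf i}m_{\bf i}p_{{\bf ij}}(t)=e^{-\lambda t}m_{\bf j}$; the essential step is showing that the finiteness issue does not obstruct the integration, which is where conservativity plays its role. The argument mirrors the way (6.5) was derived in the proof of Theorem~\ref{th6.3}.

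For (iii), convergence $\sum_{\bf i}m_{\bf i}<\infty$ is exactly $M(1,\ldots,1)<\infty$; evaluating $\widetilde{M}$ as $u\uparrow q_1$ and using $u_k(q_1)=q_k$ from Theorem~\ref{th3.1}(iii) gives
$$
M(q_1,\ldots,q_n)=\exp\Bigl(-\int_0^{q_1}\frac{\lambda+A(x,u_2(x),\ldots,u_n(x))}{B_1(x,u_2(x),\ldots,u_n(x))}\,dx\Bigr).
$$
When $\rho(1,\ldots,1)\le 0$ we have ${\bf q}={\bf 1}$ by Lemma~\ref{le 2.2}, so convergence reduces to the stated integral condition. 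When $\rho(1,\ldots,1)>0$, ${\bf q}\in(0,1)^n$ and the path terminates inside the polydisc, so reaching $(1,\ldots,1)$ requires crossing a region where the coefficient of the exponential blows up; a comparison using Lemma~\ref{le 2.1} shows divergence. Finally, for $\lambda<\lambda_Z=-A(q_1,\ldots,q_n)$ (Theorem~\ref{th6.2}), the integrand behaves like $(\lambda-\lambda_Z)/B_1$ near the endpoint, so the integral diverges to $-\infty$ and $M$ blows up; this rules out $\lambda<\lambda_Z$, leaving $\lambda=\lambda_Z$ as the only possibility. The main obstacle will be controlling the blow-up of the integrand at the endpoint of the characteristic curve and justifying the coefficient-by-coefficient positivity of $M$ in step (i)---both require careful use of the nonsingularity and positive regularity hypotheses.
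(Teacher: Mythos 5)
There is a genuine gap in your treatment of part (i), and it is the heart of the theorem. The paper does \emph{not} obtain the $\lambda$-invariant measure by solving the PDE (6.6); it constructs the measure first, probabilistically, by setting $m^{({\bf i})}_{\bf j}=\bigl(\int_0^{\infty}e^{\lambda t}p_{{\bf i}{\bf 0}}(t)\,dt\bigr)^{-1}\int_0^{\infty}e^{\lambda t}p_{{\bf i}{\bf j}}(t)\,dt$ for $\lambda<\lambda_Z$ (these integrals are finite precisely because $\lambda$ is below the decay parameter), showing via the forward equations that these quantities satisfy the balance relations (6.9)--(6.10) up to an error term, extracting a limit to get an exact solution of $\lambda m_{\bf j}+\sum_{{\bf k}}m_{\bf k}q_{{\bf k}{\bf j}}=0$, and getting strict positivity for free from positivity of $p_{{\bf i}{\bf j}}(t)$ and irreducibility; the PDE (6.6) is then \emph{derived} from the measure, not used to produce it. Your route --- solve (6.6) by characteristics plus Cauchy--Kowalevski and check coefficient positivity by a recurrence --- fails at two points. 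First, a solution of a first-order PDE in $n$ variables is not determined by its values on a single characteristic curve $(u,u_2(u),\ldots,u_n(u))$; it requires data on a transverse $(n-1)$-dimensional surface, for which there is no canonical positive choice, so your claim that the recursion ``confirms that $M$ is uniquely determined once $\widetilde M$ on the characteristic is fixed'' is false. Second, the coefficient identity $\lambda m_{\bf j}+\sum_{{\bf k}\le{\bf j}+{\bf 1}}m_{\bf k}q_{{\bf k}{\bf j}}=0$ is \emph{not} a well-founded recurrence in $|{\bf j}|$: because of the death rates $q_{{\bf j}+{\bf e}_l,\,{\bf j}}=(j_l+1)b^{(l)}_{\bf 0}\ge 0$, the equation at state ${\bf j}$ involves the $n$ unknowns at level $|{\bf j}|+1$ as well as other states at level $|{\bf j}|$, so each level is underdetermined and nonnegativity of a solution is exactly the nontrivial point. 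This is why the paper needs the resolvent construction.

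Parts (ii) and (iii) are closer to the paper but still underpowered. For (ii), Anderson's Proposition 5.4.1 and a Fubini argument only give $\lambda$-\emph{sub}invariance for $P(t)$; the paper upgrades this to equality by showing $g_{\bf j}(t)=\sum_{\bf i}m_{\bf i}p_{{\bf i}{\bf j}}(t)$ is continuous, that the termwise-differentiated series converges uniformly on bounded intervals, and hence that $g_{\bf j}'(t)=-\lambda g_{\bf j}(t)$ --- this interchange is the step your sketch glosses over. For (iii), your reduction along the characteristic in the case $\rho(1,\ldots,1)\le 0$ matches the paper, but note that in that case ${\bf q}={\bf 1}$ and $\lambda_Z=-A(1,\ldots,1)=0$, so the constraint $\lambda\in[0,\lambda_Z]$ already forces $\lambda=\lambda_Z$; your asymptotic analysis of ``$\lambda<\lambda_Z$ near the endpoint'' addresses a vacuous case, while the actual content is the contradiction the paper derives from (6.12) at $(1,\ldots,1)$ when $\rho(1,\ldots,1)>0$.
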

\par
\begin{proof}
We first assume that $\lambda\in [0,\lambda_Z)$. It follows from
Kolmogorov forward equation that for any ${\bf i},{\bf j}\in {\bf
Z}_+^n$,
\begin{eqnarray*}
 p'_{{\bf i j}}(t)=\sum_{{\bf k}\in{\bf Z}_+^n}p_{{\bf i k}}(t)q_{{\bf k j}}.
\end{eqnarray*}
Therefore,
\begin{eqnarray}\label{6.7}
 \ \ \lambda \int_0^{\infty}e^{\lambda t}p_{{\bf i
0}}(t)dt+a_{\bf 0}\int_0^{\infty}e^{\lambda t}p_{{\bf i
0}}(t)dt+\sum_{j=1}^n b^{(j)}_{\bf 0}\int_0^{\infty}e^{\lambda
t}p_{{\bf i e}_j}(t)dt=-\delta_{\bf i 0},
\end{eqnarray}
and for ${\bf j}\in {\bf Z}_+^n\setminus{\bf 0}$,
\begin{eqnarray}\label{6.8}
 \ \lambda \int_0^{\infty}e^{\lambda
t}p_{{\bf i j}}(t)dt+\sum_{{\bf k}\in {\bf Z
}_+^n}(\int_0^{\infty}e^{\lambda t}p_{{\bf i k}}(t)dt)q_{{\bf k
j}}=-\delta_{{\bf i j}}.
\end{eqnarray}
Denote $m^{({\bf i})}_{\bf j}=(\int_0^{\infty}e^{\lambda t}p_{{\bf i
0}}(t)dt)^{-1}\cdot\int_0^{\infty}e^{\lambda t}p_{{\bf i j}}(t)dt$
and $\varepsilon^{({\bf i})}_{\bf j}=(\int_0^{\infty}e^{\lambda
t}p_{{\bf i 0}}(t)dt)^{-1}\cdot\delta_{\bf i j}$, then (\ref{6.7})
and (\ref{6.8}) can be rewritten as
\begin{eqnarray}\label{6.9}
(\lambda +a_{\bf 0})m^{({\bf i})}_{\bf 0}+\sum_{j=1}^n m^{({\bf
i})}_{{\bf e}_j}b^{(j)}_{\bf0}=-\varepsilon^{({\bf i})}_{{\bf 0}}
\end{eqnarray}
and for ${\bf j}\in {\bf Z}_+^n\setminus{\bf 0}$,
\begin{eqnarray}\label{6.10}
 \ \lambda m^{({\bf i})}_{\bf
j}+\sum_{{\bf k}\leq {\bf j}+{\bf 1}}m^{({\bf i})}_{\bf k}q_{{\bf k
j}}=-\varepsilon^{({\bf i})}_{{\bf j}}.
\end{eqnarray}
Let $H=\{l\geq 0;\ b^{(l)}_{\bf 0}=0\}\neq \emptyset$, then by the
irreducibility we know that
\par
(a)\ for any $l\in H$, there exists ${\bf k}$ such that $q_{{\bf
k}e_l}>0$ and ${\bf k}={\bf 0}$ or ${\bf k}=e_i$ for some $i\neq l$
or ${\bf k}=e_l+e_i$ for some $i\neq l$.
\par
(b)\ there exists ${\bf k}\in \{e_l;l\in H\}^c$ such that $q_{{\bf
k}e_l}>0$ for some $l\in H$.
\par
By (a), (b) and note that $m^{(\bf i)}_{\bf 0}=1$ and $m^{({\bf
i})}_{{\bf e}_ j}\geq 0 (j=1,\cdots,n)$, it can be seen from
(\ref{6.9}) and (\ref{6.10}) that there exist $(m_{\bf i};{\bf
i}=(i_1,\cdots,i_n)\in {\bf Z}_+^n)$ which is nonnegative and finite
such that
\begin{eqnarray}\label{6.11}
 \ \lambda m_{\bf
j}+\sum_{{\bf k}\leq {\bf j+1}}m_{\bf k}q_{{\bf k j}}=0,\ \ {\bf
j}\in {\bf Z}_+^n.
\end{eqnarray}
Now we claim that all $m_{\bf j}\ ({\bf j}\in {\bf Z}_+^n)$ are
positive. Indeed, note that $m_{{\bf 0}}>0$. If $m_{ \tilde{\bf
j}}=0$ for some $\tilde{{\bf j}}\in {\bf Z}_+^n$, then by the
irreducibility of ${\bf Z}_+^n$, we know that there exists ${\bf
j}_0={\bf 0}, {\bf j}_1, \cdots, {\bf j}_n=\tilde{{\bf j}}\in {\bf
Z}_+^n$ such that
$$
q_{{\bf j}_k {\bf j}_{k+1}}>0, \ \ \ k=0,\cdots, n-1.
$$
Hence by repeatedly using (\ref{6.11}) we know that $m_{{\bf 0}}=0$,
which is a contradiction. Therefore $(m_{\bf j};{\bf j}\in {\bf
Z}_+^n)$ is a $\lambda_Z$-invariant  measure for $Q$ on ${\bf
Z}_+^n$. By letting $\lambda\uparrow \lambda_Z$ in (\ref{6.11}) and
a similar argument as above, we get a $\lambda_Z$-invariant measure
for $Q$ on ${\bf Z}_+^n$.
\par
Since  $\lambda <-a_{\bf 0}$, multiplying $u_1^{j_1}\cdots
u_n^{j_n}$ on both sides of (\ref{6.11}) and summing over ${\bf
j}\in {\bf Z}_+^n$ yields that for $|u_1|,\cdots,|u_n|<(-a_{\bf
0}-\lambda)(\max\{b^{(i)}_{\bf 0};i=1,\cdots,n\})^{-1}$,
\begin{eqnarray}\label{6.12}
\sum_{i=1}^nB_i(u_1,\cdots,u_n)M_{u_i}(u_1,\cdots,u_n)+(\lambda+A(u_1,\cdots,u_n))
M(u_1,\cdots,u_n)=0
\end{eqnarray}
where $M(u_1,\cdots,u_n)=\sum_{{\bf j}\in {\bf Z}_+^n}m_{\bf
j}u_1^{j_1}\cdots u_n^{j_n}$. Since there exists
$(u_1,\cdots,u_n)\uparrow (q_1,\cdots,q_n)$ such that $
B_i(u_1,\cdots,u_n)>0$,
  it is easily seen that (\ref{6.12}) holds for $(u_1,\cdots,u_n)\in [0,q_1)\times\cdots\times
  [0,q_n)$. (i) is proved.
\par
Next, we prove (ii). Denote $g_{\bf j}(t)=\sum_{\bf i\in {\bf
Z}_+^n}m_{\bf i}p_{\bf ij}(t),\ {\bf j}\in {\bf Z}_+^n$. Then
$$
\aligned
   \sum_{{\bf k}\in {\bf Z}_+^n}g_{\bf k}(t)q_{\bf kj}&=\sum_{{\bf k}\in {\bf Z}_+^n}\sum_{{\bf i}\in {\bf Z}_+^n}
   m_{\bf i}p_{\bf ik}(t)q_{\bf kj}=\sum_{{\bf i}\in {\bf Z}_+^n}m_{\bf i}p'_{\bf ij}(t)\\
   &=\sum_{{\bf i}\in {\bf Z}_+^n}m_{\bf i}\sum_{{\bf k}\in {\bf Z}_+^n}q_{\bf ik}p_{\bf kj}(t)
   =\sum_{{\bf k}\in {\bf Z}_+^n}\sum_{{\bf i}\in {\bf Z}_+^n}m_{\bf i}q_{\bf ik}p_{\bf kj}(t)\\
   &=-\lambda
   g_{\bf j}(t)
   \endaligned
$$
and hence $(g_{\bf j}(t);{\bf j}\neq {\bf 0})$ is also a
$\lambda$-invariant measure for $Q$. On the other hand, it follows
from the Kolomogorov forward equation we have
$$
p_{\bf ij}(t)-\delta_{\bf ij}=\sum_{{\bf k}\in {\bf Z}_+^n}\int_0^t
p_{\bf ik}(u)du\cdot q_{\bf kj}
$$
Therefore,
$$
\aligned \sum_{{\bf i}\in {\bf Z}_+^n} m_{\bf i}|p_{\bf ij}(t+\Delta
t)-p_{\bf ij}(t)|&\leq \sum_{{\bf i}\in {\bf Z}_+^n} m_{\bf
i}\sum_{{\bf k}\leq {\bf j}+{\bf 1}}\int_t^{t+\Delta
t}p_{\bf ik}(u)du\cdot |q_{\bf kj}|\\
&=\sum_{{\bf k}\leq {\bf j}+{\bf 1}}\int_t^{t+\Delta t}\sum_{{\bf
i}\in {\bf Z}_+^n}
m_{\bf i}p_{\bf ik}(u)du\cdot |q_{\bf kj}|\\
&\leq\sum_{{\bf k}\leq {\bf j}+{\bf 1}}\int_t^{t+\Delta t}e^{-\lambda u} m_{\bf k}du\cdot|q_{\bf kj}|\\
&=\int_t^{t+\Delta t}e^{-\lambda u}du\cdot\sum_{{\bf k}\leq {\bf j}+{\bf 1}}m_{\bf k}|q_{\bf kj}|\\
&\rightarrow 0
\endaligned
$$
as $\Delta t\rightarrow 0$ since $\sum_{{\bf k}\leq {\bf j}+{\bf
1}}m_{\bf k}|q_{\bf kj}|$ is finite. Thus,
  $g_{\bf j}(t)=\sum_{{\bf i}\in {\bf
Z}_+^n}m_{\bf i}p_{\bf ij}(t)$ is a continuous function of $t\in
[0,\infty)$ and hence
$$
  \sum_{{\bf i}\in {\bf
Z}_+^n}m_{\bf i}p'_{\bf ij}(t)=\sum_{{\bf i}\in {\bf
Z}_+^n}\sum_{{\bf k}\leq {\bf j}+{\bf 1}}m_{\bf i}p_{\bf
ik}(t)q_{\bf kj}
$$
is also continuous. Therefore, by analysis theory we know that
$\sum_{{\bf i}\in {\bf Z}_+^n}m_{\bf i}p'_{\bf ij}(t)$ is uniformly
convergent on any bounded interval and hence
$$
g'_{\bf j}(t)=\sum_{{\bf i}\in {\bf Z}_+^n}m_{\bf i}p'_{\bf
ij}(t)=-\lambda
   g_{\bf j}(t),\ \ \forall t\geq 0
$$
which implying that
$$
  g_{\bf j}(t)=\sum_{{\bf i}\in {\bf
Z}_+^n}m_{\bf i}p_{\bf ij}(t)=g_{\bf j}(0)e^{-\lambda t}=m_{\bf
j}e^{-\lambda
  t}.
$$
Therefore, $(m_{\bf j};{{\bf j}\in {\bf Z}_+^n})$ is
$\lambda$-invariant for $P(t)$.
 \par
 Now we prove
(iii). Suppose that $\rho(1,\cdots,1)>0$. If $M(1,\cdots,1)<\infty$,
then $\lambda_Z=-A(q_1,\cdots,q_n)>0$ and (\ref{6.12}) holds for
$(u_1,\cdots,u_n)\in [0,1)^n$, furthermore,
\begin{eqnarray}
\label{6.13}
 \aligned
 \lim_{u_1\uparrow
1,\cdots,u_n\uparrow
1}\sum_{j=1}^nB_j(u_1,\cdots,u_n)M_{u_j}(u_1,\cdots,u_n)=0.
\endaligned
\end{eqnarray}
Letting $(u_1,\cdots,u_n)\uparrow(1,\cdots,1)$ in (\ref{6.12})
yields a contradiction.
 \par
Suppose that $\rho(1,\cdots,1)\leq 0$. By (\ref{6.12}) and
Theorem~\ref{th3.1}, we have
$$
 \frac{M'(u,u_2(u),\cdots,u_n(u))}{M(u,u_2(u),\cdots,u_n(u))}
 =-\frac{\lambda+A(u,u_2(u),\cdots,u_n(u))}{ B_1(u,u_2(u),\cdots,u_n(u))}
$$
where $u_k(u)\ (k=2,\cdots,n)$ are defined in Theorem~\ref{th3.1}
and $M'=\frac{dM}{du}$. Hence,
$$
  M(u,u_2(u),\cdots,u_n(u))=M_{\bf 0}
  e^{-\int_0^1\frac{\lambda+A(u,u_2(u),\cdots,u_n(u))}{ B_1(u,u_2(u),\cdots,u_n(u))}du}
$$
which implies the conclusion. \hfill$\Box$
\end{proof}
\par
Based on the $\lambda_Z$-invariant measure on ${\bf Z}_+^n$, we
finally present the quasi-stationary distributions for $P(t)$ on
${\bf Z}_+^n$.
\par
\begin{theorem}
\label{th6.5} Suppose that $q$-matrix $Q$ defined in {\rm
(4.1)--(4.2)} is conservative, $G(1,\cdots,1)$ is positively regular
and $\{B_i(u_1,\cdots,u_n); 1\leq i\leq n\}$ is nonsingular. Let
$P(t)=(p_{{\bf i j}}(t);{\bf i,j}\in{\bf Z}_+^n)$ be the Feller
minimal $Q$-function and $\lambda_Z$ be the decay parameter of ${\bf
Z}_+^n$. Then there exists a quasi-stationary distribution for
$P(t)$ on ${\bf Z}_+^n$ if and only if $\rho(1,\cdots,1)\leq 0$ and
$$
\int_0^1\frac{\lambda+A(u,u_2(u),\cdots,u_n(u))}{
B_1(u,u_2(u),\cdots,u_n(u))}du>-\infty.
$$
Moreover, if these conditions hold, then the quasi-stationary
distribution $\{(m_{\bf i};{\bf i}\in {\bf Z}_+^n)\}$ satisfies the
equation (\ref{6.6}) with $\lambda=\lambda_Z$.
\end{theorem}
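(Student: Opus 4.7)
My plan is to reduce Theorem~6.5 to Theorem~6.4 via the standard observation that a quasi-stationary distribution $(m_{\bf i};{\bf i}\in{\bf Z}_+^n)$ is precisely a probability measure that is $\mu$-invariant for $P(t)$ for some $\mu\geq 0$. For the ``if'' direction, I would assume the two stated conditions and apply Theorem~6.4(iii) to obtain a convergent $\lambda_Z$-invariant measure for $P(t)$; after normalising it to a probability measure, summing the invariance identity $\sum_{\bf i}m_{\bf i}p_{\bf ij}(t)=e^{-\lambda_Z t}m_{\bf j}$ over ${\bf j}$ yields $\sum_{\bf j}p_{\bf j}(t)=e^{-\lambda_Z t}$, whose ratio with $p_{\bf j}(t)$ immediately recovers $(1.6)$. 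That this $(m_{\bf i})$ satisfies $(6.6)$ with $\lambda=\lambda_Z$ is Theorem~6.4(i) applied at $\lambda=\lambda_Z$.

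For the ``only if'' direction, I would begin with a quasi-stationary distribution $(m_{\bf i})$, set $S(t)=\sum_{\bf j}p_{\bf j}(t)$, and use Definition~1.6 to write $p_{\bf j}(t)=m_{\bf j}S(t)$. The Chapman--Kolmogorov identity $p_{\bf j}(t+s)=\sum_{\bf k}p_{\bf k}(t)p_{\bf kj}(s)$ combined with this representation gives $m_{\bf j}S(t+s)=m_{\bf j}S(t)S(s)$, and continuity of $P(t)$ forces $S(t)=e^{-\mu t}$ for some $\mu\geq 0$. Hence $(m_{\bf i})$ is a $\mu$-invariant probability distribution for $P(t)$, and the pointwise bound $m_{\bf i_0}p_{\bf i_0 j}(t)\leq e^{-\mu t}m_{\bf j}$ (from the invariance identity with a single term on the left) immediately yields $\mu\leq\lambda_Z$.

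To extract the stated analytic conditions I would pass to generating functions: multiplying the invariance identity by $u_1^{j_1}\cdots u_n^{j_n}$, summing over ${\bf j}$ and differentiating at $t=0$ (exactly as in the derivation of $(6.12)$) shows that $M(u_1,\ldots,u_n)=\sum_{\bf i}m_{\bf i}u_1^{i_1}\cdots u_n^{i_n}$ solves $(6.6)$ with $\lambda$ replaced by $\mu$. Substituting the characteristic curve $u\mapsto(u,u_2(u),\ldots,u_n(u))$ supplied by Theorem~3.1 reduces the PDE to a linear ODE whose explicit solution is
\begin{equation*}
M(u,u_2(u),\ldots,u_n(u))=m_{\bf 0}\exp\left(-\int_0^u\frac{\mu+A(x,u_2(x),\ldots,u_n(x))}{B_1(x,u_2(x),\ldots,u_n(x))}dx\right).
\end{equation*}
Since $M(1,\ldots,1)=1<\infty$, the characteristic must reach $(1,\ldots,1)$, which by Theorem~3.1(iii) forces $q_1=1$, i.e.\ $\rho(1,\ldots,1)\leq 0$; and the integral must remain bounded as $u\uparrow 1$, which is precisely the condition $\int_0^1\frac{\mu+A}{B_1}du>-\infty$. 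Theorem~6.4(iii), applied to the invariant measure just constructed, then pins down $\mu=\lambda_Z$, so the QSD satisfies $(6.6)$ at $\lambda=\lambda_Z$ as required.

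The main obstacle will be the identification $\mu=\lambda_Z$ in the ``only if'' step: a priori one only knows $\mu\leq\lambda_Z$, and the explicit ODE formula above exists for every such $\mu$. I expect to close this gap by invoking the sharp convergence criterion of Theorem~6.4(iii), whose proof constructs a $\mu$-invariant measure whose generating function along the characteristic has precisely the form displayed above; uniqueness of ODE solutions with prescribed value $m_{\bf 0}$ forces any $\mu$-invariant probability to be a scalar multiple of that measure, and Theorem~6.4(iii) then singles out $\mu=\lambda_Z$ as the only value for which this measure is finite.
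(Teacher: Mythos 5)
Your proposal follows essentially the same route as the paper: the paper's entire proof consists of quoting Proposition 3.1 of Nair and Pollett (a probability distribution is a quasi-stationary distribution for $P(t)$ if and only if it is $\mu$-invariant for $P(t)$ for some $\mu$) and then invoking Theorem~6.4, and you simply prove that equivalence directly via Chapman--Kolmogorov and the multiplicativity of $S(t)$ before making the same reduction. One remark on your final paragraph: the ``uniqueness of ODE solutions'' step does not actually show that every $\mu$-invariant probability is a scalar multiple of the constructed measure, since the ODE only determines $M$ along the single characteristic $(u,u_2(u),\dots,u_n(u))$ and not the coefficients $m_{\bf i}$ themselves; but the gap you are worried about is illusory, because once $M(1,\dots,1)<\infty$ forces $\rho(1,\dots,1)\le 0$ you have ${\bf q}={\bf 1}$, hence $\lambda_Z=-A(1,\dots,1)=0$ for the conservative $Q$ assumed in the theorem, and $0\le\mu\le\lambda_Z$ then gives $\mu=\lambda_Z$ with no further argument.
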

\par
\begin{proof}
By Proposition $3.1$ of Nair $\&$ Pollett$^{[11]}$, a probability
distribution $(m_{\bf i};{\bf i}\in {\bf Z}_+^n)$ is a
quasi-stationary distribution for $P(t)$ on ${\bf Z}_+^n$ if and
only if for some $\lambda>0$, $(m_{\bf i};{\bf i}\in {\bf Z}_+^n)$
is $\lambda$-invariant for $P(t)$ on ${\bf Z}_+^n$. Thus the
conclusions follow from Theorem~\ref{th6.4}.\hfill$\Box$
\end{proof}

\acks This work is supported by the National Natural Sciences
Foundation of China (,No.11371374, No.11571372), Research Fund for
the Doctoral Program of Higher Education of China
(No.20110162110060), the Graduate degree thesis Innovation
Foundation of Hunan Province (No.CX2011B077).

\end{document}